\newtheorem{theorem}{Theorem}
\newtheorem{proposition}[theorem]{Proposition}
\newtheorem{lemma}[theorem]{Lemma}
\numberwithin{theorem}{section}
\theoremstyle{definition}
\newtheorem{ex}[theorem]{Example}
\newtheorem{corollary}{Corollary}
\theoremstyle{remark}
\newtheorem{remark}[theorem]{Remark}
 \def\b{{\beta}}
\begin{document}

\title[The Dunkl kernel and intertwining operator for dihedral groups]{The Dunkl kernel and intertwining operator for dihedral groups}

\author{Hendrik De Bie}
\address{Department of Electronics and Information Systems \\Faculty of Engineering and Architecture\\Ghent University\\Krijgslaan 281, 9000 Gent\\ Belgium.}
\email{Hendrik.DeBie@UGent.be}

\author{Pan Lian}
\address{ School of Mathematical Sciences -- Tianjin Normal University\\
Binshui West Road 393, Tianjin 300387\\ P.R. China}
\email{pan.lian@outlook.com}

\keywords{Dunkl operators, Intertwining operator, Dunkl transform, dihedral groups}
\subjclass[2010]{33C45,44A20; Secondary 33C50, 33C80.}

\begin{abstract}
Dunkl operators associated with finite reflection groups generate a commutative algebra of differential-difference operators. There exists a unique linear operator called intertwining operator which intertwines between this algebra and the algebra of standard differential operators. There also exists a generalization of the Fourier transform in this context called Dunkl transform.

In this paper, we determine an integral expression for the Dunkl kernel, which is the integral kernel of the Dunkl transform, for all dihedral groups. We also determine an integral expression for the intertwining operator in the case of dihedral groups, based on observations valid for all reflection groups.  As a special case, we recover the result of [Xu, Intertwining operators associated to dihedral groups. Constr. Approx. 2019]. Crucial in our approach is a systematic use of the link between both integral kernels and the simplex in a suitable high dimensional space.

\end{abstract}

\maketitle

\tableofcontents


\section{Introduction}
\setcounter{equation}{0}

Dunkl operators are differential-difference operators that generalize the standard partial derivatives. They are constructed using a finite reflection group and a parameter function on the orbits of this group on its root system. They were initially introduced by Charles Dunkl in \cite{DTAMS}, where he showed that, surprisingly, these operators still commute.
Dunkl operators have found a variety of applications in mathematics and mathematical physics.  They motivate the study of double affine Hecke algebras and Cherednik algebras. They are used in the study of probabilistic processes and are crucial for the integration of quantum many body problems of Calogero-Moser-Sutherland type. They have also made a lasting impact in the study of orthogonal polynomials and special functions in one and several variables.
Apart from the seminal book \cite{DX}, several excellent reviews are currently available on this topic. We refer the reader to e.g. \cite{DJap, rm} and to \cite{Anker} for a most recent state of the art.

The theory of Dunkl operators is further developed using two key ingredients. The first is the {\em Dunkl transform}, which is a generalization of the Fourier transform which now maps coordinate multiplication to the action of Dunkl operators and vice versa. It was introduced in \cite{Ddunkl} and further studied in \cite{deJ}, where the author showed the boundedness and analyticity of the associated integral kernel called Dunkl kernel, and obtained the Plancherel theorem. Various special cases of Paley-Wiener type theorems for the Dunkl transform were obtained in \cite{DJ} and multiplier theorems were investigated in e.g. \cite{Da,Dz}. The Dunkl transform was further generalized in \cite{SKO}, based on observations made in \cite{S2007}.

The second important operator is the {\em intertwining operator} $V$. This operator is a linear and homogeneous isomorphism on the space of polynomials that maps the standard partial derivatives to the Dunkl operators. It is abstractly proven that the intertwining operator can be represented as an integral operator \cite{rm1}. This is important if one wants to extend the intertwining property to larger function spaces. Also, for certain parameter values it behaves singularly, which were determined in \cite{DdO}.

From an abstract point of view, several satisfactory results are known about both the Dunkl transform and the intertwining operator, as mentioned above. However, it has remained a major problem for thirty years to develop explicit formulas of the Dunkl kernel and the intertwining operator for concrete choices of the finite reflection group. We describe and list the known results.
In the one dimensional case, both the Dunkl kernel and the intertwining operator are explicitly known, see \cite{DV, rm}. For the reflection groups of type $A_n$ the action of the intertwining operator on polynomials is determined in \cite{DIA} and given as a highly complicated integral for $n=2$ in \cite{DTAMS}. An integral expression for the intertwining operator for the group $B_2$ was obtained in \cite{D2}.
For dihedral groups, acting in the plane, the current state of the art is as follows. For polynomials, explicit formulas giving $V(z^k \bar z^\ell)$ as complicated sums of monomials are given in \cite{Ddih}, but no integral formula is provided. Using a Laplace transform technique and knowledge on the Poisson kernel obtained in \cite{D}, an explicit and concise expression was obtained for the Dunkl transform in the Laplace domain in \cite{CDL}.
 A series expression for the Dunkl kernel was given in \cite{DDY}. Recently, for a restricted class of functions and restrictions on the parameter function, an integral expression was given for the intertwining operator in \cite{x} using an integral over the simplex.

This lack of explicit formulas has seriously hindered the further development of harmonic analysis for Dunkl operators. The situation is better for the generalized Bessel function, which is a symmetrized version of the Dunkl kernel using the action of the reflection group.  For the root systems $A_{n}$, a complicated integral recurrence formula for the generalized Bessel function  was obtained in \cite{b1}. An explicit expression  is derived in \cite{Ps} for the intertwining operator in the symmetrized setting. For the dihedral case, a series expression for the generalized Bessel function  was  given in \cite{Dn} and  closed formulas were subsequently  obtained for some specific parameters there. An explicit expression in the Laplace domain for the generalized Bessel function was obtained in \cite{CDL}, using the same Laplace transform technique as for Dunkl dihedral kernel. Recently,  a Laplace type expression for the generalized Bessel function for even dihedral groups with one variable specified is given in \cite{DD2}.

The aim of the present paper is to give a complete  description in the case of dihedral groups of the Dunkl kernel, the generalized Bessel function and the intertwining operator. For the Dunkl kernel, we obtain an expression in terms of the  second class of Humbert functions (see Theorem \ref{m2}, \ref{ib111} )  or, alternatively, as an integral over the simplex (Theorem \ref{m2}). This is achieved by inverting the Laplace domain expression obtained in \cite{CDL}.
For the intertwining operator and its inverse, we first determine an integral expression which is new to our knowledge, linking it to the classical Fourier transform and the Dunkl kernel. This is achieved in Theorems \ref{gin} and \ref{iv2}, and the formula is valid for arbitrary reflection groups. Using the explicit formula for the Dunkl kernel in the dihedral case, we can subsequently give explicit expressions for the intertwining operator in that case. As expected, our formulas specialise to those given in \cite{x}.

The paper is organized as follows. In Section 2, we briefly introduce the basics of Dunkl theory and the second class of  Humbert functions. In Section 3,  we give a general integral expression for the intertwining operator and its inverse. Section 4 is devoted to the dihedral case.  We give the explicit formulas for the generalized Bessel function, Dunkl kernel and the intertwining operator. A new proof of Xu's result of \cite{x} can be found at the end of this section.
We end with conclusions and a list of notations used in Section 4.

\section{Preliminaries}
\subsection{Basics of Dunkl theory }
Let $G$ be a finite reflection group with a fixed positive root system $R_{+}$. A multiplicity function $\kappa: R \rightarrow \mathbb{C}$ on the root system $R$ is a $G$-invariant function, i.e. $\kappa(\alpha)=\kappa(h\cdot\alpha)$ for all $h\in G$.
For $\xi \in \mathbb{R}^{m}$, the Dunkl operator $T_{\xi}$ on $\mathbb{R}^{m}$ associated with the group $G$ and the multiplicity function $\kappa(\alpha)$ is defined by
\[T_{\xi}(\kappa)f(x)=\partial_{\xi}f(x)+\sum_{\alpha\in R_{+}}\kappa(\alpha)\langle \alpha,\xi\rangle\frac{f(x)-f(\sigma_{\alpha}x)}{\langle \alpha, x\rangle}, \qquad x\in \mathbb{R}^{m}\]
 where $\langle \cdot, \cdot\rangle$  is the canonical Euclidean  inner product in $\mathbb{R}^{m}$ and $\sigma_{\alpha}x:=x-2\langle x, \alpha\rangle\alpha/||\alpha||^{2}$ is a reflection.
 In the sequel, we write $T_{j}$ in place of $T_{e_{j}}(\kappa)$ where $e_{j}, j=1,\cdots, m$ is a vector of the standard basis of $\mathbb{R}^{m}$. The Dunkl Laplacian  $\Delta_{\kappa}$ is then defined by $\Delta_{\kappa}=\sum_{j=1}^{m}T_{j}^{2}$.

 Let $\mathcal{P}$ denote the polynomials on $\mathbb{R}^{m}$ and $\mathcal{P}_{n}$ the homogeneous polynomials of degree $n$. The Dunkl operators $\{T_{j}\}$ generate a commutative algebra of differential-difference operators on $P$. Each $T_{i}$ is homogeneous of degree $-1$. For $p, q\in \mathcal{P}$, the Fischer bilinear form
 \[ [p,q]_{\kappa}:=(p(T)q)(0)\]
  was introduced in \cite{DX}. Here $p(T)$  is the differential-difference operator obtained by replacing $x_{j}$ in $p$ by $T_{j}$. The Macdonald identity \cite{M2} also has a useful generalization in the Dunkl setting as follows:
   \[ [p,q]_{\kappa}=c_{\kappa}^{-1}\int_{\mathbb{R}^{m}}\left(e^{-\Delta_{\kappa}/2}p(x)\right)\left(e^{-\Delta_{\kappa}/2}q(x)\right)e^{-|x|^{2}/2}\omega_{\kappa}(x)dx \]
where the weight function is \[\omega_{\kappa}(x)=\prod_{\alpha\in R_{+}}|\langle \alpha, x\rangle|^{2\kappa(\alpha)}\] and  $c_{\kappa}$ is the Macdonald-Mehta-Selberg constant, i.e.  \[c_{\kappa}=\int_{\mathbb{R}^{m}}e^{-|x|^{2}/2}\omega_{\kappa}(x)dx.\]

The Dunkl kernel $E_{\kappa}(x, y)$ is the joint eigenfunction of all the $T_{j}$,
\begin{eqnarray}\label{p1}T_{j}E_{\kappa}(x, y)=y_{j}E_{\kappa}(x, y), \qquad j=1,\ldots, m \end{eqnarray}
and satisfies $E_{\kappa}(0, y)=1$. When $\kappa=0$, it reduces to the ordinary exponential function $e^{\langle x, y\rangle}$.
Choosing an orthonomal basis $\{\varphi_{\nu},\nu\in \mathbb{Z}_{+}^{N}\}$ of $\mathcal{P}$ with respect to the Fischer inner product, the Dunkl kernel  can be expressed as
 \begin{eqnarray}\label{ks1} E_{\kappa}(x,y)=\sum_{\nu\in \mathbb{Z}_{+}^{N}}\varphi_{\nu}(x)\varphi_{\nu}(y)\end{eqnarray}
for all $x, y \in \mathbb{R}^{m}$. We further introduce the generalized Bessel function  defined as the symmetric version of the Dunkl kernel  by
\[\mathcal{J}_{\kappa}(x, y)=\frac{1}{|G|}\sum_{g\in G}E_{\kappa}(x, g\cdot y).\]

 The Dunkl transform is defined using the joint eigenfunction  $E_{\kappa}(x, y)$ and the weight function $\omega_{\kappa}(x)$
by
\[\mathcal{F}_{\kappa}f(y):=c_{\kappa}^{-1}\int_{\mathbb{R}^{m}}E_{\kappa}(-ix, y)f(x)\omega_{\kappa}(x)dx \quad (y\in \mathbb{R}^{m}). \]
When $\kappa=0$, the Dunkl transform reduces to the ordinary Fourier transform $\mathcal{F}$, i.e.
\[\mathcal{F}f(y):=\frac{1}{(2\pi)^{m/2}}\int_{\mathbb{R}^{m}}e^{-i\langle x, y\rangle}f(x)dx.  \]
The definition of Dunkl transform is motivated by the following proposition (Proposition 7.7.2 in \cite{DX})  which will also be used in the following section.
\begin{proposition}\label{h2}  Let $p$ be a polynomial on $\mathbb{R}^{m}$ and $v(y)=\sum_{j=1}^{m}y_{j}^{2}$ for $y\in \mathbb{C}^{m}$, then
\[c_{\kappa}^{-1}\int_{\mathbb{R}^{m}}  \left[e^{-\Delta_{\kappa}/2}p(x)\right]E_{\kappa}(x, y)e^{-|x|^{2}/2}\omega_{\kappa}(x)dx=e^{v(y)/2}p(y).  \]
\end{proposition}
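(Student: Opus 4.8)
The plan is to combine the Macdonald identity, the series expansion \eqref{ks1} of the Dunkl kernel, and the reproducing property of the Fischer inner product. I would keep $p$ arbitrary throughout, since everything in sight is linear in $p$ and no reduction to a basis of $\mathcal{P}$ is needed. The first ingredient I would establish is the adjointness of coordinate multiplication and the Dunkl operators under the Fischer form: starting from $[p,q]_\kappa=(p(T)q)(0)$ and using that the $T_j$ commute, one has $(x_jp)(T)=T_j\,p(T)=p(T)\,T_j$, so that $[x_jp,q]_\kappa=(p(T)T_jq)(0)=[p,T_jq]_\kappa$. Summing over $j$ with $v=\sum_j x_j^2$ and $\Delta_\kappa=\sum_j T_j^2$ gives $[vp,q]_\kappa=[p,\Delta_\kappa q]_\kappa$, and iterating yields $[p,\Delta_\kappa^k q]_\kappa=[v^k p,q]_\kappa$ for every $k$.

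The central computation is to evaluate the integral against a single basis polynomial $\varphi_\nu$. Here I would set $g:=e^{\Delta_\kappa/2}\varphi_\nu$, which is a genuine polynomial because $\Delta_\kappa$ lowers degree by two, so the exponential series terminates. Since $e^{-\Delta_\kappa/2}g=\varphi_\nu$, applying the Macdonald identity to the pair $(p,g)$ gives
\[ c_\kappa^{-1}\int_{\mathbb{R}^m}\bigl(e^{-\Delta_\kappa/2}p\bigr)(x)\,\varphi_\nu(x)\,e^{-|x|^2/2}\omega_\kappa(x)\,dx=[p,e^{\Delta_\kappa/2}\varphi_\nu]_\kappa. \]
Expanding the exponential and invoking the iterated adjointness relation turns the right-hand side into $\sum_k (2^k k!)^{-1}[v^k p,\varphi_\nu]_\kappa=[e^{v/2}p,\varphi_\nu]_\kappa$, understood as the pairing of the entire function $e^{v/2}p$ against $\varphi_\nu$; this is a finite sum, since the Fischer form pairs only homogeneous parts of equal degree.

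To finish, I would substitute the expansion $E_\kappa(x,y)=\sum_\nu\varphi_\nu(x)\varphi_\nu(y)$ into the integral, interchange summation and integration, and apply the previous step term by term, obtaining
\[ c_\kappa^{-1}\int_{\mathbb{R}^m}\bigl(e^{-\Delta_\kappa/2}p\bigr)(x)\,E_\kappa(x,y)\,e^{-|x|^2/2}\omega_\kappa(x)\,dx=\sum_\nu[e^{v/2}p,\varphi_\nu]_\kappa\,\varphi_\nu(y). \]
The right-hand side is precisely the expansion of the function $e^{v/2}p$ in the orthonormal basis $\{\varphi_\nu\}$, evaluated at $y$; by the reproducing property of the Fischer form it equals $(e^{v/2}p)(y)=e^{v(y)/2}p(y)$, which is the claim.

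The main obstacle I anticipate is analytic rather than algebraic, namely justifying the interchange of the sum over $\nu$ with the integration. I would control this using the known bounds on $E_\kappa(x,y)$ and its homogeneous components from \cite{deJ}, which together with the Gaussian factor $e^{-|x|^2/2}\omega_\kappa(x)$ ensure absolute convergence. The degree-by-degree structure of the Fischer pairing then guarantees that only finitely many terms contribute at each homogeneous degree, so both the termwise evaluation and the final reproducing-kernel reconstruction are legitimate.
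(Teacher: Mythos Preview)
The paper does not supply its own proof of this proposition; it is quoted verbatim as Proposition~7.7.2 of \cite{DX} and used as a black box. Your argument is correct and is in essence the standard proof: the Macdonald identity converts the integral against $\varphi_\nu$ into the Fischer pairing $[p,e^{\Delta_\kappa/2}\varphi_\nu]_\kappa$, the adjointness $[x_jp,q]_\kappa=[p,T_jq]_\kappa$ (immediate from the commutativity of the $T_j$) turns this into $[e^{v/2}p,\varphi_\nu]_\kappa$, and summing over $\nu$ recovers $e^{v/2}p$ via the reproducing property of the orthonormal basis. The analytic justification you outline is adequate; the bound $|E_{\kappa,n}(x,y)|\le (|x|\,|y|)^n/n!$ from \cite{deJ}, together with the Gaussian weight, supplies the integrable majorant needed for the interchange.
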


There exists an unique linear and homogenous isomorphism on $\mathcal{P}$ which satisfies $V_{\kappa}1=1$  and intertwines  the ordinary partial differential operators and the Dunkl operators,
\begin{eqnarray} \label{it1}T_{j}V_{\kappa}=V_{\kappa}\partial_{j}, \qquad j=1, 2,\ldots, m.\end{eqnarray}
The operator $V_{\kappa}$ is called the Dunkl intertwining operator in the literature. The explicit representation of this operator is only known so far for some special cases, e.g. the group $\mathbb{Z}_{2}$ and root system $ A_{2}, B_{2}$, see \cite{D1, DX} and \cite{Anker} for a recent review. We  list the rank one case, which is frequently used in this paper.
\begin{ex} For $k>0$, the intertwining operator in the rank one case is
\begin{eqnarray} \label{kk1}
 V_{k}(p)(x)=\frac{\Gamma(k+1/2)}{\Gamma(1/2)\Gamma(k)}\int_{-1}^{1}p(xt)(1-t)^{k-1}(1+t)^{k}dt
 .\end{eqnarray}
\end{ex}

 Using the intertwining operator, the Dunkl kernel is expressed as
\[E_{\kappa}(x, y)=V_{\kappa}\left(e^{\langle \cdot, y\rangle}\right)(x).\]
Throughout this paper, we only consider real multiplicity functions with $\kappa\ge 0$.
\subsection{Humbert function of several variables}
There are many ways to define  hypergeometric functions of several variables. In this subsection, we  introduce the  second  Humbert function of $m$ variables
\[\Phi_{2}^{(m)}(\beta_{1},
\ldots, \beta_{m}; \gamma; x_{1}, \ldots, x_{m}):=\sum_{j_{1},\ldots, j_{m}\ge 0}\frac{(\beta_{1})_{j_{1}}\cdots(\beta_{m})_{j_{m}}}{(\gamma)_{j_{1}+\cdots+j_{m}}}\frac{x_{1}^{j_{1}}}{j_{1}!}\cdots \frac{x_{m}^{j_{m}}}{j_{m}!}  \]
which is one of  the confluent Lauricella hypergeometric series, see \cite{EH} (Chapter 2, 2.1.1.2, page 42).

When  $\gamma-\sum_{j=1}^{m}\beta_{j}$  and each $\beta_{j}$,  $j=1, 2, \ldots, m$ is a positive number, the hypergeometric function $\Phi_{2}^{(m)}$ admits the following integral representation
\begin{eqnarray}\label{pi1}&&\Phi_{2}^{(m)}(\beta_{1},
\ldots, \beta_{m}; \gamma; x_{1}, \ldots, x_{m})\\&=&C_{\beta}^{ (\gamma)}
\int_{T^{m}} e^{\sum_{j=1}^{m}x_{j}t_{j} } \left(1-\sum_{j=1}^{m}t_{j}\right)^{\gamma-\sum_{j=1}^{m}\beta_{j}-1}\prod_{j=1}^{m}t_{j}^{\beta_{j}-1}dt_{1}\ldots dt_{m}\nonumber
\end{eqnarray}
where \[C_{\beta}^{ (\gamma)}=\frac{\Gamma{(\gamma)}}{\Gamma(\gamma-\sum_{j=1}^{m}\beta_{j})\prod_{j=1}^{m}\Gamma(\beta_{j})}\] and  $T^{m}$ is the open unit simplex in $\mathbb{R}^{m}$ given by
\begin{eqnarray*}T^{m}=\left\{(t_{1},\ldots, t_{m}): t_{j}>0, j=1,\ldots, m, \sum_{j=1}^{m}t_{j}<1\right\}.\end{eqnarray*}
See \cite{cw, H} for more details on this integral expression.

Moreover,   Section 4.24, formula (5) in \cite{E2} shows that the Laplace transform in the variable $t$ of  $\Phi_{2}^{(m)}$  is given by
\begin{eqnarray}\label{l1} &&\mathcal{L}(t^{\gamma-1}\Phi_{2}^{(m)}(\beta_{1},\ldots,\beta_{m}; \gamma;\lambda_{1}t,\ldots,\lambda_{m}t) )\\&=&\frac{\Gamma(\gamma)}{s^{\gamma}}\left(1-\frac{\lambda_{1}}{s}\right)^{-\beta_{1}}\cdots\left(1-\frac{\lambda_{m}}{s}\right)^{-\beta_{m}} \nonumber
\end{eqnarray}
with ${\rm Re}\, \gamma, {\rm Re}\,s>0,  {\rm Re}\lambda_{j}, j=1,\cdots, m .$
\section{New formulas for the intertwining operator and its inverse
}\label{se1}
In this section, we give an integral expression of $V_{\kappa}$ in terms of the Dunkl kernel which will be used to derive the explicit expression for the dihedral groups in subsequent sections.
We first formulate the following  lemma using the  ordinary Fourier transform.  Note that the notation $e^{-\Delta_{(y)}/2} p(iy)$  used in the following means acting with the operator $e^{-\Delta_{(y)}/2}$ on the complex valued polynomial $p(iy)$.
\begin{lemma}  For any $y, z \in \mathbb{R}^{m}$, let $p(z)$ be a polynomial. Then we have
 \begin{eqnarray} \label{ep1}
 e^{-\Delta_{(y)}/2}p(-iy)&=&\frac{1}{(2\pi)^{m/2}}\int_{\mathbb{R}^{m}} e^{-i\langle x, y\rangle}e^{-|x|^{2}/2}p(x)e^{|y|^{2}/2}dx, \nonumber\\
 e^{-\Delta_{(y)}/2}E_{\kappa}(iy,z)&=&\frac{1}{(2\pi)^{m/2}}\int_{\mathbb{R}^{m}} e^{-i\langle x, y\rangle}e^{-|x|^{2}/2}E_{\kappa}(-x,z)e^{|y|^{2}/2}dx,
\end{eqnarray}
where $\Delta_{(y)}$ is the usual Laplace operator $\Delta=\sum_{j=1}^{m}\partial_{j}^{2}$ acting on the variable $y$.
\end{lemma}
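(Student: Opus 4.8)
The plan is to recognize that the right-hand sides of \eqref{ep1} are, apart from the factor $e^{|y|^2/2}$, nothing but the ordinary Fourier transform $\mathcal{F}$ of the Gaussian-damped functions $x\mapsto p(x)e^{-|x|^2/2}$ and $x\mapsto E_{\kappa}(-x,z)e^{-|x|^2/2}$. Moreover, the second identity is exactly the first one applied to the function $\tilde p(x):=E_{\kappa}(-x,z)$ (for $z$ fixed), since then $\tilde p(-iy)=E_{\kappa}(iy,z)$ and $\tilde p(x)=E_{\kappa}(-x,z)$. So I would first establish the polynomial identity for all $p$, and afterwards upgrade to the Dunkl kernel by expanding $E_{\kappa}(-x,z)$ in the series \eqref{ks1} and applying the polynomial case termwise.

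For the polynomial identity the cleanest route is to test it on the exponentials $p(x)=e^{\langle a,x\rangle}$ with a parameter $a\in\mathbb{C}^{m}$ and then extract monomials. On the integral side I would complete the square: since $-\tfrac12|x|^2+\langle a-iy,x\rangle=-\tfrac12|x-(a-iy)|^2+\tfrac12\langle a-iy,a-iy\rangle$, the Gaussian normalization $\mathcal{F}e^{-|\cdot|^2/2}=e^{-|\cdot|^2/2}$ together with analytic continuation in the (complex) shift gives that the right-hand side equals $e^{|y|^2/2}e^{\frac12(|a|^2-2i\langle a,y\rangle-|y|^2)}=e^{|a|^2/2}e^{-i\langle a,y\rangle}$. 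On the left-hand side, $p(-iy)=e^{-i\langle a,y\rangle}$ is an eigenfunction of $\Delta_{(y)}$ with eigenvalue $-|a|^2$, so $e^{-\Delta_{(y)}/2}e^{-i\langle a,y\rangle}=e^{|a|^2/2}e^{-i\langle a,y\rangle}$, matching exactly. Both sides being entire in $a$, comparing the Taylor coefficients $\partial_a^{\nu}|_{a=0}$ turns $e^{\langle a,x\rangle}$ into the monomial $x^{\nu}$ and yields the first identity for every monomial, hence for every polynomial by linearity. Conceptually the same fact can be read off from the conjugation relations $e^{|y|^2/2}\partial_{y_j}e^{-|y|^2/2}=\partial_{y_j}-y_j$ and $e^{-\Delta_{(y)}/2}y_je^{\Delta_{(y)}/2}=y_j-\partial_{y_j}$, which I would record as the underlying reason why the Fourier multiplier $p(i\partial_y)$ applied to the Gaussian reproduces $e^{-\Delta_{(y)}/2}$.

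It then remains to pass from polynomials to $E_{\kappa}(-\,\cdot\,,z)$. Using \eqref{ks1} I would write $E_{\kappa}(-x,z)=\sum_{\nu}\varphi_{\nu}(-x)\varphi_{\nu}(z)$, apply the already-proven identity to each polynomial $\varphi_{\nu}(-\,\cdot\,)$, multiply by $\varphi_{\nu}(z)$ and sum. The main obstacle is justifying the two interchanges of this summation: with the integral on the right, where I would dominate using the Gaussian factor $e^{-|x|^2/2}$ together with the analyticity and exponential growth bounds for $E_{\kappa}$ established in \cite{deJ}, and then invoke dominated convergence; and with the heat operator $e^{-\Delta_{(y)}/2}$ on the left, which acts as a finite-order differential operator on each homogeneous component, so the rearrangement is legitimate once the resulting series is seen to converge locally uniformly to the analytic right-hand side. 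Modulo these standard but necessary convergence checks, summing the termwise identities produces the second line of \eqref{ep1}.
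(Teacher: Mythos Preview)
Your argument is correct. For the first identity you compute directly via the generating function $p(x)=e^{\langle a,x\rangle}$ and complete the square; the paper instead takes Proposition~\ref{h2} with $\kappa=0$ as a black box, substitutes $y\mapsto -iy$ to get $\int\bigl(e^{-\Delta_{(y)}/2}p(-iy)\bigr)e^{i\langle x,y\rangle}e^{-|y|^{2}/2}\,dy=(2\pi)^{m/2}e^{-|x|^{2}/2}p(x)$, and then applies the ordinary Fourier transform $\mathcal{F}$ to both sides to invert. Your route is more self-contained; the paper's is a one-liner given the cited result.

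For the second identity both arguments reduce to the polynomial case plus dominated convergence, but the paper obtains the dominating bound more cleanly. Rather than expanding in the orthonormal basis \eqref{ks1} and controlling partial sums, it commutes the heat operator through the intertwining operator, writing $e^{-\Delta_{(y)}/2}E_{\kappa}(iy,z)=V_{\kappa}\bigl(e^{-\Delta_{(y)}/2}e^{i\langle y,\cdot\rangle}\bigr)(z)=V_{\kappa}\bigl(e^{|\cdot|^{2}/2}e^{i\langle y,\cdot\rangle}\bigr)(z)$, and then invokes R\"{o}sler's Bochner-type positivity \cite{rm1} to bound this by $e^{|z|^{2}/2}$ uniformly in $y$. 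This single estimate simultaneously justifies the action of $e^{-\Delta_{(y)}/2}$ on the series and the domination needed on the integral side; your appeal to the de~Jeu growth bounds achieves the same end with a little more bookkeeping.
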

\begin{proof} Let $\kappa=0$ in Proposition \ref{h2}, then one has
\[\int_{\mathbb{R}^{m}}\biggl(e^{-\Delta_{(y)}/2}p(-iy)\biggr)e^{i\langle x, y\rangle} e^{-|y|^{2}/2}dy=(2\pi)^{m/2}e^{-|x|^{2}/2}p(x).\]
Acting with the ordinary Fourier transform $\mathcal{F}$ on both sides leads to the first identity in the present theorem,
\[\biggl(e^{-\Delta_{(y)}/2}p(-iy)\biggr)e^{-|y|^{2}/2}=\frac{1}{(2\pi)^{m/2}}\int_{\mathbb{R}^{m}} e^{-i\langle x, y\rangle}e^{-|x|^{2}/2}p(x)dx.  \]

For the second part, we first give the following estimation,
\begin{eqnarray} \label{h1} \left|e^{-\Delta_{(y)}/2}E_{\kappa}(iy,z)\right|&=&\left|V_{\kappa}\left(e^{-\Delta_{(y)}/2}e^{i\langle y, \cdot\rangle}\right)(z)\right|\nonumber\\&=&\left|V_{\kappa}\left(e^{|\cdot|^{2}/2}e^{i\langle y, \cdot\rangle}\right)(z)\right|\nonumber\\&\le& e^{|z|^{2}/2}
\end{eqnarray}
where the last inequality is by $\left|e^{|x|^{2}/2}e^{i\langle y, x\rangle}\right|\le e^{|x|^{2}/2}$ and the Bochner-type representation of the intertwining operator  proved in \cite{rm1}. Using the above estimation (\ref{h1}) and the dominated convergence theorem, the second equality in the theorem is obtained by replacing $p$ with the Dunkl kernel in the first equality.
\end{proof}
By the Fischer inner product, we  give a general formula for the intertwining operator,
which reveals the relationship between the Dunkl kernel and the intertwining operator. In principle, once the Dunkl kernel is known, our theorem yields the integral expression for the intertwining operator.
\begin{theorem}\label{gin} Let $p$ be a polynomial and  $K(iy,z):=e^{-\Delta_{(y)}/2}E_{\kappa}(iy,z)$,
   then for any $z\in \mathbb{R}^{m}$, the intertwining operator $V_{\kappa}$ satisfies
   \begin{eqnarray}\label{int1}V_{\kappa}(p)(z)&=&\frac{1}{(2\pi)^{m/2}} \int_{\mathbb{R}^{m}}K(iy, z)\mathcal{F}\left(p(\cdot)e^{-|\cdot|^{2}/2}\right)(y) dy \nonumber\\
   &=&\frac{1}{(2\pi)^{m/2}}\int_{\mathbb{R}^{m} } \mathcal{F}\left(e^{-|\cdot|^{2}/2}E_{\kappa}(\cdot, -z)\right)(y)\mathcal{F}\left(e^{-|\cdot|^{2}/2} p(\cdot)\right)(y)e^{|y|^{2}/2}dy.
   \end{eqnarray}
\end{theorem}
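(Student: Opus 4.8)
The plan is to reduce the statement to a purely algebraic identity expressing $V_\kappa$ through the \emph{classical} ($\kappa=0$) Fischer inner product $[f,g]_0:=(f(\partial)g)(0)$, and then to realise that identity as the stated integral using the preceding lemma together with the Macdonald formula. Concretely, I would first prove that for every polynomial $p$ and every $z\in\mR^m$,
\[ V_\kappa(p)(z)=[\,E_\kappa(\cdot,z),\,p\,]_0 . \]
Two ingredients give this. First, the intertwining relations \eqref{it1} yield $p(T)=V_\kappa\,p(\partial)\,V_\kappa^{-1}$ for every polynomial $p$ (by induction on monomials, using that the $T_j$ commute); since $V_\kappa$ preserves degree and fixes constants we have $(V_\kappa g)(0)=g(0)$, so from $[a,q]_\kappa=(a(T)q)(0)$ one obtains $[a,q]_\kappa=[a,V_\kappa^{-1}q]_0$, equivalently $[a,V_\kappa q]_\kappa=[a,q]_0$. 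Second, the expansion \eqref{ks1} and the orthonormality $[\varphi_\mu,\varphi_\nu]_\kappa=\delta_{\mu\nu}$ give the reproducing property $\sum_\nu\varphi_\nu(z)[\varphi_\nu,h]_\kappa=h(z)$. Combining the two, $[E_\kappa(\cdot,z),p]_0=\sum_\nu\varphi_\nu(z)[\varphi_\nu,p]_0=\sum_\nu\varphi_\nu(z)[\varphi_\nu,V_\kappa p]_\kappa=(V_\kappa p)(z)$, where every sum is finite because $p$ has finite degree.

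Next I would connect $[E_\kappa(\cdot,z),p]_0$ to the integral. Starting from the first line of \eqref{int1}, the lemma gives $\mathcal{F}(p\,e^{-|\cdot|^2/2})(y)=e^{-|y|^2/2}\,e^{-\Delta_{(y)}/2}p(-iy)$ and $K(iy,z)=e^{-\Delta_{(y)}/2}E_\kappa(iy,z)$, so the right-hand side becomes
\[ \frac{1}{(2\pi)^{m/2}}\int_{\mR^m}\bigl[e^{-\Delta_{(y)}/2}E_\kappa(iy,z)\bigr]\bigl[e^{-\Delta_{(y)}/2}p(-iy)\bigr]e^{-|y|^2/2}\,dy , \]
which is precisely the $\kappa=0$ Macdonald integral for the pair $\bigl(E_\kappa(i\cdot,z),\,p(-i\cdot)\bigr)$. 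Inserting \eqref{ks1} and using the homogeneity $\varphi_\nu(iy)=i^{d_\nu}\varphi_\nu(y)$, where $d_\nu$ is the degree of $\varphi_\nu$, the polynomial Macdonald identity evaluates each term as $[\varphi_\nu(i\cdot),p(-i\cdot)]_0=i^{d_\nu}(-i)^{d_\nu}[\varphi_\nu,p]_0=[\varphi_\nu,p]_0$, the factors of $i$ cancelling because the Fischer form pairs only equal degrees. Summing over $\nu$ recovers $\sum_\nu\varphi_\nu(z)[\varphi_\nu,p]_0=(V_\kappa p)(z)$ by the identity of the previous paragraph. The equivalence of the two displayed expressions in \eqref{int1} is then immediate: the second identity of the lemma together with the scaling symmetry $E_\kappa(-x,z)=E_\kappa(x,-z)$ gives $K(iy,z)=e^{|y|^2/2}\,\mathcal{F}\bigl(e^{-|\cdot|^2/2}E_\kappa(\cdot,-z)\bigr)(y)$, and substituting this into the first line produces the second.

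The main obstacle I anticipate is analytic rather than algebraic: the Macdonald identity as recalled above is stated for polynomials, whereas here one slot is the entire function $E_\kappa(i\cdot,z)$, so the interchange of the summation over $\nu$ with the integration must be justified. I would control this through the bound $|K(iy,z)|=\bigl|e^{-\Delta_{(y)}/2}E_\kappa(iy,z)\bigr|\le e^{|z|^2/2}$ established in the lemma, which places $K(i\cdot,z)$ in $L^2(\mR^m,e^{-|y|^2/2}dy)$. In this weighted space the functions $e^{-\Delta/2}\varphi_\nu(i\cdot)$ form an orthogonal system realising the Macdonald orthogonality across degrees, the Taylor expansion of $K(i\cdot,z)$ converges, and pairing against the \emph{fixed} finite-degree factor $e^{-\Delta/2}p(-i\cdot)$ selects only the finitely many matching degrees; a dominated-convergence argument (the tail being paired against a Schwartz function) then legitimises the term-by-term evaluation. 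This is the only step where genuine convergence, rather than the formal bilinear algebra of the Fischer form, has to be handled.
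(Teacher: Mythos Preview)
Your argument is correct and rests on the same core idea as the paper: recognise the right-hand side of \eqref{int1} as the $\kappa=0$ Macdonald integral for the classical Fischer pairing $[E_\kappa(i\cdot,z),p(-i\cdot)]_0$, invoke the preceding lemma to pass between the Fourier form and the $e^{-\Delta/2}$ form, and derive the second line from the first via the lemma's integral expression for $K(iy,z)$.

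The only organisational difference is how the Fischer identity is reached. You first establish $V_\kappa(p)(z)=[E_\kappa(\cdot,z),p]_0$ algebraically, via the relation $[a,V_\kappa q]_\kappa=[a,q]_0$ and the orthonormal expansion \eqref{ks1}. The paper instead starts from the classical reproducing identity $p(z)=[p(-iy),e^{i\langle y,z\rangle}]_0$, writes it as a Macdonald integral, and applies $V_\kappa$ in the $z$-variable directly under the integral sign (using that $V_\kappa$ acts only on $z$ and commutes with $e^{-\Delta_{(y)}/2}$). This is a shorter path to the same intermediate step and avoids invoking the $\kappa$-orthonormal basis; on the other hand, your route makes the algebraic content $V_\kappa(p)(z)=[E_\kappa(\cdot,z),p]_0$ explicit and handles the convergence issue (pairing an entire function against a polynomial in the Macdonald integral) more carefully than the paper does.
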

\begin{proof} It is well known that the exponential function $e^{\langle y, z\rangle }$ is the reproducing kernel of the polynomials with respect to the classical Fischer inner product,  denoted by  $[\cdot , \cdot]_{0}$, corresponding to the multiplicity function $\kappa(\alpha)=0$.
More precisely, with the  Macdonald identity,  we have
\begin{eqnarray*} p(z)&=&\left[p(y),e^{\langle y, z\rangle}\right]_{0}\\&=&\frac{1}{(2\pi)^{m/2}}\int_{\mathbb{R}^{m}}\biggl(e^{-\Delta_{(y)}/2}p(y)\biggr)\biggl(e^{-\Delta_{(y)}/2}e^{\langle y, z\rangle}\biggr)e^{-|y|^{2}/2}dy. \end{eqnarray*}
By complexification, we have
\begin{eqnarray} \label{ks3}p(z)&=&\left[p(-iy), e^{i\langle y, z\rangle}\right]_{0}\nonumber\\&=&\frac{1}{(2\pi)^{m/2}}\int_{\mathbb{R}^{m}}\biggl(e^{-\Delta_{(y)}/2}p(-iy)\biggr)\biggl(e^{-\Delta_{(y)}/2}e^{i\langle y, z\rangle}\biggr)e^{-|y|^{2}/2}dy. \end{eqnarray}
Applying $V_{\kappa}$ on both sides of (\ref{ks3}) with respect to $z$, it follows that
 \begin{eqnarray} \label{ks4}&&V_{\kappa}(p)(z)\nonumber\\&=&\frac{1}{(2\pi)^{m/2}}\int_{\mathbb{R}^{m}}\biggl(e^{-\Delta_{(y)}/2}p(-iy)\biggr)V_{\kappa}\biggl(e^{-\Delta_{(y)}/2}e^{i\langle y, \cdot\rangle}\biggr)(z)e^{-|y|^{2}/2}dy\nonumber\\&=&\frac{1}{(2\pi)^{m/2}}\int_{\mathbb{R}^{m}}\biggl(e^{-\Delta_{(y)}/2}p(-iy)\biggr)\biggl(e^{-\Delta_{(y)}/2}E_{\kappa}(iy,z)\biggr)e^{-|y|^{2}/2}dy .\end{eqnarray}
 Now, putting the integral expression (\ref{ep1}) in (\ref{ks4}),  leads to
 \begin{eqnarray*} V_{\kappa}(p)(z)=\frac{1}{(2\pi)^{m/2}} \int_{\mathbb{R}^{m}}K(iy, z)\mathcal{F}\left(e^{-|x|^{2}/2}p(x)\right) dy.
 \end{eqnarray*}
The second identity in (\ref{int1}) follows from the expression (\ref{ep1}).
\end{proof}

\begin{remark} It is known that for any polynomial $p(x)$, the function $ \mathcal{F}\left(e^{-|x|^{2}/2}p(x)\right)$  is in the Schwarz space. Furthermore, by the  estimation
\[|K(iy, z)|=\left|V_{\kappa}\left(e^{-\Delta_{(y)}/2}e^{i\langle y, \cdot\rangle}\right)(z)\right|\le e^{|z|^{2}/2},\]
the integral in (\ref{int1}) makes sense.
\end{remark}
\begin{remark} At the moment, it is not clear to the authors if the above expression (\ref{gin}) can be rewritten  as a R\"osler type integral \cite{rm1}
\[V_{\kappa}(p)(z)=\int_{\mathbb{R}^{m}}p(x)d\mu_{z}(x).  \]
\end{remark}

For a polynomial which is invariant under $G$, i.e. $p(g\cdot z)=p(z),$ for all $g
\in G$, the reproducing kernel under the ordinary Fischer inner product is given by \[\frac{1}{|G|}\sum_{g\in G} e^{\langle x,g\cdot y\rangle}.\]  Therefore,  the intertwining operator acting on  invariant polynomials could be obtained as
\[ V_{\kappa}(p)(z)=V_{\kappa}\left(\left[p(\cdot),\frac{1}{|G|}\sum_{g\in G} e^{\langle \cdot,g\cdot y\rangle} \right]_{0}\right)(z).\]
Following the proof of Theorem \ref{gin}, we obtain the expression of the intertwining operator on the invariant polynomials by replacing the Dunkl kernel with  the generalized Bessel function.
\begin{corollary}\label{cor} Let $p$ be a $G$-invariant polynomial.  Then for any $z\in \mathbb{R}^{m}$, the intertwining operator $V_{\kappa}$ satisfies
   \begin{eqnarray*}V_{\kappa}(p)(z)
   &=&\frac{1}{(2\pi)^{m/2}}\int_{\mathbb{R}^{m} } \mathcal{F}\left(e^{-|\cdot|^{2}/2}\mathcal{J}_{\kappa}(\cdot, -z)\right)(y)\mathcal{F}\left(e^{-|\cdot|^{2}/2} p(\cdot)\right)(y)e^{|y|^{2}/2}dy.
   \end{eqnarray*}

\end{corollary}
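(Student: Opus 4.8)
The plan is to mimic the proof of Theorem \ref{gin} verbatim, replacing the exponential reproducing kernel $e^{\langle y,z\rangle}$ by its $G$-average. The first step is to verify that, for a $G$-invariant polynomial $p$, the symmetrized exponential $\frac{1}{|G|}\sum_{g\in G}e^{\langle y,g\cdot z\rangle}$ reproduces $p$ under the classical Fischer inner product $[\cdot,\cdot]_{0}$. This follows immediately from the linearity of $[\cdot,\cdot]_{0}$ together with the reproducing property $[p(\cdot),e^{\langle\cdot,g\cdot z\rangle}]_{0}=p(g\cdot z)$: averaging over $g$ and using $p(g\cdot z)=p(z)$ gives $\bigl[p(\cdot),\frac{1}{|G|}\sum_{g}e^{\langle\cdot,g\cdot z\rangle}\bigr]_{0}=p(z)$, exactly as announced in the paragraph preceding the statement.

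Next I would run the same chain of identities as in the proof of Theorem \ref{gin}. Applying the Macdonald identity to the symmetrized reproducing formula and complexifying $z\mapsto-iy$ in the first slot yields
\[p(z)=\frac{1}{(2\pi)^{m/2}}\int_{\mathbb{R}^m}\Bigl(e^{-\Delta_{(y)}/2}p(-iy)\Bigr)\Bigl(e^{-\Delta_{(y)}/2}\tfrac{1}{|G|}\sum_{g\in G}e^{i\langle y,g\cdot z\rangle}\Bigr)e^{-|y|^2/2}\,dy.\]
Applying $V_{\kappa}$ in the variable $z$ and moving it inside both the sum and the integral is the heart of the computation. Since $V_{\kappa}$ acts only on $z$ it commutes with $e^{-\Delta_{(y)}/2}$, and by linearity together with $V_{\kappa}(e^{\langle\cdot,w\rangle})(z)=E_{\kappa}(z,w)$ one obtains $V_{\kappa}\bigl(\frac{1}{|G|}\sum_{g}e^{i\langle y,g\cdot\cdot\rangle}\bigr)(z)=\frac{1}{|G|}\sum_{g}E_{\kappa}(z,ig^{-1}\cdot y)$, after writing $\langle y,g\cdot w\rangle=\langle g^{-1}y,w\rangle$. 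Reindexing $g\mapsto g^{-1}$ collapses this average into the generalized Bessel function $\mathcal{J}_{\kappa}(z,iy)$, which is precisely the symmetrized substitute for the kernel $E_{\kappa}(iy,z)$ appearing in Theorem \ref{gin}.

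Finally, I would invoke the symmetrized form of the Lemma underlying (\ref{ep1}): applying that identity to each summand $E_{\kappa}(iy,g\cdot z)$ and averaging rewrites $e^{-\Delta_{(y)}/2}\mathcal{J}_{\kappa}(iy,z)$ as $\mathcal{F}\bigl(e^{-|\cdot|^2/2}\mathcal{J}_{\kappa}(\cdot,-z)\bigr)(y)\,e^{|y|^2/2}$, where I use the scalar and symmetry relations $\mathcal{J}_{\kappa}(-x,z)=\mathcal{J}_{\kappa}(x,-z)=\mathcal{J}_{\kappa}(-z,x)$ (each inherited termwise from $E_{\kappa}$). Substituting this together with $e^{-\Delta_{(y)}/2}p(-iy)=\mathcal{F}\bigl(e^{-|\cdot|^2/2}p(\cdot)\bigr)(y)\,e^{|y|^2/2}$ into the integral delivers the claimed formula.

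The main obstacle is justifying the interchange of $V_{\kappa}$ with the integral, but this is handled exactly as in Theorem \ref{gin} and its following remark. The bound $|e^{-\Delta_{(y)}/2}\mathcal{J}_{\kappa}(iy,z)|\le e^{|z|^2/2}$ follows from estimate (\ref{h1}) applied to each of the finitely many terms $E_{\kappa}(iy,g\cdot z)$, since a $G$-average of quantities bounded by $e^{|z|^2/2}$ is again bounded by $e^{|z|^2/2}$; hence dominated convergence applies and every step above is legitimate. No genuinely new analytic input beyond Theorem \ref{gin} is required: the only substantive points are the symmetrization of the reproducing kernel and the attendant reindexing of the sum over $G$.
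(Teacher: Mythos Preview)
Your proposal is correct and follows precisely the approach the paper indicates: the paragraph preceding the corollary states that one should use the symmetrized reproducing kernel $\frac{1}{|G|}\sum_{g\in G}e^{\langle x,g\cdot y\rangle}$ for $G$-invariant polynomials and then ``follow the proof of Theorem \ref{gin}'' with the Dunkl kernel replaced by the generalized Bessel function, which is exactly what you do. Your added details on the reindexing over $G$, the termwise application of the bound (\ref{h1}), and the symmetry relations for $\mathcal{J}_{\kappa}$ are all correct and make explicit what the paper leaves implicit.
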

In the following, we show independently that the integral expression given in (\ref{int1}) actually intertwines the usual partial derivatives and the Dunkl operator, i.e. the relations (\ref{it1}). We use the notation $[A, B]:=AB-BA$ for the commutator of  two operators $A, B$. The following lemma from \cite{rm} will help to verify these relations.
\begin{lemma}\cite{rm} \label{rle}
For $j=1,\ldots, m$, we have
\begin{eqnarray*}
&&[y_{j}, \Delta_{\kappa}/2]=-T_{j};\\
&&\left[y_{j}, e^{-\Delta_{\kappa}/2}\right]=T_{j}e^{-\Delta_{\kappa}/2}.
\end{eqnarray*}
\end{lemma}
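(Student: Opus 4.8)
The plan is to reduce both identities to the fundamental commutation relation between the Dunkl operators and the coordinate multiplications, namely
\[
[T_i, y_j] = \delta_{ij}\,\mathrm{Id} + S_{ij}, \qquad S_{ij} := \sum_{\alpha\in R_{+}} 2\kappa(\alpha)\frac{\langle\alpha,e_i\rangle\langle\alpha,e_j\rangle}{\|\alpha\|^{2}}\sigma_{\alpha},
\]
which holds on $\mathcal{P}$ and is proved in \cite{DX}; it is the Dunkl analogue of $[\partial_i, y_j] = \delta_{ij}$. Throughout, $y_j$ denotes multiplication by the $j$-th coordinate and $T_{\alpha} := \sum_i \langle\alpha,e_i\rangle T_i$ is the Dunkl operator in the direction $\alpha$.

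\textbf{First identity.} I would start from $\Delta_{\kappa} = \sum_i T_i^{2}$ and expand by the Leibniz rule for commutators,
\[
\Big[y_j, \tfrac{1}{2}\Delta_{\kappa}\Big] = \tfrac{1}{2}\sum_i [y_j, T_i^{2}] = \tfrac{1}{2}\sum_i\big([y_j, T_i]T_i + T_i[y_j, T_i]\big).
\]
Substituting $[y_j, T_i] = -\delta_{ij}\,\mathrm{Id} - S_{ij}$ splits the result into a diagonal part, which collapses to $-\tfrac{1}{2}(T_j + T_j) = -T_j$ exactly as in the classical case $\kappa=0$, and a reflection part $-\tfrac{1}{2}\sum_i (S_{ij}T_i + T_i S_{ij})$. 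Performing the sum over $i$ inside $S_{ij}$ produces $\sum_i \langle\alpha,e_i\rangle T_i = T_{\alpha}$, so the reflection part becomes $-\tfrac{1}{2}\sum_{\alpha\in R_{+}} 2\kappa(\alpha)\frac{\langle\alpha,e_j\rangle}{\|\alpha\|^{2}}\big(\sigma_{\alpha} T_{\alpha} + T_{\alpha} \sigma_{\alpha}\big)$.

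The crucial step is then to show that this reflection part vanishes, and I expect it, together with pinning down the normalization of $S_{ij}$, to be the only genuinely delicate point. For this I would invoke the $G$-equivariance of the Dunkl operators, $h\,T_{\xi}\, h^{-1} = T_{h\xi}$ for $h\in G$; taking $h=\sigma_{\alpha}$ and using $\sigma_{\alpha}\alpha = -\alpha$ and $\sigma_{\alpha}^{2} = \mathrm{Id}$ gives $\sigma_{\alpha} T_{\alpha} \sigma_{\alpha} = T_{-\alpha} = -T_{\alpha}$, i.e. $\sigma_{\alpha} T_{\alpha} + T_{\alpha}\sigma_{\alpha} = 0$. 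Hence every summand cancels and $[y_j, \Delta_{\kappa}/2] = -T_j$.

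\textbf{Second identity.} This should follow formally from the first by the standard computation of a commutator with an operator exponential. Writing $D := \Delta_{\kappa}/2$ and using that each $T_j$ commutes with $\Delta_{\kappa}$ (commutativity of the Dunkl operators), so $[T_j, D]=0$, I would show by induction that $[y_j, D^{n}] = -n\,T_j D^{n-1}$: indeed $[y_j, D^{n}] = \sum_{k=0}^{n-1} D^{k}[y_j,D]D^{n-1-k} = -\sum_{k=0}^{n-1} D^{k} T_j D^{n-1-k} = -nT_j D^{n-1}$, where $[y_j,D]=-T_j$ is the first identity and $T_j$ may be pulled past every power of $D$. Summing the exponential series then gives
\[
[y_j, e^{-D}] = \sum_{n\ge 0}\frac{(-1)^{n}}{n!}[y_j, D^{n}] = -T_j\sum_{n\ge 1}\frac{(-1)^{n}}{(n-1)!}D^{n-1} = T_j e^{-D},
\]
which is the claim. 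Since all operators act on $\mathcal{P}$, where $\Delta_{\kappa}$ lowers degree by two, the exponential $e^{-\Delta_{\kappa}/2}$ is a finite sum on each polynomial and no convergence issue arises, so the formal manipulation is fully justified.
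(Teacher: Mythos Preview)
The paper does not supply its own proof of this lemma; it is quoted from R\"osler's lecture notes \cite{rm} and used as a black box. Your argument is correct and is essentially the standard one: the first identity follows from the basic commutator $[T_i,y_j]=\delta_{ij}+S_{ij}$ together with the $G$-equivariance relation $\sigma_\alpha T_\alpha=-T_\alpha\sigma_\alpha$, and the second follows formally from the first using that $T_j$ commutes with $\Delta_\kappa$, with the exponential series terminating on each polynomial since $\Delta_\kappa$ lowers degree.
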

\begin{theorem} For any polynomial $p$, the integral operator \begin{eqnarray*}
V_{\kappa}(p)(z)=\frac{1}{(2\pi)^{m/2}} \int_{\mathbb{R}^{m}} K(iy, z) \mathcal{F}\left(p(\cdot)e^{-|\cdot|^{2}/2}\right)(y)dy
\end{eqnarray*} satisfies the relations \[ V_{\kappa} \partial_{j}= T_{j} V_{\kappa},\qquad j=1,\ldots, m.\]\end{theorem}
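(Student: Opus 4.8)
The plan is to apply $T_j$ in the variable $z$ directly to the integral, convert the resulting operations on the kernel into operations on the Gaussian-weighted polynomial through the Fourier transform, and check that the outcome is exactly $V_\kappa(\partial_j p)$. The whole argument rests on two algebraic facts: how $T_j$ acts on $K(iy,z)$, and the elementary Fourier rules for multiplication and differentiation.

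First I would record the action of $T_j$ on $K(iy,z)=e^{-\Delta_{(y)}/2}E_\kappa(iy,z)$. Since the Dunkl kernel is symmetric, $E_\kappa(x,y)=E_\kappa(y,x)$, the eigenvalue relation (\ref{p1}) applied in the second slot gives $T_j^{(z)}E_\kappa(iy,z)=iy_jE_\kappa(iy,z)$. As $e^{-\Delta_{(y)}/2}$ acts only on $y$ it commutes with $T_j^{(z)}$, so $T_j^{(z)}K(iy,z)=i\,e^{-\Delta_{(y)}/2}\bigl(y_jE_\kappa(iy,z)\bigr)$. Now I invoke Lemma \ref{rle} with $\kappa=0$ in the $y$-variable (so that $T_j$ becomes $\partial_{y_j}$ and $\Delta_\kappa$ becomes $\Delta_{(y)}$), which gives the commutation $e^{-\Delta_{(y)}/2}\,y_j=y_j\,e^{-\Delta_{(y)}/2}-\partial_{y_j}e^{-\Delta_{(y)}/2}$. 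Applying this to $E_\kappa(iy,z)$ yields the clean identity
\[
T_j^{(z)}K(iy,z)=i\bigl(y_jK(iy,z)-\partial_{y_j}K(iy,z)\bigr).
\]

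Next I would differentiate under the integral sign. The bound $|K(iy,z)|\le e^{|z|^2/2}$ from the preceding remark, together with the analogous bound $|\partial_{y_j}K(iy,z)|\le |z|\,e^{|z|^2/2}$ obtained from the Bochner representation in (\ref{h1}) (and the corresponding control on the reflected terms produced by the difference part of $T_j$), show that these kernels are bounded uniformly in $y$ on compact sets of $z$; since $\mathcal{F}\bigl(p\,e^{-|\cdot|^2/2}\bigr)$ is Schwartz, $T_j$ may be passed inside the integral. Substituting the identity above and integrating by parts in $y_j$ to move $\partial_{y_j}$ off $K$ and onto the Schwartz factor (the boundary terms vanish by the rapid decay), I obtain
\[
T_j V_\kappa(p)(z)=\frac{1}{(2\pi)^{m/2}}\int_{\mathbb{R}^m}K(iy,z)\,i\bigl(y_j+\partial_{y_j}\bigr)\mathcal{F}\bigl(p\,e^{-|\cdot|^2/2}\bigr)(y)\,dy.
\]

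The final step is to recognize the new integrand as $\mathcal{F}\bigl((\partial_j p)\,e^{-|\cdot|^2/2}\bigr)$. Writing $g=p\,e^{-|\cdot|^2/2}$, the Leibniz rule gives $(\partial_j p)\,e^{-|\cdot|^2/2}=\partial_j g+x_j g$, and the standard Fourier identities $\mathcal{F}(\partial_j g)(y)=iy_j\mathcal{F}(g)(y)$ and $\mathcal{F}(x_j g)(y)=i\partial_{y_j}\mathcal{F}(g)(y)$ combine to $\mathcal{F}\bigl((\partial_j p)\,e^{-|\cdot|^2/2}\bigr)(y)=i\bigl(y_j+\partial_{y_j}\bigr)\mathcal{F}(g)(y)$. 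This is precisely the integrand above, so $T_jV_\kappa(p)=V_\kappa(\partial_j p)$, that is $V_\kappa\partial_j=T_jV_\kappa$. I expect the only delicate point to be the rigorous justification of differentiating under the integral and the vanishing of the boundary terms, i.e. verifying that the Schwartz decay of $\mathcal{F}(g)$ dominates the at-most-Gaussian growth of $K(iy,z)$ and $\partial_{y_j}K(iy,z)$ uniformly in $z$ on compacta; the algebraic identities themselves are routine once Lemma \ref{rle} and the Fourier rules are in hand.
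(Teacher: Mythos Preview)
Your argument is correct and is essentially the paper's proof run in reverse: the paper starts from $V_\kappa(\partial_j p)$, uses the Fourier rules to produce the factor $i(y_j+\partial_{y_j})$ on $\mathcal{F}(pe^{-|\cdot|^2/2})$, integrates by parts to obtain $i(y_j-\partial_{y_j})K(iy,z)$, and then applies Lemma~\ref{rle} together with the eigenvalue relation~(\ref{p1}) to reach $T_jV_\kappa(p)$, whereas you begin with $T_jV_\kappa(p)$ and reverse each step. The ingredients---Lemma~\ref{rle} with $\kappa=0$, the eigenvalue property of $E_\kappa$, the Fourier differentiation/multiplication rules, and the integration by parts---are identical; your version has the mild bonus of spelling out the analytic justifications (differentiation under the integral, vanishing boundary terms) that the paper leaves implicit.
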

\begin{proof}
By direct computation, we have
\begin{eqnarray*}
&&V_{\kappa}(\partial_{j}p)(z)\\&=&\frac{1}{(2\pi)^{m/2}} \int_{\mathbb{R}^{m}} K(iy, z) \mathcal{F}\left( (\partial_{j}p)(\cdot)e^{-|\cdot|^{2}/2}\right)(y)dy\\
&=&\frac{1}{(2\pi)^{m/2}}\int_{\mathbb{R}^{m}}K(iy,z) \mathcal{F}\left( \partial_{j}(p(x)e^{-|x|^{2}/2})+x_{j}p(x)e^{-|x|^{2}/2} \right)(y)dy\\
&=&\frac{1}{(2\pi)^{m/2}}\int_{\mathbb{R}^{m}}K(iy,z) i(y_{j}+\partial_{y_{j}}) \mathcal{F}\left(p(x)e^{-|x|^{2}/2}\right)(y)dy\\
&=&\frac{1}{(2\pi)^{m/2}}\int_{\mathbb{R}^{m}} \left(i(y_{j}-\partial_{y_{j}})K(iy,z)\right) \mathcal{F}\left(p(x)e^{-|x|^{2}/2}\right)(y)dy\\
&=&\frac{1}{(2\pi)^{m/2}}\int_{\mathbb{R}^{m}} \left(e^{-\Delta_{(y)}/2}\left(iy_{j}E_{\kappa}(iy,z)\right)\right) \mathcal{F}\left(p(x)e^{-|x|^{2}/2}\right)(y)dy\\
&=&T_{j}V_{\kappa}(p)(z)
\end{eqnarray*}
where we have used Lemma \ref{rle} in the fifth equality and the relation (\ref{p1}) in the last step.
\end{proof}

In the following, we compute some special cases using the  integral expression (\ref{int1}). The explicit expressions for the intertwining operator  associated to  some special root systems are reobtained.
\begin{ex} When $\kappa_{\alpha}=0$,  expression (\ref{int1}) reduces to
 \begin{eqnarray*}V_{\kappa}(p)(z)&=&\frac{1}{(2\pi)^{m/2}} \int_{\mathbb{R}^{m}} K(iy, z) \mathcal{F}\left(p(x)e^{-|x|^{2}/2}\right)(y)dy\\&=&
\frac{1}{(2\pi)^{m/2}} \int_{\mathbb{R}^{m}} e^{i\langle y, z \rangle }e^{|z|^{2}/2} \mathcal{F}\left(p(x)e^{-|x|^{2}/2}\right)(y)dy \\&=&e^{|z|^{2}/2}\mathcal{F}^{-1}\left(\mathcal{F}\left(p(x)e^{-|x|^{2}/2}\right)\right)(z)\\&=&p(z)
\end{eqnarray*}
This coincides with the result that for $\kappa_{\alpha}=0$ the intertwining operator is the identity operator.
\end{ex}
\begin{ex} (Rank 1 case) For the rank one case with $k>0$, the Dunkl kernel is explicitly known as
\[E_{k}(iy, z)=\frac{\Gamma(k+1/2)}{\Gamma(1/2)\Gamma(k)}\int_{-1}^{1}e^{ityz}(1-t)^{k-1}(1+t)^{k}dt.  \]
It follows that
 \begin{eqnarray*}e^{-\Delta_{(y)}/2}E_{k}(iy,z)=\frac{\Gamma(k+1/2)}{\Gamma(1/2)\Gamma(k)}\int_{-1}^{1}e^{ityz}e^{t^{2}z^{2}/2}(1-t)^{k-1}(1+t)^{k}dt.
\end{eqnarray*}
Now, we have
 \begin{eqnarray*} V_{k}(p)(z)
 &=&\frac{\Gamma(k+1/2)}{(2\pi)^{1/2}\Gamma(1/2)\Gamma(k)}\int_{\mathbb{R}}\left(e^{-\Delta_{(y)}/2}p(-iy)\right) e^{-|y|^{2}/2}\\&&\times\int_{-1}^{1}e^{ityz}e^{t^{2}z^{2}/2}(1-t)^{k-1}(1+t)^{k}dtdy\\
  &=&\frac{\Gamma(k+1/2)}{(2\pi)^{1/2}\Gamma(1/2)\Gamma(k)}\int_{-1}^{1}\left(\int_{\mathbb{R}}\left(e^{-\Delta_{(y)}/2}p(-iy) \right)  e^{-|y|^{2}/2}e^{ityz}dy \right)\\&&\times e^{t^{2}z^{2}/2}(1-t)^{k-1}(1+t)^{k}dt\\
 &=&\frac{\Gamma(k+1/2)}{\Gamma(1/2)\Gamma(k)}\int_{-1}^{1}p(tz)(1-t)^{k-1}(1+t)^{k}dt
 \end{eqnarray*}
 where we have used Proposition \ref{h2}  again in the last equality.
\end{ex}
\begin{ex}(Root system $B_{2}$) When $\gamma=\kappa_{1}+\kappa_{2}>1/2$, the generalized Bessel function of type $B_{2}$ (i.e. the dihedral group $I_{4}$) admits the following Laplace-type integral representation:
\[\mathcal{J}_{\kappa}(iy, z)=\int_{\mathbb{R}^{2}}e^{i\langle y, x\rangle}H_{k}(x, z) dz \]
where $H_{k}(x, z)$ is a positive function with explicit expression, see \cite{AD}. It follows that
 \begin{eqnarray*}e^{-\Delta_{(y)}/2}\mathcal{J}_{\kappa}(iy,z)=\int_{\mathbb{R}^{2}}e^{i\langle y, x\rangle}e^{|x|^{2}/2}H_{k}(x, z) dz.
\end{eqnarray*}
Now, for the root system $B_{2}$ and $p(z)$ a $I_{4}$-invariant polynomial, which means that $p(g\cdot z)=p(z)$ for $ g\in I_{4}$, we have
\begin{eqnarray*} &&V_{\kappa}(p)(z)\\&=&\frac{1}{2\pi}\int_{\mathbb{R}^{2}}\left(e^{-\Delta_{(y)}/2}p(-iy)\right)\left(e^{-\Delta_{(y)}/2}\mathcal{J}_{\kappa}(iy,z)\right)e^{-|y|^{2}/2}dy \\&=& \frac{1}{2\pi} \int_{\mathbb{R}^{2}}\left(e^{-\Delta_{(y)}/2}p(-iy)\right)\left(\int_{\mathbb{R}^{2}}e^{-\Delta_{(y)}/2}e^{i\langle y, x\rangle}H_{k}(x, z) dz\right)e^{-|y|^{2}/2}dy
\\&=& \frac{1}{2\pi} \int_{\mathbb{R}^{2}}\left[\int_{\mathbb{R}^{2}}\left(e^{-\Delta_{(y)}/2}p(-iy)\right)\left(e^{-\Delta_{(y)}/2}e^{i\langle y, x\rangle}\right)e^{-|y|^{2}/2}dy\right] H_{k}(x, z) dz
\\&=&\int_{\mathbb{R}^{2}}p(x)H_{k}(x, z) dz.
\end{eqnarray*}
In general, whenever a  Laplace-type representation of the generalized Bessel function is obtained, it is  possible to obtain the intertwining operator by the method described  in this example.
\end{ex}

While the precise structure of the  Dunkl intertwining operator has been unknown for a long time, the formal inverse of this operator is easy to get, see \cite{DX}. This operator can be expressed as \[V_{\kappa}^{-1}(p)(x)={\rm exp}\left(\sum_{j=1}^{m}x_{j}T_{j}^{(y)}\right)p(y)\bigg|_{y=0}= \left[e^{\langle x, y\rangle},p(y)\right]_{\kappa} \]
and satisfies the following relation \[V_{\kappa}^{-1}T_{j}=\partial_{j}V_{\kappa}^{-1}, \qquad j=1, 2,\ldots, m.\]
The integral expression of $V_{\kappa}^{-1}$ can be obtained similarly as in Theorem \ref{gin}. We omit the proof here.
\begin{theorem} \label{iv2} Let $p(z)$ be a polynomial. Denote  \[L(iy,z):=e^{-\Delta_{\kappa,y}/2}e^{i\langle y,z\rangle}=c_{\kappa}^{-1}\int_{\mathbb{R}^{m}}E_{\kappa}(-ix,y) e^{-\langle x, z\rangle}e^{-|x|^{2}/2}e^{|y|^{2}/2}\omega_{\kappa}(x)dx.\]
 Then the inverse of the intertwining operator $V_{\kappa}^{-1}$ satisfies \begin{eqnarray*}V_{\kappa}^{-1}(p)(z)= c_{\kappa}^{-1} \int_{\mathbb{R}^{m}} L(iy,z)\mathcal{F}_{\kappa}\left(p(\cdot)e^{-|\cdot|^{2}/2}\right)(y) \omega_{\kappa}(y)dy\end{eqnarray*}
 where $\mathcal{F}_{\kappa}$ is the Dunkl transform.
\end{theorem}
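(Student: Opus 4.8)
The plan is to mirror the proof of Theorem \ref{gin}, interchanging the roles of the classical and Dunkl structures. Where that proof expanded $p$ through the reproducing kernel $e^{\langle y,z\rangle}$ of the classical Fischer form $[\cdot,\cdot]_{0}$ and then applied $V_{\kappa}$, here I start from the reproducing kernel of the \emph{Dunkl} Fischer form $[\cdot,\cdot]_{\kappa}$ --- namely the Dunkl kernel itself, by (\ref{ks1}) --- and then apply $V_{\kappa}^{-1}$. The plain exponential $e^{i\langle y,z\rangle}$ and the Dunkl transform $\mathcal{F}_{\kappa}$ thus take over the roles played in Theorem \ref{gin} by the Dunkl kernel and the ordinary transform $\mathcal{F}$, which is exactly the duality reflected in the statement.

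First I would record the reproducing property in complexified form. Since $\{\varphi_{\nu}\}$ is orthonormal for $[\cdot,\cdot]_{\kappa}$, (\ref{ks1}) gives $p(z)=[p(y),E_{\kappa}(y,z)]_{\kappa}$, and replacing $y$ by $-iy$ and using the componentwise homogeneity of $E_{\kappa}$ (the factors $(-i)^{n}$ and $i^{n}$ cancel on each homogeneous piece) yields $p(z)=[p(-iy),E_{\kappa}(iy,z)]_{\kappa}$. Applying $V_{\kappa}^{-1}$ in the variable $z$ and using $E_{\kappa}(iy,z)=V_{\kappa}^{(z)}(e^{i\langle y,\cdot\rangle})(z)$ (from $E_{\kappa}(x,y)=V_{\kappa}(e^{\langle\cdot,y\rangle})(x)$ and the symmetry of $E_{\kappa}$), the intertwiner is stripped off the kernel and I obtain $V_{\kappa}^{-1}(p)(z)=[p(-iy),e^{i\langle y,z\rangle}]_{\kappa}$. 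Expanding this pairing with the Dunkl form of the Macdonald identity turns it into $c_{\kappa}^{-1}\int_{\mathbb{R}^{m}}\bigl(e^{-\Delta_{\kappa}/2}p(-iy)\bigr)\,L(iy,z)\,e^{-|y|^{2}/2}\omega_{\kappa}(y)\,dy$, since $e^{-\Delta_{\kappa,y}/2}e^{i\langle y,z\rangle}=L(iy,z)$ by definition.

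It then remains to establish the Dunkl analogue of the first identity of (\ref{ep1}), namely $\bigl(e^{-\Delta_{\kappa}/2}p(-iy)\bigr)e^{-|y|^{2}/2}=\mathcal{F}_{\kappa}\bigl(p\,e^{-|\cdot|^{2}/2}\bigr)(y)$. This I would deduce from Proposition \ref{h2} evaluated at the point $-iy$: there $v(-iy)=-|y|^{2}$ and $E_{\kappa}(x,-iy)=E_{\kappa}(-ix,y)$ by homogeneity, so the proposition reads $\mathcal{F}_{\kappa}\bigl((e^{-\Delta_{\kappa}/2}q)e^{-|\cdot|^{2}/2}\bigr)(y)=e^{-|y|^{2}/2}q(-iy)$ for a polynomial $q$. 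Choosing $q=e^{\Delta_{\kappa}/2}p$ and using that complexification flips the sign of the Dunkl Laplacian --- because each $T_{j}$ is homogeneous of degree $-1$, one has $\Delta_{\kappa}[p(-iy)]=-(\Delta_{\kappa}p)(-iy)$ and hence $e^{-\Delta_{\kappa,y}/2}[p(-iy)]=(e^{\Delta_{\kappa}/2}p)(-iy)$ --- reconciles the two expressions. Substituting this identity into the integral above makes the factors $e^{\pm|y|^{2}/2}$ cancel and delivers the claimed formula for $V_{\kappa}^{-1}$; applying the same identity with $p=e^{-\langle\cdot,z\rangle}$ produces the stated integral representation of $L(iy,z)$.

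The main obstacle is analytic rather than algebraic: the pairing $[p(-iy),e^{i\langle y,z\rangle}]_{\kappa}$ and the Macdonald identity are a priori stated for polynomials, whereas $e^{i\langle y,z\rangle}$ is only entire, so these steps must be justified through the generating-function/reproducing-kernel interpretation together with dominated convergence, exactly as in the remarks following Theorem \ref{gin}. In particular one needs a bound on $L(iy,z)$ (the analogue of $|K(iy,z)|\le e^{|z|^{2}/2}$) guaranteeing convergence of the integral against the Schwartz function $\mathcal{F}_{\kappa}(p\,e^{-|\cdot|^{2}/2})$. The one genuinely delicate computational point is the sign flip of $\Delta_{\kappa}$ under $y\mapsto -iy$: overlooking it would spuriously produce $e^{+\Delta_{\kappa}/2}$ and break the identification with the Dunkl transform.
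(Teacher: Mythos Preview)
Your proposal is correct and follows exactly the dualized scheme the paper alludes to when it says the proof ``can be obtained similarly as in Theorem \ref{gin}'' and omits the details. One small shortcut: since the paper already records $V_{\kappa}^{-1}(p)(z)=[e^{\langle z,y\rangle},p(y)]_{\kappa}$ just before the statement, you may start directly from its complexified form $V_{\kappa}^{-1}(p)(z)=[p(-iy),e^{i\langle y,z\rangle}]_{\kappa}$ and skip the detour through the reproducing property of $E_{\kappa}$ and the stripping of $V_{\kappa}$.
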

\begin{theorem} For any polynomial $p(x)$, the integral operator $V_{\kappa}^{-1}$ satisfies \[V_{\kappa}^{-1}T_{j}=\partial_{j}V_{\kappa}^{-1},\quad j=1,\ldots, m.\]\end{theorem}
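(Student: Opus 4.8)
The plan is to mirror the direct verification already carried out for the relation $V_{\kappa}\partial_{j}=T_{j}V_{\kappa}$, systematically replacing every ordinary object by its Dunkl counterpart: the classical Fourier transform $\mathcal{F}$ becomes the Dunkl transform $\mathcal{F}_{\kappa}$, the kernel $K(iy,z)$ becomes $L(iy,z)$, and the ordinary commutator $[y_{j},e^{-\Delta_{(y)}/2}]$ is replaced by the Dunkl commutator recorded in Lemma~\ref{rle}. Starting from the integral formula for $V_{\kappa}^{-1}$ in Theorem~\ref{iv2}, I would compute $V_{\kappa}^{-1}(T_{j}p)(z)$ and show term by term that it collapses to $\partial_{j}V_{\kappa}^{-1}(p)(z)$.

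First I would rewrite the integrand. Since $e^{-|x|^{2}/2}$ is $G$-invariant, the Dunkl product rule gives
\[
(T_{j}p)(x)e^{-|x|^{2}/2}=T_{j}\bigl(p(x)e^{-|x|^{2}/2}\bigr)+x_{j}p(x)e^{-|x|^{2}/2},
\]
which is the exact analogue of the splitting used in the $V_{\kappa}$ case. Applying the Dunkl transform and using its two intertwining properties $\mathcal{F}_{\kappa}(T_{j}f)(y)=iy_{j}\mathcal{F}_{\kappa}(f)(y)$ and $\mathcal{F}_{\kappa}(x_{j}f)(y)=iT_{j}^{(y)}\mathcal{F}_{\kappa}(f)(y)$, the Fourier side becomes $i(y_{j}+T_{j}^{(y)})\mathcal{F}_{\kappa}(p\,e^{-|\cdot|^{2}/2})(y)$. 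Inserting this into the formula of Theorem~\ref{iv2} and integrating by parts in $y$ — using that $T_{j}$ is skew-adjoint with respect to $\omega_{\kappa}(y)\,dy$ — transfers the operator $T_{j}^{(y)}$ onto $L(iy,z)$, leaving the effective multiplier $i\,(y_{j}-T_{j}^{(y)})L(iy,z)$ inside the integral.

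The decisive step is then to identify $i\,(y_{j}-T_{j}^{(y)})L(iy,z)$ with $\partial_{j}^{(z)}L(iy,z)$. Here I would use the commutator of Lemma~\ref{rle} in the form $e^{-\Delta_{\kappa,y}/2}y_{j}=(y_{j}-T_{j})e^{-\Delta_{\kappa,y}/2}$, together with $L(iy,z)=e^{-\Delta_{\kappa,y}/2}e^{i\langle y,z\rangle}$, to obtain
\[
(y_{j}-T_{j}^{(y)})L(iy,z)=e^{-\Delta_{\kappa,y}/2}\bigl(y_{j}e^{i\langle y,z\rangle}\bigr)=-i\,\partial_{z_{j}}\,e^{-\Delta_{\kappa,y}/2}e^{i\langle y,z\rangle}=-i\,\partial_{z_{j}}L(iy,z),
\]
where I used $y_{j}e^{i\langle y,z\rangle}=-i\,\partial_{z_{j}}e^{i\langle y,z\rangle}$ and pulled $\partial_{z_{j}}$ out of the $y$-operator. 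Multiplying by $i$ yields exactly $\partial_{j}^{(z)}L(iy,z)$. Since $\partial_{j}$ acts only on $z$ it passes outside the $y$-integral, and the whole expression collapses to $\partial_{j}V_{\kappa}^{-1}(p)(z)$, which is the claim.

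The main obstacle I anticipate is not the algebra, which runs in close parallel to the $V_{\kappa}$ computation, but the analytic justification of the manipulations under the integral sign: the Dunkl integration by parts (which requires the boundary contributions attached to the skew-adjointness of $T_{j}$ against $\omega_{\kappa}(y)\,dy$ to vanish) and the differentiation in $z$ under the integral. Both rest on adequate decay in $y$: the factor $\mathcal{F}_{\kappa}(p\,e^{-|\cdot|^{2}/2})(y)$ is rapidly decreasing, and this Gaussian-type decay must be shown to dominate the growth of $L(iy,z)$ coming from the $e^{|y|^{2}/2}$ factor in its integral representation, exactly as the bound $|K(iy,z)|\le e^{|z|^{2}/2}$ controlled the $V_{\kappa}$ case. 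Once these estimates are in place, all the formal steps above are legitimate and the proof is complete.
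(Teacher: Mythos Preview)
Your proposal is correct and follows essentially the same approach as the paper's proof: the same product-rule splitting for $(T_{j}p)e^{-|x|^{2}/2}$, the same use of the Dunkl transform intertwining relations to produce $i(y_{j}+T_{j}^{(y)})$, the same integration by parts via skew-adjointness of $T_{j}$ against $\omega_{\kappa}(y)\,dy$, and the same application of Lemma~\ref{rle} to reduce $i(y_{j}-T_{j}^{(y)})L(iy,z)$ to $e^{-\Delta_{\kappa,y}/2}(iy_{j}e^{i\langle y,z\rangle})=\partial_{z_{j}}L(iy,z)$. The paper's proof is slightly more terse and does not dwell on the analytic justifications you flag, but the logical skeleton is identical.
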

\begin{proof} Since $e^{-|x|^{2}/2}$ is $G$-invariant, we have by direct computation,
\begin{eqnarray}\label{iv1}T_{j}(p(x)e^{-|x|^{2}/2})=T_{j}(p) e^{-|x|^{2}/2}+p(x) T_{j}(e^{-|x|^{2}/2}).\end{eqnarray}
Now, the intertwining relations are verified as follows
\begin{eqnarray*}
&&V_{\kappa}^{-1}(T_{j}p)(z)\\&=&c_{\kappa}^{-1}  \int_{\mathbb{R}^{m}} L(iy, z) \mathcal{F}_{\kappa}\left( (T_{j}p)(\cdot)e^{-|\cdot|^{2}/2}\right)(y)\omega_{\kappa}(y)dy\\
&=&c_{\kappa}^{-1} \int_{\mathbb{R}^{m}}L(iy,z) \mathcal{F}_{\kappa}\left( T_{j}(p(x)e^{-|x|^{2}/2})+x_{j}p(x)e^{-|x|^{2}/2} \right)(y)\omega_{\kappa}(y)dy\\
&=&c_{\kappa}^{-1} \int_{\mathbb{R}^{m}}L(iy,z) i(y_{j}+T_{y_{j}}) \mathcal{F}_{\kappa}\left(p(x)e^{-|x|^{2}/2}\right)(y)\omega_{\kappa}(y)dy\\
&=&c_{\kappa}^{-1} \int_{\mathbb{R}^{m}} \left[i(y_{j}-T_{y_{j}})L(iy,z)\right] \mathcal{F}_{\kappa}\left(p(x)e^{-|x|^{2}/2}\right)(y)\omega_{\kappa}(y)dy\\
&=&c_{\kappa}^{-1} \int_{\mathbb{R}^{m}} \left[e^{-\Delta_{\kappa, y}/2}\left(iy_{j}e^{\langle iy,z\rangle}\right)\right] \mathcal{F}_{\kappa}\left(p(x)e^{-|x|^{2}/2}\right)(y)\omega_{\kappa}(y)dy\\
&=&\partial_{z_{j}}V_{\kappa}^{-1}(p)(z),
\end{eqnarray*}
where we have used formula (\ref{iv1}) in the second line and Lemma \ref{rle} for the fifth equality.
\end{proof}

\section{The case of dihedral groups}
 The dihedral group $I_{k}$ is the group of symmetries of the regular $k-$gon. We use complex coordinates $z=x_{1}+ix_{2}$ and identify $\mathbb{R}^{2}$ with $\mathbb{C}$. Each reflection  $\sigma_{j}$ in $I_{k}$ is given by $z\rightarrow \overline{z}e^{ij2\pi/k}$, $0\le j\le 2k-1.$ In this section, we will consider the explicit expressions for the Dunkl kernel and the intertwining operator associated to the dihedral groups. These can be achieved mainly due to the reduction method, i.e.  the general $I_{k}$ case  can be reduced to the explicitly known cases $\mathbb{Z}_{2}$ and $\mathbb{Z}_{2}^{2}$, see  Section 7.6 in \cite{DX}. In turn, the reduction determines the representations of the formulas. As we will see, the Dunkl kernel and the generalized Bessel function are expressed as the compositions of two integrals.  One is the Weyl fractional integral ( the intertwining operator associated to the group $\mathbb{Z}_{2}$) and the other one corresponds to the reduction.

   On the other hand, the dihedral group $I_{k}$ is generated by the rotation $ z\rightarrow ze^{i2\pi/k}$ and the reflection $z\rightarrow \overline{z}$. 
   The action of the rotations and reflection can be seen from the formula, see  Theorem \ref{bess1} and Theorem \ref{ib111}. The positivity and bounds of the Dunkl kernel and generalized Bessel function will be seen directly from the formulas as well.

\subsection{The generalized Bessel function}
It was proved  in \cite{Dn} that the generalized Bessel function can be expressed as a  symmetric beta integral of an infinite series.  Let $z=|z|e^{i\phi_{1}}$, $w=|w|e^{i\phi_{2}}$ and $\xi_{u,v}(\phi_{1}, \phi_{2})=v\cos (\phi_{1})\cos (\phi_{2})+u\sin(\phi_{1})\sin (\phi_{2})$, then we have the following expression.  

\begin{theorem}\cite{Dn} For the dihedral group $I_{2k}$, $k\ge 2$ and non-negative multiplicity function $\kappa=(\alpha, \beta)$,  the generalized Bessel function  is given by
\begin{eqnarray}\label{B1}\mathcal{J}_{\kappa}(z, w)
&=&\frac{1}{2}\int_{-1}^{1}\int_{-1}^{1}\biggl(f_{2k, \alpha+\beta}(|zw|,\xi_{u,v}(k\phi_{1}, k\phi_{2}), 1)\\&&+f_{2k,\alpha+\beta}(|zw|,-\xi_{u,v}(k\phi_{1}, k\phi_{2}),1)\biggr)d\nu^{\alpha}(u)d\nu^{\beta}(v)\nonumber
\end{eqnarray}
where $f_{2k,\lambda}(b, \xi, t)$  is the infinite series \begin{eqnarray*} \label{f1} f_{2k,\lambda}(b, \xi, t)=\Gamma(k\lambda +1)\biggl(\frac{2}{b}\biggr)^{k\lambda} \sum_{j=0}^{\infty}\frac{(j+\lambda)}{\lambda} \mathcal{I}_{k(j+\lambda)}(bt)C_{j}^{(\lambda)}(\xi)
\end{eqnarray*}
with $C_{j}^{(\lambda)}(\xi)$  the Gegenbauer polynomial and $\mathcal{I}_{k}(b)$  the modified Bessel function of the first kind, i.e.
\[\mathcal{I}_{\nu}(b)=\sum_{n=0}^{\infty}\frac{(b/2)^{2n+\nu}}{n!\Gamma(n+\nu+1)}, \,b\in \mathbb{R} \]
as well as the symmetric beta measure
\begin{eqnarray*} d\nu^{\alpha}(u)&=&\frac{\Gamma(\alpha+1/2)}{\sqrt{\pi}\Gamma(\alpha)}(1-u^2)^{\alpha-1}du.\end{eqnarray*}
\end{theorem}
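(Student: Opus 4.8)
The plan is to build the right-hand side by separation of variables in the Dunkl--Laplace eigenvalue equation, and to fix it uniquely by the defining properties of the generalized Bessel function. Recall that $\mathcal{J}_{\kappa}(\cdot,w)$ is the unique $I_{2k}$-invariant, entire joint eigenfunction of the algebra of invariant Dunkl operators normalised by $\mathcal{J}_{\kappa}(0,w)=1$; in the plane this algebra is generated by $|z|^{2}$ and one invariant of degree $2k$, and in particular $\Delta_{\kappa}\mathcal{J}_{\kappa}(\cdot,w)=|w|^{2}\mathcal{J}_{\kappa}(\cdot,w)$, the positive eigenvalue being responsible for the modified Bessel functions $\mathcal{I}_{\nu}$. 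It therefore suffices to exhibit an invariant solution of this system with the correct normalisation and growth, and uniqueness then identifies it with the stated formula.

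Writing $z=\rho e^{i\phi}$, the $I_{2k}$ weight factorises up to a constant as $\omega_{\kappa}(z)=\rho^{2k(\alpha+\beta)}|\cos(k\phi)|^{2\beta}|\sin(k\phi)|^{2\alpha}$, so that
\[
\Delta_{\kappa}=\partial_{\rho}^{2}+\frac{2\gamma+1}{\rho}\partial_{\rho}+\frac{1}{\rho^{2}}\mathcal{L}_{\kappa},\qquad \gamma=k(\alpha+\beta)=k\lambda,
\]
with $\mathcal{L}_{\kappa}$ an angular Dunkl operator on the circle. This is precisely the reduction of Section 7.6 of \cite{DX}: under the angle multiplication $\theta=k\phi$ the angular weight becomes the $\mathbb{Z}_{2}^{2}$ weight $|\cos\theta|^{2\beta}|\sin\theta|^{2\alpha}$, so the eigenfunctions of $\mathcal{L}_{\kappa}$ are the $\mathbb{Z}_{2}^{2}$ angular $h$-harmonics in the variable $k\phi$, governed by the Gegenbauer polynomials $C_{j}^{(\lambda)}$. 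The $j$-th angular eigenvalue equals $k^{2}j(j+2\lambda)$, the factor $k^{2}$ coming from $\partial_{\phi}=k\partial_{\theta}$; separating variables then produces a modified Bessel equation whose solution regular at the origin has order $\sqrt{\gamma^{2}+k^{2}j(j+2\lambda)}=k(j+\lambda)$. This accounts for the factor $k$ in the Bessel orders of $f_{2k,\lambda}$.

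To pin down the coefficients and the integral representation, I would first treat the base case $k=1$, i.e. $I_{2}=\mathbb{Z}_{2}^{2}$. There the Dunkl kernel factorises as $E_{\beta}(x_{1},y_{1})E_{\alpha}(x_{2},y_{2})$, and the rank-one formula (\ref{kk1}) represents each factor as a beta integral; symmetrising over $\mathbb{Z}_{2}^{2}$ and using that $d\nu^{\alpha},d\nu^{\beta}$ are even (which annihilates the cross term $\sinh\cdot\sinh$) gives
\[
\mathcal{J}^{\mathbb{Z}_{2}^{2}}(z,w)=\int_{-1}^{1}\int_{-1}^{1}\cosh\!\big(|zw|\,\xi_{u,v}(\phi_{1},\phi_{2})\big)\,d\nu^{\alpha}(u)\,d\nu^{\beta}(v),
\]
since $|zw|\,\xi_{u,v}(\phi_{1},\phi_{2})=x_{1}y_{1}v+x_{2}y_{2}u$. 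As $|\xi_{u,v}|\le 1$, Gegenbauer's expansion of the exponential yields $e^{|zw|\xi}=f_{2,\lambda}(|zw|,\xi,1)$, so (\ref{B1}) holds for $k=1$, the symmetrisation $\tfrac12[f(\cdot,\xi,\cdot)+f(\cdot,-\xi,\cdot)]$ reproducing the $\cosh$. For general $k$ the angular modes $C_{j}^{(\lambda)}(\xi_{u,v}(k\phi_{1},k\phi_{2}))$ are unchanged, while the radial factor is forced by the previous paragraph to carry the order $k(j+\lambda)$; replacing $\mathcal{I}_{j+\lambda}$ by $\mathcal{I}_{k(j+\lambda)}$ and $\lambda$ by $k\lambda$ in the prefactor turns $f_{2,\lambda}$ into $f_{2k,\lambda}$, and the averaging over $\pm\xi_{u,v}$ encodes the reflection $z\mapsto\overline{z}$, which flips the sign of the $\sin$-contributions.

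The main obstacle is to make the passage from $k=1$ to general $k$ rigorous: rather than merely ``replacing indices'', one verifies directly that the deformed series $f_{2k,\lambda}$, integrated against the beta measures, is $I_{2k}$-invariant, entire, normalised to $1$ at the origin, and a joint eigenfunction of the invariant operators---the radial piece handled by the Bessel equation of order $k(j+\lambda)$ and the angular piece by the Gegenbauer equation---so that uniqueness applies. This requires uniform control of the series, combining the bound on $\mathcal{I}_{\nu}$ with the growth of $C_{j}^{(\lambda)}$ on $[-1,1]$, to justify term-by-term differentiation and the interchange of summation with the beta integrations, as well as a check against the degree-$2k$ invariant operator to confirm that the $k$-deformed radial order is the correct one. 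The delicate bookkeeping throughout is keeping the angular Gegenbauer index $j$ and the radial Bessel index $k(j+\lambda)$ correctly coupled through the reduction.
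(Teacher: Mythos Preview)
The paper does not prove this theorem at all: it is quoted verbatim from Demni \cite{Dn} and used as input for the rest of Section~4. There is therefore no ``paper's own proof'' to compare your proposal against; the authors simply take the formula as known.

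As for your sketch on its own merits: the strategy---separate variables in $\Delta_{\kappa}$, reduce the angular part from $I_{2k}$ to $\mathbb{Z}_{2}^{2}$ via $\theta=k\phi$, read off the Bessel order $k(j+\lambda)$ from the radial equation, and appeal to uniqueness of the normalised invariant eigenfunction---is exactly the line taken in \cite{Dn} and in Section~7.6 of \cite{DX}. The genuine content, which you flag but do not carry out, is the verification that the candidate series is a joint eigenfunction of the \emph{full} algebra of $I_{2k}$-invariant Dunkl operators (not just $\Delta_{\kappa}$), since $\Delta_{\kappa}$ alone does not pin down $\mathcal{J}_{\kappa}$. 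In Demni's argument this is handled by matching the expansion against the known reproducing kernel of $h$-harmonics for $I_{2k}$, which already encodes the correct coupling between the Gegenbauer index $j$ and the Bessel order $k(j+\lambda)$; your ``replace $\mathcal{I}_{j+\lambda}$ by $\mathcal{I}_{k(j+\lambda)}$'' step is a placeholder for precisely that computation. Without it, the uniqueness argument is incomplete, because an arbitrary $\Delta_{\kappa}$-eigenfunction with the right symmetry and normalisation need not be $\mathcal{J}_{\kappa}$.
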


The series $f_{2k,\lambda}(b, \xi, t)$  admits  a closed form  in the Laplace domain as studied in our previous paper \cite{CDL}. By the inverse Laplace transform, it is realized that  $f_{2k, \lambda}(b, \xi, t)$ is in fact a Humbert $\Phi_{2}^{(m)}$ function, see \cite{CDL} formula (19). Later, this was proved more generally in \cite{DD1} using a different method. We summarize the results and present a more compact expression  using derivatives in the Laplace domain in the following lemma.

 \begin{lemma}\label{hl1}
For $k\ge 2$,
 the Laplace transform of $f_{2k,\lambda}(b, \xi, t)$ with respect to $t$ is given by
\begin{eqnarray}\label{bes}
\mathcal{L}[f_{2k,\lambda}(b,\xi, \cdot)](s)&=&\Gamma(k\lambda+1)\frac{2^{k\lambda}}{S}\frac{(S+s)^{k}-(s-S)^{k}}{((S+s)^{k}-2 b^{k}\xi+(s-S)^{k} )^{\lambda+1}}\nonumber\\
\nonumber\\&=&-\Gamma(k\lambda)2^{k\lambda}\frac{d}{ds}\left( \frac{ 1}{((S+s)^{k}-2 b^{k}\xi+(s-S)^{k} )^{\lambda}}\right),
\end{eqnarray}
where $S=\sqrt{s^2-b^2}$. Moreover, we  have \begin{eqnarray} \label{f2} f_{2k,\lambda}(b, \xi, 1)&=&\Phi_{2}^{(k)}\left(\lambda, \ldots, \lambda; k\lambda; b_{0},\ldots, b_{k-1}\right)\\
&=&e^{b_{0}}\Phi_{2}^{(k-1)}\left(\lambda, \ldots, \lambda; k\lambda; b_{1}-b_{0},\ldots, b_{k-1}-b_{0}\right)\nonumber
\end{eqnarray}
where $b_{j}=b\cos\left((q-2j\pi)/k\right)$,  $
 j=0, \ldots, k-1$  in which $q=\arccos (\xi).$
\end{lemma}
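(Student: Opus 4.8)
The plan is to prove all four assertions of the lemma entirely in the Laplace domain, reducing everything to the generating function of the Gegenbauer polynomials and to a Chebyshev factorization. I would first establish the closed form (\ref{bes}) by transforming the defining series of $f_{2k,\lambda}$ term by term. The Laplace transform of the modified Bessel function is classical,
\[ \mathcal{L}[\mathcal{I}_{\nu}(b\,\cdot)](s)=\frac{1}{S}\left(\frac{b}{s+S}\right)^{\nu},\qquad S=\sqrt{s^{2}-b^{2}}, \]
valid for $\mathrm{Re}\,s>b$, where term-by-term integration is legitimate because the resulting series then converges geometrically. Pulling out the prefactor $\Gamma(k\lambda+1)(2/b)^{k\lambda}$ leaves, up to the factor $2^{k\lambda}/\bigl(S(s+S)^{k\lambda}\bigr)$, the series $\sum_{j\ge 0}\frac{j+\lambda}{\lambda}r^{j}C_{j}^{(\lambda)}(\xi)$ in the variable $r=b^{k}/(s+S)^{k}$. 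Differentiating the Gegenbauer generating function $\sum_{j}C_{j}^{(\lambda)}(\xi)r^{j}=(1-2\xi r+r^{2})^{-\lambda}$ and combining gives
\[ \sum_{j\ge 0}\frac{j+\lambda}{\lambda}C_{j}^{(\lambda)}(\xi)r^{j}=\frac{1-r^{2}}{(1-2\xi r+r^{2})^{\lambda+1}}. \]
Substituting $r=b^{k}/(s+S)^{k}$ and repeatedly using $(s+S)(s-S)=b^{2}$ turns $1-r^{2}$ into $\bigl((s+S)^{k}-(s-S)^{k}\bigr)/(s+S)^{k}$ and $1-2\xi r+r^{2}$ into $\bigl((s+S)^{k}-2b^{k}\xi+(s-S)^{k}\bigr)/(s+S)^{k}$; collecting the powers of $(s+S)$ yields the first equality of (\ref{bes}).

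The compact derivative form is then a one-line check. Writing $D(s)=(s+S)^{k}-2b^{k}\xi+(s-S)^{k}$ and using $dS/ds=s/S$ one finds $D'(s)=\frac{k}{S}\bigl((s+S)^{k}-(s-S)^{k}\bigr)$, hence $\frac{d}{ds}D(s)^{-\lambda}=-\frac{\lambda k}{S}\,\frac{(s+S)^{k}-(s-S)^{k}}{D(s)^{\lambda+1}}$. Multiplying by $-\Gamma(k\lambda)2^{k\lambda}$ and using $k\lambda\,\Gamma(k\lambda)=\Gamma(k\lambda+1)$ reproduces the first expression, which is the second equality of (\ref{bes}).

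The heart of the matter is identifying the value at $t=1$ with the Humbert function, and the key algebraic fact is a factorization of $D(s)$. Substituting $s=b\cosh\theta$, $S=b\sinh\theta$ (legitimate since $(s+S)(s-S)=b^{2}$) gives $D(s)=2b^{k}\bigl(\cosh(k\theta)-\cos q\bigr)$ with $q=\arccos\xi$. Since $\cosh(k\theta)=T_{k}(\cosh\theta)$, with $T_{k}$ the degree-$k$ Chebyshev polynomial of the first kind having leading coefficient $2^{k-1}$, and the roots of $T_{k}(x)=\cos q$ are precisely $x=\cos\frac{q+2\pi j}{k}$, I obtain
\[ D(s)=2^{k}\prod_{j=0}^{k-1}(s-b_{j}),\qquad b_{j}=b\cos\frac{q-2\pi j}{k}, \]
where evenness and $2\pi$-periodicity of the cosine match the two indexings of the roots. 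With this factorization the derivative form reads $\mathcal{L}[f_{2k,\lambda}(b,\xi,\cdot)](s)=-\Gamma(k\lambda)\frac{d}{ds}\prod_{j}(s-b_{j})^{-\lambda}$, while (\ref{l1}) with $\gamma=k\lambda$, $\beta_{j}=\lambda$ and $\lambda_{j}=b_{j}$ gives $\mathcal{L}[t^{k\lambda-1}\Phi_{2}^{(k)}(\lambda,\ldots,\lambda;k\lambda;b_{0}t,\ldots,b_{k-1}t)](s)=\Gamma(k\lambda)\prod_{j}(s-b_{j})^{-\lambda}$. The rule $\mathcal{L}[t\,g](s)=-\frac{d}{ds}\mathcal{L}[g](s)$ shows the two transforms coincide, so by uniqueness $f_{2k,\lambda}(b,\xi,t)=t^{k\lambda}\Phi_{2}^{(k)}(\lambda,\ldots,\lambda;k\lambda;b_{0}t,\ldots,b_{k-1}t)$, and $t=1$ gives the first line of (\ref{f2}). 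I expect this Chebyshev factorization to be the main obstacle, since it is exactly what converts the spectral quantity $\sqrt{s^{2}-b^{2}}$ into the explicit nodes $b_{j}$; the remaining steps are bookkeeping.

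The second line of (\ref{f2}) is a reduction formula that I would verify directly from the integral representation (\ref{pi1}). Applied to the right-hand side $\Phi_{2}^{(k-1)}(\lambda,\ldots,\lambda;k\lambda;b_{1}-b_{0},\ldots,b_{k-1}-b_{0})$, which is admissible since $\gamma-\sum_{j}\beta_{j}=\lambda>0$, the substitution $t_{0}=1-\sum_{j=1}^{k-1}t_{j}$ absorbs the factor $e^{b_{0}}$ through $b_{0}+\sum_{j\ge 1}(b_{j}-b_{0})t_{j}=\sum_{j=0}^{k-1}b_{j}t_{j}$ and turns the weight into $\prod_{j=0}^{k-1}t_{j}^{\lambda-1}$ on the closed simplex $\{t_{j}>0,\ \sum_{j=0}^{k-1}t_{j}=1\}$. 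Expanding the exponential and evaluating the resulting Dirichlet integrals recovers exactly the series $\Phi_{2}^{(k)}(\lambda,\ldots,\lambda;k\lambda;b_{0},\ldots,b_{k-1})$, which both proves the identity and places the function in the regime where (\ref{pi1}) applies.
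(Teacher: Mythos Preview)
Your proof is correct and follows essentially the same route the paper sketches: the Laplace transform closed form via the Gegenbauer generating function (which the paper cites from \cite{CDL}), the factorization (\ref{fa1}) of the denominator, and then formula (\ref{l1}) to identify $f_{2k,\lambda}(b,\xi,1)$ with the Humbert function. Your use of the derivative form together with the rule $\mathcal{L}[tg](s)=-\tfrac{d}{ds}\mathcal{L}[g](s)$ to match with (\ref{l1}) is a clean way to make that step explicit, and your Dirichlet-integral verification of the second line of (\ref{f2}) fills in precisely what the paper's second remark alludes to about the integral representation (\ref{pi1}) only being admissible for the $(k-1)$-variable form.
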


\begin{remark} The denominator $(S+s)^{k}-2b^{k}\xi+(s-S)^{k}$ is a polynomial in $s$ and satisfies the factorization
\begin{eqnarray}\label{fa1}
(S+s)^{k}- 2b^{k}\xi+(s-S)^{k}=2^{k}\prod_{l=0}^{k-1}\left(s-b \cos\left(\frac{q-2\pi l}{k}\right)\right).
\end{eqnarray}
 Formula (\ref{f2}) follows from the Laplace transform formula (\ref{l1}) and the above factorization (\ref{fa1}).
\end{remark}
\begin{remark} The first identity in (\ref{f2}) looks more symmetric than the second one. However, it is not possible to get an integral expression for $f_{2k, \lambda}(b, \xi, t)$ directly  from the integral representation (\ref{pi1}) of $\Phi_{2}^{(m)}$, due to the the validity  of the conditions there. This is not a problem for the second identity.
\end{remark}


Combining the Humbert function expression of $f_{2k, \lambda}(b, \xi, 1)$ in (\ref{f2})  and the integral expression (\ref{pi1}) for the function $\Phi_{2}^{(m)}$, we have the  following integral expression of the generalized Bessel function. From this integral expression, it is  directly seen that  the generalized Bessel function $\mathcal{J}_{\kappa}(x, y) $ is positive
 and that the complexified generalized Bessel function $\mathcal{J}_{\kappa}(ix, y)$ is bounded by $1$.
\begin{theorem}\label{fir} For $k\ge 2$,  the generalized Bessel function associated to the dihedral group $I_{2k}$ is given by
\begin{eqnarray}\label{mv1}\mathcal{J}_{\kappa}(z, w )&=&\frac{\Gamma{(k(\alpha+\beta))}}{2\Gamma(\alpha+\beta)^{k}}
\int_{-1}^{1}\int_{-1}^{1}\int_{T^{k-1}}\left(e^{\sum_{j=0}^{k-1}a_{j}^{+}t_{j}}+e^{\sum_{j=0}^{k-1}a_{j}^{-}t_{j}}\right) \nonumber\\&&\times\prod_{j=0}^{k-1}t_{j}^{\alpha+\beta-1}dt_{1}\ldots dt_{k-1}d\nu^{\alpha}(u)d\nu^{\beta}(v)
\end{eqnarray}
where  $a_{j}^{+}=|zw|\cos\left(\frac{q_{u, v}(k\phi_{1},  k\phi_{2})-2j\pi}{k}\right)$, $a_{j}^{-}=|zw|\cos\left(\frac{\pi-q_{u, v}(k\phi_{1},  k\phi_{2})-2j\pi}{k}\right),$   $
 j=0, \ldots, k-1$,
  $q_{u, v}(\phi_{1},  \phi_{2})=\arccos (\xi_{u,v}(\phi_{1}, \phi_{2}))$ and  $t_{0}=1-\sum_{j=1}^{k-1}t_{j}$.
\end{theorem}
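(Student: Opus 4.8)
The plan is to start from the series expression (\ref{B1}) for $\mathcal{J}_{\kappa}(z,w)$ established in \cite{Dn} and to convert the two occurrences of $f_{2k,\alpha+\beta}$ into integrals over the simplex by combining Lemma \ref{hl1} with the integral representation (\ref{pi1}). The first move is to replace each $f_{2k,\alpha+\beta}(|zw|,\pm\xi_{u,v}(k\phi_{1},k\phi_{2}),1)$ by the \emph{second} of the two identities in (\ref{f2}), namely $e^{b_{0}}\Phi_{2}^{(k-1)}(\lambda,\ldots,\lambda;k\lambda;b_{1}-b_{0},\ldots,b_{k-1}-b_{0})$ with $\lambda=\alpha+\beta$. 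This choice is the crucial one: as observed in the remark following Lemma \ref{hl1}, the hypotheses of (\ref{pi1}) fail for the more symmetric $\Phi_{2}^{(k)}$ form, where $\gamma-\sum_{j}\beta_{j}=k\lambda-k\lambda=0$, whereas for the $\Phi_{2}^{(k-1)}$ form one has $\gamma-\sum_{j}\beta_{j}=k\lambda-(k-1)\lambda=\alpha+\beta>0$ and each $\beta_{j}=\alpha+\beta>0$, so the integral representation applies legitimately.

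Next I would apply (\ref{pi1}) with $m=k-1$, $\gamma=k(\alpha+\beta)$ and every $\beta_{j}=\alpha+\beta$. The normalising constant becomes $C_{\beta}^{(\gamma)}=\Gamma(k(\alpha+\beta))/\Gamma(\alpha+\beta)^{k}$, since $\Gamma(\gamma-\sum_{j}\beta_{j})\prod_{j}\Gamma(\beta_{j})=\Gamma(\alpha+\beta)\cdot\Gamma(\alpha+\beta)^{k-1}$; together with the factor $1/2$ in (\ref{B1}) this reproduces the prefactor of (\ref{mv1}). At this stage each summand reads $e^{b_{0}}$ times a simplex integral whose exponential is $e^{\sum_{j=1}^{k-1}(b_{j}-b_{0})t_{j}}$ and whose weight is $(1-\sum_{j=1}^{k-1}t_{j})^{\alpha+\beta-1}\prod_{j=1}^{k-1}t_{j}^{\alpha+\beta-1}$.

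The key simplification is then to absorb the $e^{b_{0}}$ prefactor into the integrand. Setting $t_{0}=1-\sum_{j=1}^{k-1}t_{j}$ as in the statement, the weight $(1-\sum_{j=1}^{k-1}t_{j})^{\alpha+\beta-1}$ becomes $t_{0}^{\alpha+\beta-1}$, extending the product to $\prod_{j=0}^{k-1}t_{j}^{\alpha+\beta-1}$, while
\[
e^{b_{0}}\,e^{\sum_{j=1}^{k-1}(b_{j}-b_{0})t_{j}}=e^{b_{0}(1-\sum_{j=1}^{k-1}t_{j})+\sum_{j=1}^{k-1}b_{j}t_{j}}=e^{\sum_{j=0}^{k-1}b_{j}t_{j}}.
\]
It remains to identify the coefficients $b_{j}$ in the two cases. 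For the $+$ term one has $\xi=\xi_{u,v}(k\phi_{1},k\phi_{2})$, so $q=q_{u,v}(k\phi_{1},k\phi_{2})$ and $b_{j}=|zw|\cos((q-2j\pi)/k)=a_{j}^{+}$. For the $-$ term the argument $-\xi_{u,v}(k\phi_{1},k\phi_{2})$ gives, via $\arccos(-x)=\pi-\arccos(x)$, the value $q=\pi-q_{u,v}(k\phi_{1},k\phi_{2})$, whence $b_{j}=|zw|\cos((\pi-q_{u,v}(k\phi_{1},k\phi_{2})-2j\pi)/k)=a_{j}^{-}$. Substituting these back into (\ref{B1}) yields precisely (\ref{mv1}).

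I expect the only genuinely delicate point to be the justification for applying (\ref{pi1}) — that is, recognising that the second identity in (\ref{f2}) is the correct form whose parameters meet the positivity conditions — together with the bookkeeping that turns the $e^{b_{0}}$ factor and the simplex weight into the symmetric product over $j=0,\ldots,k-1$. Once these are handled, the remainder is direct substitution and a matching of constants.
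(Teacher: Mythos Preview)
Your proposal is correct and follows exactly the route indicated by the paper: the paper's proof is essentially the one-line statement ``combine the Humbert function expression (\ref{f2}) with the integral representation (\ref{pi1})'', and you have carried out precisely this combination, including the observation (matching the remark after Lemma~\ref{hl1}) that the second identity in (\ref{f2}) is needed for the positivity hypotheses of (\ref{pi1}). The constant computation, the absorption of $e^{b_{0}}$ via $t_{0}=1-\sum_{j=1}^{k-1}t_{j}$, and the identification of $a_{j}^{\pm}$ through $\arccos(-x)=\pi-\arccos(x)$ are all correct.
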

\begin{remark} The integral over the simplex in (\ref{mv1}) is the integral expression of $f_{2k, \alpha+\beta}(|zw|, \xi_{u,v}(k\phi_{1}, k\phi_{2}), 1 )$. Although many variables $a_{j}^{\pm}$ appear, it is a function in the variables  $\xi_{u,v}(k\phi_{1}, k\phi_{2})$ and $|zw|$ by its series expansion, see also a direct verification  in \cite{DD1}. This further implies that $\mathcal{J}_{\kappa}(z, w)$ is a function in the variables $|zw|, \frac{{\rm Re}(z^{k}){\rm Re}({w}^{k})}{|zw|^{k}} $ and  $ \frac{{\rm Im} (z^{k}) {\rm Im} ({w}^{k})}{|zw|^{k}} $.
\end{remark}

It is known that the generalized Bessel function is the reproducing kernel of the invariant polynomials under $[\cdot, \cdot]_{\kappa}$.  By the series expansion of  $\mathcal{J}_{\kappa}(z, w)$, we obtain the following integral expressions for the reproducing kernels.
\begin{corollary} Let $p_{n}(z)$ be a  homogenous polynomial of degree $n$ invariant under the action of $I_{2k}, k\ge 2$, i.e. $p_{n}(z)=p_{n}(g\cdot z), $ for any $g\in I_{2k}$. Then the reproducing kernel for $p_{n}(z)$ is given by
\begin{eqnarray*}&&\mathcal{J}^{(n)}_{\kappa}(z, w )\\&=&\frac{\Gamma{(k(\alpha+\beta))}}{2\Gamma(n+1)\Gamma(\alpha+\beta)^{k}}
\int_{-1}^{1}\int_{-1}^{1}\int_{T^{k-1}}\left(\left(\sum_{j=0}^{k-1}a_{j}^{+}t_{j}\right)^{n}
+\left(\sum_{j=0}^{k-1}a_{j}^{-}t_{j}\right)^{n}\right) \nonumber\\&&\times\prod_{j=0}^{k-1}t_{j}^{\alpha+\beta-1}dt_{1}\ldots dt_{k-1}d\nu^{\alpha}(u)d\nu^{\beta}(v),
\end{eqnarray*}
and satisfies $p_{n}(z)=\left[p_{n}(w), \mathcal{J}^{(n)}_{\kappa}(z, w)\right]_{\kappa}$.
\end{corollary}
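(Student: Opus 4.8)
The plan is to obtain the formula by extracting the bihomogeneous part of bidegree $(n,n)$ from the integral representation (\ref{mv1}) of the full Bessel function, and then to identify that part with the reproducing kernel for degree-$n$ invariant polynomials using the grading of the Fischer form. First I would recall the fact quoted just above the statement, namely that $\mathcal{J}_{\kappa}(z,w)$ is the reproducing kernel of the $I_{2k}$-invariant polynomials with respect to $[\cdot,\cdot]_{\kappa}$, so that $p(z)=[p(w),\mathcal{J}_{\kappa}(z,w)]_{\kappa}$ for every invariant $p$. Choosing the orthonormal basis $\{\varphi_{\nu}\}$ in (\ref{ks1}) to consist of homogeneous polynomials (possible since the Fischer form respects the grading), the Dunkl kernel and hence the Bessel function decompose as $\mathcal{J}_{\kappa}(z,w)=\sum_{n\ge 0}\mathcal{J}^{(n)}_{\kappa}(z,w)$, where $\mathcal{J}^{(n)}_{\kappa}$ is bihomogeneous of degree $(n,n)$ in $(z,w)$.

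Since each Dunkl operator $T_{j}$ is homogeneous of degree $-1$, the form $[\cdot,\cdot]_{\kappa}$ pairs two homogeneous polynomials trivially unless their degrees coincide. Consequently, for an invariant $p_{n}$ of degree $n$ only the matching-degree term survives in $[p_{n}(w),\mathcal{J}_{\kappa}(z,w)]_{\kappa}=\sum_{m}[p_{n}(w),\mathcal{J}^{(m)}_{\kappa}(z,w)]_{\kappa}$, giving $p_{n}(z)=[p_{n}(w),\mathcal{J}^{(n)}_{\kappa}(z,w)]_{\kappa}$. This already proves that $\mathcal{J}^{(n)}_{\kappa}$ is the claimed reproducing kernel; it remains only to make it explicit.

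To do so I would Taylor-expand the two exponentials in (\ref{mv1}), writing $e^{\sum_{j}a_{j}^{\pm}t_{j}}=\sum_{n\ge 0}\tfrac{1}{n!}\bigl(\sum_{j}a_{j}^{\pm}t_{j}\bigr)^{n}$, and integrate term by term over the simplex $T^{k-1}$ and the two beta measures. Because each $a_{j}^{\pm}$ is proportional to $|zw|$ while the angular data $q_{u,v}(k\phi_{1},k\phi_{2})$ are scale-invariant, the $n$-th summand is homogeneous of degree $n$ in $|z|$ and in $|w|$ separately, hence of bidegree $(n,n)$; by uniqueness of the homogeneous decomposition it must coincide with $\mathcal{J}^{(n)}_{\kappa}$. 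Collecting the Taylor factor $1/n!=1/\Gamma(n+1)$ reproduces exactly the constant $\frac{\Gamma(k(\alpha+\beta))}{2\Gamma(n+1)\Gamma(\alpha+\beta)^{k}}$ and the integrand $\bigl(\sum_{j}a_{j}^{+}t_{j}\bigr)^{n}+\bigl(\sum_{j}a_{j}^{-}t_{j}\bigr)^{n}$ of the statement.

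The only technical point requiring care is the interchange of summation and integration. This follows from the absolute and locally uniform convergence of the exponential series on the compact domain $T^{k-1}\times[-1,1]^{2}$, together with the finiteness of the beta measures $d\nu^{\alpha}$, $d\nu^{\beta}$ and the integrability of the simplex weight $\prod_{j}t_{j}^{\alpha+\beta-1}$ for $\alpha+\beta>0$, which permits dominated convergence. I expect this interchange and the subsequent matching of degree-$n$ pieces against the abstract decomposition $\mathcal{J}_{\kappa}=\sum_{n}\mathcal{J}^{(n)}_{\kappa}$ to be the main (though routine) obstacle; once it is granted, both the explicit formula and the reproducing identity follow immediately from the grading argument.
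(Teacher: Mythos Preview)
Your proposal is correct and follows essentially the same approach as the paper: the paper simply states that the corollary is obtained ``by the series expansion of $\mathcal{J}_{\kappa}(z,w)$,'' which is precisely your Taylor expansion of the exponentials in (\ref{mv1}) combined with the grading of the Fischer form. You have supplied the routine justifications (homogeneity of $a_{j}^{\pm}$, orthogonality of distinct degrees under $[\cdot,\cdot]_{\kappa}$, and the dominated-convergence interchange) that the paper leaves implicit.
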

\begin{remark} The polynomials invariant under the group $I_{k}$ form an algebra.   This algebra is generated by $|z|^{2}$ and $\frac{z^{k}+\overline{z^{k}}}{2}$ (the Chevalley generators). As a linear space, the dimension of the invariant polynomials of degree $n$ can be determined by  Molien's generating function, see \cite{GB2}.
\end{remark}

The integral expression (\ref{mv1}) looks quite complicated, however, it reflects how the dihedral group $I_{2k}$ acts. Due to nature of the reduction method, it will be seen  in the following theorem that the expression is a composition of two operators, corresponding to the reflection and rotations in dihedral groups. Alternatively, by this expression, it is seen that $\mathcal{J}_{\kappa}(z, w)$ is $I_{2k}$ invariant.

\begin{theorem} \label{bess1} The generalized Bessel function $\mathcal{J}_{\kappa}(z, w )$ associated to $I_{2k}, k\ge 2$ is given by
\begin{eqnarray}\label{ba1}\mathcal{J}_{\kappa}(z, w)&=&\int_{-1}^{1}\int_{-1}^{1}J(w, |z|, s_{1}v, s_{2}u)d\nu^{\alpha}(u)d\nu^{\beta}(v)
\end{eqnarray}
with $s_{1}=\cos(k\phi_{1}), s_{2}=\sin(k\phi_{1})$ and \begin{eqnarray*}J(w, |z|, s_{1}, s_{2})&=&c_{\kappa, k}
\int_{T^{k-1}}\left(e^{\langle w, z\sum_{j=0}^{k-1}e^{-i2j\pi/k}t_{j}\rangle} +e^{\langle w, z\sum_{j=0}^{k-1}e^{i(2j-1)\pi/k}t_{j} \rangle}\right) \nonumber\\&&\times\prod_{j=0}^{k-1}t_{j}^{\alpha+\beta-1}dt_{1}\ldots dt_{k-1}\end{eqnarray*}
where  $t_{0}=1-\sum_{j=1}^{k-1}t_{j}$ and $c_{\kappa, k}=\frac{\Gamma{(k(\alpha+\beta))}}{2\Gamma(\alpha+\beta)^{k}}$.
\end{theorem}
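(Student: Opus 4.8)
The plan is to derive the factorized form (\ref{ba1}) directly from the simplex expression (\ref{mv1}) of Theorem~\ref{fir}, the point being to transfer the dependence on the auxiliary variables $u,v$ out of the exponents $\sum_{j}a_j^{\pm}t_j$ and into the last two slots of the single kernel $J$. Writing $z=|z|e^{i\phi_1}$ and $w=|w|e^{i\phi_2}$, the basic observation I would use is that for a real angle $\theta$ one has $\langle w,ze^{i\theta}\rangle=\mathrm{Re}(\bar w z e^{i\theta})=|zw|\cos(\phi_1-\phi_2+\theta)$.

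First I would record the elementary trigonometric identities that fix the shape of $J$ in the special case $u=v=1$. There $\xi_{1,1}(k\phi_1,k\phi_2)=\cos(k(\phi_1-\phi_2))$, so one may take $q_{1,1}/k=\phi_1-\phi_2$; with this choice $a_j^{+}=|zw|\cos\!\big((\phi_1-\phi_2)-2j\pi/k\big)=\langle w,ze^{-i2j\pi/k}\rangle$, while rewriting $(\pi-q_{1,1}-2j\pi)/k$ as $-\big((\phi_1-\phi_2)+(2j-1)\pi/k\big)$ gives $a_j^{-}=|zw|\cos\!\big((\phi_1-\phi_2)+(2j-1)\pi/k\big)=\langle w,ze^{i(2j-1)\pi/k}\rangle$. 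Summing against $t_j$ over $T^{k-1}$ then identifies the two exponentials in the definition of $J$ with the two exponentials of (\ref{mv1}) at $u=v=1$, and the prefactor $c_{\kappa,k}$ equals $\tfrac12\,C_\beta^{(\gamma)}$ for $\gamma=k(\alpha+\beta)$, $\beta_j=\alpha+\beta$, so that via (\ref{pi1}) each simplex integral collapses to $\tfrac12 f_{2k,\alpha+\beta}(|zw|,\pm\xi,1)$.

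The key step is to make sense of $J(w,|z|,s_1,s_2)$ for arbitrary $s_1,s_2$, not merely for $(s_1,s_2)=(\cos k\phi_1,\sin k\phi_1)$. Here I would invoke Lemma~\ref{hl1}: each simplex integral in the definition of $J$ is exactly $\tfrac{\Gamma(\alpha+\beta)^k}{\Gamma(k(\alpha+\beta))}f_{2k,\alpha+\beta}(|zw|,\pm\,\xi,1)$, and by the Gegenbauer series defining $f_{2k,\lambda}$ this is a genuine function of the two scalars $|zw|$ and $\xi$ alone. Since at $u=v=1$ the relevant value is $\xi=\cos k\phi_1\cos k\phi_2+\sin k\phi_1\sin k\phi_2=s_1\cos k\phi_2+s_2\sin k\phi_2$, it follows that $J(w,|z|,s_1,s_2)$ depends on $(s_1,s_2)$ only through the linear form $\eta:=s_1\cos k\phi_2+s_2\sin k\phi_2$, which is precisely what legitimizes treating $s_1,s_2$ as free variables; concretely $J(w,|z|,s_1,s_2)=\tfrac12\big(f_{2k,\alpha+\beta}(|zw|,\eta,1)+f_{2k,\alpha+\beta}(|zw|,-\eta,1)\big)$.

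Finally I would substitute $s_1=\cos k\phi_1\mapsto v\cos k\phi_1$ and $s_2=\sin k\phi_1\mapsto u\sin k\phi_1$, i.e. evaluate $J(w,|z|,s_1 v,s_2 u)$; then $\eta$ becomes $v\cos k\phi_1\cos k\phi_2+u\sin k\phi_1\sin k\phi_2=\xi_{u,v}(k\phi_1,k\phi_2)$, so that $J(w,|z|,s_1 v,s_2 u)$ coincides with the inner $(u,v)$-integrand of (\ref{B1}), equivalently of (\ref{mv1}). Integrating this identity against $d\nu^\alpha(u)\,d\nu^\beta(v)$ over $[-1,1]^2$ and comparing with (\ref{B1}) yields (\ref{ba1}); the two $u,v$-integrations are exactly the rank-one ($\mathbb{Z}_2$) intertwining operators, while the simplex integral with the roots of unity $e^{-i2j\pi/k}$ and $e^{i(2j-1)\pi/k}$ encodes the rotations and the reflection of $I_{2k}$, which also renders the claimed $I_{2k}$-invariance transparent. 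The main obstacle is precisely the well-definedness in the middle step: termwise the exponents in $J$ genuinely depend on $\phi_1$ rather than only on $k\phi_1$, so the substitution $(s_1,s_2)\mapsto(s_1 v,s_2 u)$ is meaningful only after the simplex integration collapses the dependence to $|zw|$ and $\xi$. This collapse is supplied by Lemma~\ref{hl1} and must be cited with care, as must the harmless branch ambiguity in $q=\arccos\xi$, which disappears because $f_{2k,\lambda}$ sees $\xi=\cos q$ only.
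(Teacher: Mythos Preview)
Your proposal is correct and follows essentially the same route as the paper: both proofs compute $a_j^{\pm}$ at $u=v=1$ to obtain the inner-product form of $J$, then use that the simplex integral equals $\tfrac12\big(f_{2k,\alpha+\beta}(|zw|,\xi,1)+f_{2k,\alpha+\beta}(|zw|,-\xi,1)\big)$, a function of $|zw|$ and $\xi$ alone, so that the substitution $(s_1,s_2)\mapsto(s_1v,s_2u)$ replaces $\xi_{1,1}$ by $\xi_{u,v}$ and (\ref{B1}) yields (\ref{ba1}). Your explicit discussion of why the substitution is only meaningful \emph{after} the simplex integration, and of the harmless $\arccos$ branch, is a welcome clarification of points the paper leaves implicit.
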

\begin{proof} Note that the series $f_{2k, \lambda}(b, \xi, 1)$ is a function in variables $b$ and $\xi$. Hence, the function $J(w, |z|, s_{1}, s_{2})$ is a function in the variables $|zw|$ and $\xi_{1,1}(k\phi_{1}, k\phi_{2})$. The integrand  \begin{eqnarray*}J(w, |z|, s_{1}v, s_{2}u)&=&\frac{1}{2}\left[f_{2k, \lambda}(|zw|, \xi_{u,v}(k\phi_{1}, k\phi_{2}), 1)\right.\\&&+\left.f_{2k, \lambda}(|zw|, -\xi_{u,v}(k\phi_{1}, k\phi_{2}), 1)\right]\end{eqnarray*}
is a function in the variables $|zw|$ and $\xi_{u, v}(k\phi_{1}, k\phi_{2})$. The latter one can be obtained by  replacing $\xi_{1,1}(k\phi_{1}, k\phi_{2})$ in the former one by \[\xi_{u,v}(k\phi_{1}, k\phi_{2})=v\cos (k\phi_{1})\cos(k\phi_{2})+u\sin(k\phi_{1})\sin(k\phi_{2}).\] This fact and expressions (\ref{B1}) and  (\ref{mv1}) lead to the first identity.

 In the following, we simplify the function  \begin{eqnarray*}J(w, |z|, s_{1}, s_{2})&=&\frac{1}{2}(f_{2k, \lambda}(|z||w|, \xi_{1,1}(k\phi_{1}, k\phi_{2}), 1)\\&&+f_{2k, \lambda}(|z||w|, -\xi_{1,1}(k\phi_{1}, k\phi_{2}), 1)).\end{eqnarray*} It is seen that
\begin{eqnarray*}q_{1,1}(k\phi_{1}, k\phi_{2})&=&\arccos(\xi_{1,1}(k\phi_{1}, k\phi_{2}))\\&=&\arccos(\cos(k\phi_{1})\cos(k\phi_{2})+\sin(k\phi_{1})\sin(k\phi_{2}))\\
&=&k(\phi_{1}-\phi_{2}).
\end{eqnarray*}
This yields that
  \begin{eqnarray*}a_{j}^{+}&=&|zw|\cos\left(\frac{q_{1,1}(k\phi_{1}, k\phi_{2})-2j\pi}{k}\right)=|zw|\cos\left((\phi_{1}-\phi_{2})-\frac{2j\pi}{k}\right),\\
 a_{j}^{-}&=&|zw|\cos\left(\frac{\pi-2j\pi-q_{1,1}(k\phi_{1}, k\phi_{2})}{k}\right)=|zw|\cos\left(\frac{\pi-2j\pi}{k}-(\phi_{1}-\phi_{2})\right). \end{eqnarray*}
In other words, we have
 \begin{eqnarray*}a_{j}^{+}&=&{\rm Re}(w\overline{z}e^{j2i\pi/k})
=\langle w, ze^{-i2j\pi/k} \rangle,
 \\
 a_{j}^{-}&=& {\rm Re}(w\overline{ze^{i(2j-1)\pi/k}})=\left\langle w, ze^{i(2j-1)\pi/k} \right\rangle,\end{eqnarray*}
 where $\langle z, w\rangle={\rm Re} (z\overline{w})$ denote the usual Euclidean inner product for $z, w \in \mathbb{C}\cong \mathbb{R}^{2}$.  Hence for this special case $u=v=1$, the integrand of (\ref{mv1}) becomes
\begin{eqnarray}\label{fa2}&&e^{\sum_{j=0}^{k-1}a_{j}^{+}t_{j}}+e^{\sum_{j=0}^{k-1}a_{j}^{-}t_{j}}\\&=&e^{\langle w, z\sum_{j=0}^{k-1}e^{-i2j\pi/k}t_{j}\rangle} +e^{\langle w, z\sum_{j=0}^{k-1}e^{i(2j-1)\pi/k}t_{j} \rangle}.\nonumber\end{eqnarray}
The second formula is obtained.
\end{proof}

At the end of this subsection, we express $a_{j}^{\pm}$ by  Cartesian coordinates instead of the polar coordinates expression (\ref{fa2}).  We denote $(\sqrt[k]{z})_{j}, 0\le j\le k-1$ the $ k$ different $k$-th roots of $z$.

\begin{theorem} \label{bess} For $k\ge 2$,  the generalized Bessel function associated to the dihedral group $I_{2k}$ is given by
\begin{eqnarray*}\mathcal{J}_{\kappa}(z, w )&=&\frac{\Gamma{(k(\alpha+\beta))}}{2\Gamma(\alpha+\beta)^{k}}
\int_{-1}^{1}\int_{-1}^{1}\int_{T^{k-1}}h_{u,v, t_{0},\ldots, t_{k-1}}(z, w) \nonumber\\&&\times\prod_{j=0}^{k-1}t_{j}^{\alpha+\beta-1}dt_{1}\ldots dt_{k-1}d\nu^{\alpha}(u)d\nu^{\beta}(v)
\end{eqnarray*}
where  \begin{eqnarray*}&&h_{u,v, t_{0},\ldots, t_{k-1}}(z, w)\\&=&{\rm exp}\left(\sum_{j=0}^{k-1} t_{j}{\rm Re}\left(\sqrt[k]{{\rm Re}(\tilde{z}^{k}\overline{w^{k}})+i\sqrt{|zw|^{2k}-({\rm Re}(\tilde{z}^{k}\overline{w^{k}}) )^{2}}}\right)_{j} \right)\\
&&+{\rm exp}\left(\sum_{j=0}^{k-1} t_{j}{\rm Re}\left(\sqrt[k]{-{\rm Re}(\tilde{z}^{k}\overline{w^{k}})+i\sqrt{|zw|^{2k}-({\rm Re}(\tilde{z}^{k}\overline{w^{k}}) )^{2}}}\right)_{j} \right)
\end{eqnarray*}
here  $\tilde{z}^{k}=v{\rm Re} (z^{k})+ui {\rm Im} (z^{k}) $, $t_{0}=1-\sum_{j=1}^{k-1}t_{j}$ and
 $\sqrt{|zw|^{2k}-({\rm Re}(\tilde{z}^{k}\overline{w^{k}}) )^{2}}$ is the positive root.
\end{theorem}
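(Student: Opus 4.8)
The plan is to start from the polar-coordinate formula for $\mathcal{J}_{\kappa}(z,w)$ already established in Theorem \ref{fir} and to recast the exponents $a_{j}^{\pm}$ in Cartesian form. The essential point is that each $a_{j}^{\pm}$ is the real part of a $k$-th root of one of two explicit complex numbers, so the whole argument reduces to a single algebraic identity together with the permutation symmetry of the simplex integral.

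First I would record the link between the Cartesian data and $\xi_{u,v}$. With $z=|z|e^{i\phi_{1}}$ and $w=|w|e^{i\phi_{2}}$ one has ${\rm Re}(z^{k})=|z|^{k}\cos(k\phi_{1})$ and ${\rm Im}(z^{k})=|z|^{k}\sin(k\phi_{1})$, so substituting $\tilde{z}^{k}=v\,{\rm Re}(z^{k})+ui\,{\rm Im}(z^{k})$ and expanding $\tilde{z}^{k}\overline{w^{k}}$ yields
\[{\rm Re}(\tilde{z}^{k}\overline{w^{k}})=|zw|^{k}\bigl(v\cos(k\phi_{1})\cos(k\phi_{2})+u\sin(k\phi_{1})\sin(k\phi_{2})\bigr)=|zw|^{k}\,\xi_{u,v}(k\phi_{1},k\phi_{2}).\]
Since $|u|,|v|\le 1$ we have $|\tilde{z}^{k}|\le|z|^{k}$, whence $|{\rm Re}(\tilde{z}^{k}\overline{w^{k}})|\le|zw|^{k}$ and the positive root $\sqrt{|zw|^{2k}-({\rm Re}(\tilde{z}^{k}\overline{w^{k}}))^{2}}$ is real, equal to $|zw|^{k}\sin q_{u,v}$, where $q_{u,v}=\arccos\xi_{u,v}(k\phi_{1},k\phi_{2})\in[0,\pi]$. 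Hence the two complex numbers inside the $k$-th roots in $h_{u,v,t_{0},\ldots,t_{k-1}}$ equal $|zw|^{k}e^{iq_{u,v}}$ and, using $-\cos q+i\sin q=\cos(\pi-q)+i\sin(\pi-q)$, $|zw|^{k}e^{i(\pi-q_{u,v})}$ respectively.

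Next I would take $k$-th roots. The roots of $|zw|^{k}e^{iq_{u,v}}$ are $|zw|\,e^{i(q_{u,v}+2\pi j)/k}$ and those of $|zw|^{k}e^{i(\pi-q_{u,v})}$ are $|zw|\,e^{i(\pi-q_{u,v}+2\pi j)/k}$, $j=0,\ldots,k-1$, so their real parts are $|zw|\cos\!\bigl(\tfrac{q_{u,v}+2\pi j}{k}\bigr)$ and $|zw|\cos\!\bigl(\tfrac{\pi-q_{u,v}+2\pi j}{k}\bigr)$. Comparing with the definitions $a_{j}^{+}=|zw|\cos\!\bigl(\tfrac{q_{u,v}-2j\pi}{k}\bigr)$ and $a_{j}^{-}=|zw|\cos\!\bigl(\tfrac{\pi-q_{u,v}-2j\pi}{k}\bigr)$ of Theorem \ref{fir}, the substitution $j\mapsto k-j$ together with the $2\pi$-periodicity and evenness of cosine turns $+2\pi j$ into $-2\pi j$; hence $\{{\rm Re}(\sqrt[k]{\cdot})_{j}\}_{j}$ coincides with $\{a_{j}^{\pm}\}_{j}$ as multisets.

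Finally, because the weight $\prod_{j=0}^{k-1}t_{j}^{\alpha+\beta-1}$ and the domain $T^{k-1}$ (with $t_{0}=1-\sum_{j\ge 1}t_{j}$) are invariant under permuting the indices $j$, this relabeling does not change the value of the integral, so $\sum_{j}a_{j}^{\pm}t_{j}$ may be matched with $\sum_{j}{\rm Re}(\sqrt[k]{\cdot})_{j}t_{j}$. Thus the integrand $e^{\sum_{j}a_{j}^{+}t_{j}}+e^{\sum_{j}a_{j}^{-}t_{j}}$ of Theorem \ref{fir} equals $h_{u,v,t_{0},\ldots,t_{k-1}}(z,w)$, and the stated formula follows. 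I expect the only delicate point to be this index bookkeeping, namely checking that the real parts of the $k$-th roots reproduce precisely the multiset $\{a_{j}^{\pm}\}$ and that the symmetry of the simplex integral legitimizes the relabeling; the remaining steps are a routine polar-to-Cartesian computation.
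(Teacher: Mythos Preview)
Your proposal is correct and follows essentially the same approach as the paper: both start from the simplex formula of Theorem \ref{fir} and identify the numbers $a_{j}^{\pm}$ as the real parts of the $k$-th roots of ${\rm Re}(\tilde{z}^{k}\overline{w^{k}})\pm i\sqrt{|zw|^{2k}-({\rm Re}(\tilde{z}^{k}\overline{w^{k}}))^{2}}$ via the relation ${\rm Re}(\tilde{z}^{k}\overline{w^{k}})=|zw|^{k}\xi_{u,v}(k\phi_{1},k\phi_{2})$. Your treatment is in fact slightly more explicit than the paper's, which simply asserts that the $a_{j}^{+}$ are the real parts of the $k$-th roots without discussing the $\pm 2\pi j$ labeling; your observation that the permutation symmetry of the simplex weight absorbs any relabeling makes this step precise.
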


\begin{proof} We only need to express $
e^{\sum_{j=0}^{k-1}a_{j}^{+}t_{j}}+e^{\sum_{j=0}^{k-1}a_{j}^{-}t_{j}}
$
in the integrand of (\ref{mv1}) by  Cartesian coordinates. It reduces to express $a_{j}^{\pm}$ by $z,w$ instead of the angles $q_{u,v}(k\phi_{1}, k\phi_{2})$.  In fact, it is seen that
\[a_{j}^{+}=|zw|\cos\left(\frac{q_{u, v}(k\phi_{1},  k\phi_{2})-2j\pi}{k}\right), 0\le j\le k-1\] are the real part of the $k$-roots of ${\rm Re}(\tilde{z}^{k}\overline{w^{k}})+i\sqrt{|zw|^{2k}-({\rm Re}(\tilde{z}^{k}\overline{w^{k}}) )^{2}}$ where $\tilde{z}^{k}=v{\rm Re} (z^{k})+i u{\rm Im} (z^{k}) $.
Hence,  we have
\begin{eqnarray*}e^{\sum_{j=0}^{k-1}a_{j}^{+}t_{j}}={\rm exp}\left(\sum_{j=0}^{k-1} t_{j}{\rm Re}\left(\sqrt[k]{{\rm Re}(\tilde{z}^{k}\overline{w^{k}})+i\sqrt{|zw|^{2k}-({\rm Re}(\tilde{z}^{k}\overline{w^{k}}) )^{2}}}\right)_{j} \right).
\end{eqnarray*}
The expression for $a_{j}^{-}$ is obtained similarly.
\end{proof}
\begin{remark} The result will not be changed if we choose $\sqrt{|zw|^{2k}-({\rm Re}(\tilde{z}^{k}\overline{w^{k}}) )^{2}}$ as the negative square root. When $u=v=1$, the $k$-th roots of ${\rm Re}(\tilde{z}^{k}\overline{w^{k}})+i\sqrt{|zw|^{2k}-({\rm Re}(\tilde{z}^{k}\overline{w^{k}}) )^{2}}$ becomes $\sqrt[k]{z^{k}\overline{w^{k}}}$. Hence, it reduces to  the formula (\ref{fa2}) \begin{eqnarray*}a_{j}^{+}&=&{\rm Re}\left(\sqrt[k]{z^{k}\overline{w^{k}}}\right)_{j}
=\langle w, ze^{-i2j\pi/k} \rangle.\end{eqnarray*} However, it is not possible to express $a_{j} $ as an inner product of the form $\langle w, g(z)\rangle$ for general $u$ and $v$.
\end{remark}
\begin{remark} For the odd order dihedral group $I_{k}$, the generalized Bessel function can be obtained from $I_{2k}$, see \cite{Dn}.
\end{remark}

\subsection{The Dunkl kernel}
In this section, we give two kinds of expressions of the Dunkl kernel  based on the Laplace domain result obtained in \cite{CDL}.

 We still identify $\mathbb{R}^{2}$ with the complex plane $\mathbb{C}$, and denote $z=|z|e^{i\phi_{1}}$, $w=|w|e^{i\phi_{2}}$.
The Laplace domain result is obtained by introducing an auxiliary variable in the series expansion of the Dunkl kernel and then taking the Laplace transform. More precisely, the series is
  \begin{eqnarray*}
E_{\kappa}(z,w, t)=\frac{2^{\alpha+\beta}\Gamma(k(\alpha+\beta)+1)}{|zw|^{k(\alpha+\beta)}}
\sum_{j=0}^{\infty}\mathcal{I}_{j+k(\alpha+\beta)}(|zw|t) P_{j}\left(I_{k}; e^{i\phi_{1}}, e^{i\phi_{2}}\right),
\end{eqnarray*}
where  $P_{j}(I_{k}; e^{i\phi_{1}}, e^{i\phi_{2}})$ is the reproducing kernel of the Dunkl harmonics of degree $j$ and $\mathcal{I}_{j}$ is the modified Bessel function of first kind, see \cite{SKO} for the series expansion of the general Dunkl kernel.
It is proved that the above infinite series admits a closed expression in the Laplace domain for the dihedral groups.
\begin{theorem}\cite{CDL}\label{el1} For the even dihedral group $I_{2k}$, the Laplace transform of the Dunkl kernel $E_{\kappa}(z, w, t)$  with respect to $t$ is given by
\begin{eqnarray*}
\mathcal{L}(E_{\kappa}(z, w,t))&=&\Gamma(k(\alpha+\beta) +1)
\int_{-1}^{1}\int_{-1}^{1}f_{I_{2k}}(s, z, w)d\mu^{\alpha}(u)d\mu^{\beta}(v)
\end{eqnarray*}
where
\begin{eqnarray*}
f_{I_{2k}}(s, z, w)=\frac{2^{k(\alpha+\beta)}}{(s-{\rm Re}(z\overline{w}))}\frac{(s+S)^{k}-2{\rm Re}(z^{k}\overline{w^{k}})+(s-S)^{k}}{((s+S)^{k}-2|zw|^{k}\xi_{u,v}(k\phi_{1}, k\phi_{2})+(s-S)^{k})^{\alpha+\beta+1}}
\end{eqnarray*}
with $S=\sqrt{s^{2}-|zw|^{2}}$ and  \[d\mu^{\gamma}(\omega)=\frac{\Gamma(\gamma+1/2)}{\Gamma(1/2)\Gamma( \gamma)}(1+\omega)(1-\omega^{2})^{\gamma-1}d\omega.\]
\end{theorem}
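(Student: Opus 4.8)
The plan is to start from the series representation of the auxiliary kernel $E_{\kappa}(z,w,t)$ displayed just before the statement and to compute its Laplace transform term by term, thereby reducing the whole problem to summing a Poisson-type generating function of the reproducing kernels $P_{j}(I_{k};e^{i\phi_{1}},e^{i\phi_{2}})$. Since the only $t$-dependence in each summand sits in the modified Bessel function $\mathcal{I}_{j+k(\alpha+\beta)}(|zw|t)$, I would first apply the classical transform
\[\mathcal{L}\bigl[\mathcal{I}_{\nu}(at)\bigr](s)=\frac{(s-\sqrt{s^{2}-a^{2}})^{\nu}}{a^{\nu}\sqrt{s^{2}-a^{2}}},\qquad \mathrm{Re}\,s>a,\ \nu>-1,\]
with $a=|zw|$, $\nu=j+k(\alpha+\beta)$ and $S=\sqrt{s^{2}-|zw|^{2}}$. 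This turns the series into
\[\mathcal{L}(E_{\kappa}(z,w,t))(s)=\frac{2^{\alpha+\beta}\Gamma(k(\alpha+\beta)+1)\,(s-S)^{k(\alpha+\beta)}}{|zw|^{2k(\alpha+\beta)}\,S}\sum_{j=0}^{\infty}r^{j}\,P_{j}\bigl(I_{k};e^{i\phi_{1}},e^{i\phi_{2}}\bigr),\]
where $r=(s-S)/|zw|$. The interchange of summation and integration is justified for $\mathrm{Re}\,s$ large by the uniform estimate $\mathcal{I}_{\nu}(x)\le (x/2)^{\nu}e^{x}/\Gamma(\nu+1)$ together with the polynomial bound on the reproducing kernels $P_{j}$ on the unit circle; the final identity then extends by analytic continuation in $s$, noting that $r\to0$ as $s\to\infty$.

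The task is now reduced to a closed form for the intertwined Poisson kernel $\Pi(r):=\sum_{j\ge0}r^{j}P_{j}(I_{k};e^{i\phi_{1}},e^{i\phi_{2}})$. Here I would invoke the reduction method of Section~7.6 in \cite{DX} together with the Poisson kernel computed in \cite{D}: through the substitution $z\mapsto z^{k}$ the group $I_{2k}$ is reduced to the product $\mathbb{Z}_{2}^{2}$ acting on the plane of $z^{k}$, one factor reflecting the real axis and the other the imaginary axis, with parameters $\beta$ and $\alpha$ respectively. Because the reproducing kernel of $\mathbb{Z}_{2}^{2}$ factorizes, its intertwining operator is a product of two rank-one $\mathbb{Z}_{2}$ intertwiners of the form (\ref{kk1}); these produce exactly the two integrations $d\mu^{\alpha}(u)$, $d\mu^{\beta}(v)$, the non-symmetric weights $(1-\omega)^{\gamma-1}(1+\omega)^{\gamma}$ being precisely the normalized rank-one intertwining measures. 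Summing the classical Gegenbauer generating function under these two intertwiners yields $\Pi(r)$ as a double integral over $[-1,1]^{2}$ whose denominator carries the intertwined argument $\xi_{u,v}(k\phi_{1},k\phi_{2})$ through the factor $1-2r^{k}\xi_{u,v}(k\phi_{1},k\phi_{2})+r^{2k}$, while the non-symmetrized numerator and an overall prefactor retain the genuine inner products $\mathrm{Re}(z^{k}\overline{w^{k}})$ and $\mathrm{Re}(z\overline{w})$.

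Finally I would translate everything back to the variable $s$. From $(s-S)(s+S)=|zw|^{2}$ one gets $|zw|\,r=s-S$ and $|zw|/r=s+S$, hence
\[(s-S)^{k}+(s+S)^{k}=|zw|^{k}\bigl(r^{k}+r^{-k}\bigr),\qquad (s+S)^{k}-2|zw|^{k}\xi+(s-S)^{k}=|zw|^{k}r^{-k}\bigl(1-2r^{k}\xi+r^{2k}\bigr).\]
Substituting these relations into $\Pi(r)$, together with the analogous order-one identity $s-\mathrm{Re}(z\overline{w})=\tfrac{|zw|}{2r}\bigl(1-2r\cos(\phi_{1}-\phi_{2})+r^{2}\bigr)$ and the factorization (\ref{fa1}), collecting the powers of $2$, $|zw|$ and $r$, and simplifying the Gamma factors, reproduces the stated expression for $f_{I_{2k}}(s,z,w)$ and hence the theorem.

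The hard part will be the middle step, namely obtaining the closed form of the non-symmetrized Poisson kernel $\Pi(r)$. The delicate bookkeeping is to carry the reduction substitution $z\mapsto z^{k}$ correctly and to explain why the denominator acquires the intertwined argument $\xi_{u,v}(k\phi_{1},k\phi_{2})$ while the numerator and the prefactor keep the true inner products $\mathrm{Re}(z^{k}\overline{w^{k}})$ and $\mathrm{Re}(z\overline{w})$; the symmetrized analogue of Lemma~\ref{hl1} is comparatively easier because there the numerator collapses to $(s+S)^{k}-(s-S)^{k}$ and the prefactor disappears.
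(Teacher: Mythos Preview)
The paper does not prove this theorem: it is quoted verbatim from \cite{CDL}, as the citation in the theorem header indicates, and no argument is supplied here. Your outline is precisely the strategy that the paper itself attributes to \cite{CDL} --- take the Laplace transform of the series expansion term by term via the classical formula for $\mathcal{L}[\mathcal{I}_{\nu}(at)]$, recognise the resulting sum as the (intertwined) Poisson kernel $\sum_{j}r^{j}P_{j}(I_{k};\cdot,\cdot)$ with $r=(s-S)/|zw|$, evaluate that Poisson kernel in closed form using Dunkl's result \cite{D} together with the reduction of $I_{2k}$ to $\mathbb{Z}_{2}^{2}$ (Section~7.6 of \cite{DX}), and then rewrite in the $s$ variable using $(s-S)(s+S)=|zw|^{2}$.

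Your identification of the ``hard part'' is accurate: the non-symmetrized Poisson kernel is where all the work lies, and you have the mechanism right --- the two rank-one intertwiners (\ref{kk1}) produce exactly the measures $d\mu^{\alpha}$, $d\mu^{\beta}$, the intertwined variable $\xi_{u,v}(k\phi_{1},k\phi_{2})$ appears in the denominator through the Gegenbauer generating function, while the numerator and the $(s-\mathrm{Re}(z\overline{w}))^{-1}$ prefactor survive from the part of the kernel that is \emph{not} reached by the reduction substitution. So there is nothing to compare against in the present paper, and your plan is a faithful reconstruction of the original argument.
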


\begin{remark} The substitution of $x$ by $ix$ in the original formula of Theorem 12 in \cite{CDL} has been made here.  For $x, y\in \mathbb{R}^{2},$  the Dunkl kernel studied in \cite{CDL}
is in fact
\[E(-ix, y)=V_{\kappa}[e^{-i\langle \cdot, y\rangle}](x).\]
However, the Dunkl kernel studied here is
\[E(x, y)=V_{\kappa}[e^{\langle \cdot, y\rangle}](x).\]
\end{remark}
\begin{remark} The integrand $f_{I_{2k}}(s, z, w)$ can be  factored as follows, see  Lemma 3 in \cite{CDL},
 \begin{eqnarray*}
 f_{I_{2k}}(s, z, w)=
\frac{A(s,|zw|, q_{1,1}(k\phi_{1}, k\phi_{2}))}{B(s, z, w)[A(s,|zw|, q_{u,v}(k\phi_{1}, k\phi_{2}))]^{\alpha+\beta+1}},
 \end{eqnarray*}
 where $B(s, z, w)=s-{\rm Re}(z\overline{w})$ and
 \begin{eqnarray*}
A(s,|zw|, q_{u,v}(k\phi_{1}, k\phi_{2}))&=&\prod_{\ell=0}^{k-1}\biggl(s-|zw|\cos\biggl(\frac{q_{u,v}(k\phi_{1}, k\phi_{2}+2\pi \ell}{k}\biggr)\biggr).
\end{eqnarray*}

\end{remark}

Similar to the generalized Bessel function, by the Laplace transform formula (\ref{l1}), the Dunkl kernel can be expressed using the Humbert function and  integrals over the simplex.

\begin{theorem}\label{m2}
 For each dihedral group $I_{2k}$ and non-negative multiplicity function $\kappa=(\alpha, \beta)$, the Dunkl kernel is given by
\begin{eqnarray*}
E_{\kappa}(z, w)&=&
\int_{-1}^{1}\int_{-1}^{1}\left[h_{\alpha+\beta}( z, w, u, v)+\frac{2^{1-k}\Gamma(k(\alpha+\beta)+1)}{\Gamma(k(\alpha+\beta+1)+1)}|zw|^{k}\right.\\ &&\times\xi_{u-1,v-1}(k\phi_{1}, k\phi_{2}) h_{\alpha+\beta+1}( z, w, u, v)\biggr]d\mu^{\alpha}(u)d\mu^{\beta}(v),\end{eqnarray*}
where
\begin{eqnarray*}
h_{\gamma}(z, w, u, v)&=&\Phi_{2}^{(k+1)}(\gamma,\ldots, \gamma, 1; k\gamma+1; a_{0},\ldots, a_{k-1}, a_{k})\\
&=& e^{a_{k}}\Phi_{2}^{(k)}(\gamma,\ldots, \gamma; k\gamma+1; a_{0}-a_{k},\ldots, a_{k-1}-a_{k}) \end{eqnarray*}
with $a_{j}=|zw|\cos\left(\frac{q_{u,v}(k\phi_{1}, k\phi_{2})+2\pi j}{k}\right)$, $j=0,\ldots k-1$ and $a_{k}={\rm Re} \,(z\overline{w}) $.
This can be equivalently expressed as
\begin{eqnarray*}
E_{\kappa}(z, w)&=&\frac{\Gamma(k(\alpha+\beta)+1)}{\Gamma(\alpha+\beta)^{k}}
\int_{-1}^{1}\int_{-1}^{1}\int_{T^{k}}e^{\sum_{j=0}^{k}a_{j}t_{j}}
d\omega_{\alpha+\beta}d\mu^{\alpha}(u)d\mu^{\beta}(v)
\\&&+\frac{2^{1-k}\Gamma(k(\alpha+\beta)+1)}{\Gamma(\alpha+\beta+1)^{k}}
\int_{-1}^{1}\int_{-1}^{1}\int_{T^{k}}|zw|^{k} \xi_{u-1, v-1}(k\phi_{1}, k\phi_{2})\\ &&\times e^{\sum_{j=0}^{k}a_{j}t_{j}}
d\omega_{\alpha+\beta+1} d\mu^{\alpha}(u)d\mu^{\beta}(v)
\end{eqnarray*}
where $t_{k}=1-\sum_{j=0}^{k-1}t_{j}$ and $d\omega_{\nu}=\prod_{j=0}^{k-1}t_{j}^{\nu-1}dt_{0}\ldots dt_{k-1}$.
\end{theorem}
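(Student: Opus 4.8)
The plan is to recover $E_\kappa(z,w)=E_\kappa(z,w,1)$ from the closed Laplace-domain formula of Theorem~\ref{el1} by inverting the Laplace transform in the auxiliary variable $t$ and then setting $t=1$; that $E_\kappa(z,w,1)$ equals the Dunkl kernel is immediate from the series that defines $E_\kappa(z,w,t)$. The engine of the inversion is the transform pair~(\ref{l1}) read from right to left: any product $\prod_j(s-\lambda_j)^{-\beta_j}$ is, up to a Gamma constant, the Laplace transform of $t^{\,\gamma-1}\Phi_2$ evaluated with arguments $\lambda_j t$, where $\gamma$ is the sum of the $\beta_j$. Everything is arranged so that the integrand $f_{I_{2k}}(s,z,w)$ factors into linear terms $(s-a_j)$, which is exactly the content of the Remark following Theorem~\ref{el1} together with the factorization~(\ref{fa1}).

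First I would start from the factored form in that Remark,
\[ f_{I_{2k}}(s,z,w)=\frac{A(s,|zw|,q_{1,1})}{(s-a_k)\,[A(s,|zw|,q_{u,v})]^{\alpha+\beta+1}},\qquad a_k={\rm Re}(z\overline w), \]
and use $A(s,|zw|,q_{u,v})=\prod_{j=0}^{k-1}(s-a_j)$. The decisive algebraic step is to split the numerator. Since both $A(s,|zw|,q_{1,1})$ and $A(s,|zw|,q_{u,v})$ are, by (\ref{fa1}), equal to $2^{-k}[(s+S)^k-2|zw|^k\xi+(s-S)^k]$ for the respective values of $\xi$, their difference is $A(s,|zw|,q_{1,1})-A(s,|zw|,q_{u,v})=2^{1-k}|zw|^k(\xi_{u,v}-\xi_{1,1})=2^{1-k}|zw|^k\xi_{u-1,v-1}(k\phi_1,k\phi_2)$, where I used $\xi_{u,v}-\xi_{1,1}=\xi_{u-1,v-1}$. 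This writes $f_{I_{2k}}$ as a sum of two rational functions, with denominators $(s-a_k)\prod_{j}(s-a_j)^{\alpha+\beta}$ and $(s-a_k)\prod_j(s-a_j)^{\alpha+\beta+1}$. Applying (\ref{l1}) to each — with upper parameters $(\alpha+\beta,\dots,\alpha+\beta,1)$, resp.\ $(\alpha+\beta+1,\dots,\alpha+\beta+1,1)$, and lower parameter $k(\alpha+\beta)+1$, resp.\ $k(\alpha+\beta+1)+1$ — identifies the two inverse transforms at $t=1$ as $h_{\alpha+\beta}/\Gamma(k(\alpha+\beta)+1)$ and $2^{1-k}|zw|^k\xi_{u-1,v-1}\,h_{\alpha+\beta+1}/\Gamma(k(\alpha+\beta+1)+1)$. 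Multiplying by the prefactor $\Gamma(k(\alpha+\beta)+1)$ and integrating against $d\mu^\alpha(u)\,d\mu^\beta(v)$ reproduces the first form, the ratio of Gamma values in the second term appearing exactly as stated.

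To reach the simplex expression I would feed these $h_\gamma$ into the integral representation~(\ref{pi1}). The main obstacle, and the reason $h_\gamma$ is recorded in two ways in the statement, is that (\ref{pi1}) is inapplicable to $\Phi_2^{(k+1)}(\gamma,\dots,\gamma,1;k\gamma+1;\cdot)$: the lower parameter minus the sum of the upper parameters equals $(k\gamma+1)-(k\gamma+1)=0$, which fails the strict positivity required in (\ref{pi1}). I circumvent this exactly as in the Remark after Lemma~\ref{hl1}, switching to the second identity $h_\gamma=e^{a_k}\Phi_2^{(k)}(\gamma,\dots,\gamma;k\gamma+1;a_0-a_k,\dots,a_{k-1}-a_k)$, for which the same difference is $(k\gamma+1)-k\gamma=1>0$, so (\ref{pi1}) is legitimate. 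Then the slack factor $(1-\sum t_j)^{0}=1$ drops out, the constant $C^{(k\gamma+1)}_\beta$ collapses to $\Gamma(k\gamma+1)/\Gamma(\gamma)^k$, and absorbing the prefactor $e^{a_k}$ through the slack variable $t_k=1-\sum_{j=0}^{k-1}t_j$ turns $e^{a_k}\,e^{\sum_{j=0}^{k-1}(a_j-a_k)t_j}$ into $e^{\sum_{j=0}^{k}a_j t_j}$. With $\gamma=\alpha+\beta$ this yields the weight $d\omega_{\alpha+\beta}$ in the first term and, with $\gamma=\alpha+\beta+1$, the weight $d\omega_{\alpha+\beta+1}$ in the second, which is precisely the claimed simplex form. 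The only analytic point left is to interchange the inverse Laplace transform with the bounded integration in $u,v$, justified by dominated convergence together with the uniqueness of Laplace transforms.
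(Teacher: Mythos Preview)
Your proof is correct and follows essentially the same route as the paper: split $f_{I_{2k}}$ via the identity $A(s,|zw|,q_{1,1})=A(s,|zw|,q_{u,v})+2^{1-k}|zw|^{k}\xi_{u-1,v-1}$, invert each piece term-by-term using (\ref{l1}), then pass to the simplex form via (\ref{pi1}). Your additional remarks --- the explicit reason why the $(k+1)$-variable $\Phi_2$ fails the hypothesis of (\ref{pi1}) and how the $e^{a_k}$ factor is absorbed into the slack variable $t_k$ --- are helpful elaborations of points the paper leaves implicit.
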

\begin{proof}  We split $f_{I_{2k}}(s, z, w)$  into two parts
\begin{eqnarray}\label{fs}f_{I_{2k}}(s, z, w)&=&\frac{1}{B(s, z, w)[A(s,|zw|, q_{u,v}(k\phi_{1}, k\phi_{2}))]^{\alpha+\beta}}\\&&+ \frac{|zw|^{k}\xi_{u-1,v-1}(k\phi_{1}, k\phi_{2})}{2^{k-1}B(s, z, w)[A(s,|zw|, q_{u,v}(k\phi_{1}, k\phi_{2}))]^{\alpha+\beta+1}}.\nonumber \end{eqnarray}
Then the first expression follows from taking the inverse Laplace transform for each term using the $\Phi_{2}^{(m)}$ functions and then putting $t=1$.

The second formula is obtained by replacing the Humbert function by its integral expression (\ref{pi1}), which is similar to the integral expression (\ref{mv1}) for the generalized Bessel function.
\end{proof}

\begin{ex} For the dihedral group $I_{2}$, we have
\begin{eqnarray*}
E_{\kappa}(z, w)&=&
\int_{-1}^{1}\int_{-1}^{1}\left[h_{\alpha+\beta}( z, w, u, v)+\frac{|zw|\xi_{u-1,v-1}(\phi_{1}, \phi_{2})}{\alpha+\beta+1}\right.\\&&\times h_{\alpha+\beta+1}(z, w, u, v))\biggr]d\mu^{\alpha}(u)d\mu^{\beta}(v),\end{eqnarray*} where
\[h_{\alpha}(z, w, u, v))=e^{{\rm Re}\,(z\overline{w}) }
\sum_{j=0}^{\infty}\frac{(\alpha)_{j}}{(\alpha+1)_{j}}\frac{A^{j}}{j!}   \]
in which \[A=|zw|((v-1)\cos\phi_{1}\cos\phi_{2}+(u-1)\sin \phi_{1}\sin\phi_{2}  ).\] Direct computation shows that the integrand reduces to $e^{|zw|\cos(q_{u,v}(\phi_{1}, \phi_{2}))}$. Hence, the Dunkl kernel for the root system $I_{2}$ is
\begin{eqnarray*}
E_{\kappa}(z, w)&=&
\int_{-1}^{1}\int_{-1}^{1}e^{|zw| (v\cos\phi_{1}\cos\phi_{2}+u\sin\phi_{1}\sin\phi_{2})}d\mu^{\alpha}(u)d\mu^{\beta}(v)\end{eqnarray*}
which coincides with the  known results.
\end{ex}

In the following, we derive the second expression for the Dunkl kernel, which is more compact. We start by rewriting the Laplace domain expression.

\begin{theorem} \label{b12}
 For the even dihedral group $I_{2k}$, the Laplace transform of the Dunkl kernel $E_{\kappa}(z, w, t)$  with respect to $t$ is given by
\begin{eqnarray*}
&&\mathcal{L}(E_{\kappa}(z, w,t))\\&=&\Gamma(k(\alpha+\beta) +1)
\int_{-1}^{1}\int_{-1}^{1}\frac{1}{B(s, z, w)[A(s, |zw|, q_{u,v}(k\phi_{1}, k\phi_{2}))]^{\alpha+\beta}}\\
&&\times\left[(1+u)(1+v)-\frac{2}{\alpha+\beta}
(\alpha u(1+v)+\beta v(1+u))\right]d\nu^{\alpha}(u)d\nu^{\beta}(v),
\end{eqnarray*}
where $B(s, z, w)=s-{\rm Re}(z\overline{w})$ and
 \begin{eqnarray*}
A(s,|zw|, q_{u,v}(k\phi_{1}, k\phi_{2}))&=&\prod_{\ell=0}^{k-1}\biggl(s-|zw|\cos\biggl(\frac{q_{u,v}(k\phi_{1}, k\phi_{2}+2\pi \ell}{k}\biggr)\biggr).
\end{eqnarray*}
\end{theorem}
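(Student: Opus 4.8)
The plan is to begin from the Laplace-domain formula of Theorem \ref{el1} and transform the integrand $f_{I_{2k}}(s,z,w)$ term by term, lowering the exponent $\alpha+\beta+1$ to $\alpha+\beta$ and trading the measures $d\mu$ for $d\nu$. The structural fact that drives everything is the factorization (\ref{fa1}): it shows that $2^{k}A(s,|zw|,q_{u,v}(k\phi_1,k\phi_2))=(s+S)^{k}+(s-S)^{k}-2|zw|^{k}\xi_{u,v}(k\phi_1,k\phi_2)$, so $A$ depends on $(u,v)$ \emph{only} through $\xi_{u,v}$, and does so affinely with $\partial A/\partial\xi_{u,v}=-2^{1-k}|zw|^{k}$ a constant. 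Using the split (\ref{fs}), namely
\[
f_{I_{2k}}(s,z,w)=\frac{1}{B\,A^{\alpha+\beta}}+\frac{2^{1-k}|zw|^{k}\,\xi_{u-1,v-1}(k\phi_1,k\phi_2)}{B\,A^{\alpha+\beta+1}},
\]
the first term already carries exponent $\alpha+\beta$ and, after $d\mu^{\gamma}(\omega)=(1+\omega)\,d\nu^{\gamma}(\omega)$, contributes the piece $(1+u)(1+v)/(B A^{\alpha+\beta})$ integrated against $d\nu^{\alpha}d\nu^{\beta}$; only the second term needs reworking.

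Next I would exploit the affine dependence to turn the offending $A^{-(\alpha+\beta+1)}$ into derivatives of $A^{-(\alpha+\beta)}$. Since $\xi_{u,v}=u\sin(k\phi_1)\sin(k\phi_2)+v\cos(k\phi_1)\cos(k\phi_2)$, the chain rule gives $\partial_{u}(A^{-(\alpha+\beta)})=(\alpha+\beta)2^{1-k}|zw|^{k}\sin(k\phi_1)\sin(k\phi_2)\,A^{-(\alpha+\beta+1)}$ and the analogous formula for $\partial_{v}$ with $\cos(k\phi_1)\cos(k\phi_2)$. Because $\xi_{u-1,v-1}=(u-1)\sin(k\phi_1)\sin(k\phi_2)+(v-1)\cos(k\phi_1)\cos(k\phi_2)$, the second summand of $f_{I_{2k}}$ equals exactly
\[
\frac{1}{B(\alpha+\beta)}\Big[(u-1)\,\partial_{u}\big(A^{-(\alpha+\beta)}\big)+(v-1)\,\partial_{v}\big(A^{-(\alpha+\beta)}\big)\Big].
\]

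I would then integrate by parts in $u$ and in $v$ separately (the two pieces decouple, and $B$ is independent of $u,v$). Integrating the $u$-piece against $d\mu^{\alpha}(u)=c_{\alpha}(1+u)(1-u^{2})^{\alpha-1}du$, with $c_{\alpha}=\Gamma(\alpha+1/2)/(\sqrt{\pi}\,\Gamma(\alpha))$, and using the identity $(u-1)(1+u)(1-u^{2})^{\alpha-1}=-(1-u^{2})^{\alpha}$ converts the weight into $-c_{\alpha}(1-u^{2})^{\alpha}$, which vanishes at $u=\pm1$; hence the boundary term drops and differentiating $(1-u^{2})^{\alpha}$ reproduces $-2\alpha u\,c_{\alpha}(1-u^{2})^{\alpha-1}$, i.e. $-2\alpha u$ times the density of $d\nu^{\alpha}$. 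The $u$-piece thus becomes $-\frac{2\alpha}{(\alpha+\beta)B}\iint A^{-(\alpha+\beta)}u(1+v)\,d\nu^{\alpha}d\nu^{\beta}$, and symmetrically the $v$-piece becomes $-\frac{2\beta}{(\alpha+\beta)B}\iint A^{-(\alpha+\beta)}v(1+u)\,d\nu^{\alpha}d\nu^{\beta}$. Adding the reworked first term and collecting all three contributions produces precisely the bracketed polynomial $(1+u)(1+v)-\frac{2}{\alpha+\beta}\big(\alpha u(1+v)+\beta v(1+u)\big)$, giving the asserted identity.

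The one genuinely delicate point, and the place I would be careful, is the vanishing of the boundary terms in the integration by parts: this needs $\alpha,\beta>0$ (so that $(1-u^{2})^{\alpha}$ and $(1-v^{2})^{\beta}$ kill the endpoints) together with the requirement that $s$ lie in the half-plane where $A(s,\cdot)\neq0$ and the Laplace integral converges, so that $A^{-(\alpha+\beta)}$ remains bounded on the compact $(u,v)$-domain. Everything else is bookkeeping once the affine dependence of $A$ on $\xi_{u,v}$ — the real engine of the argument — has been recorded.
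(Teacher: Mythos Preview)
Your proposal is correct and follows essentially the same route as the paper: start from the splitting (\ref{fs}), rewrite the second summand as $(u-1)\partial_{u}+(v-1)\partial_{v}$ acting on $A^{-(\alpha+\beta)}$ via the affine dependence of $A$ on $\xi_{u,v}$, and integrate by parts in $u$ and $v$ using $(u-1)(1+u)(1-u^{2})^{\alpha-1}=-(1-u^{2})^{\alpha}$ to pass from $d\mu$ to $d\nu$ and produce the factors $-2\alpha u$, $-2\beta v$. Your explicit remark that the boundary terms vanish only for $\alpha,\beta>0$ (and $s$ in the region of convergence) is a useful clarification that the paper leaves implicit.
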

\begin{proof} In (\ref{fs}),  $f_{I_{2k}}(s, z, w)$  is split into two parts. The second part of $(\ref{fs})$ satisfies
\begin{eqnarray*}
&&\frac{|zw|^{k}\xi_{u-1,v-1}(k\phi_{1}, k\phi_{2})}{B(s, z, w)[A(s,|zw|, q_{u,v}(k\phi_{1}, k\phi_{2}))]^{\alpha+\beta+1}}\\
&=&\frac{|zw|^{k}\xi_{u-1,v-1}(k\phi_{1}, k\phi_{2})}{B(s, z, w)\left[\frac{1}{2^{k}}((S+s)^{k}- 2|zw|^{k}\xi_{u,v}(k\phi_{1}, k\phi_{2}) +(s-S)^{k})\right]^{\alpha+\beta+1}}\nonumber\\
&=&\frac{2^{k-1}}{\alpha+\beta}\frac{1}{B(s, z, w)}\left((u-1)\frac{d}{du} \frac{1 }{[A(s, |zw|, q_{u,v}(k\phi_{1}, k\phi_{2})]^{\alpha+\beta}}\right.\\&&+\left.
(v-1)\frac{d}{dv}\frac{1 }{[A(s,|zw|, q_{u,v}(k\phi_{1}, k\phi_{2}))]^{\alpha+\beta}}\right)
\end{eqnarray*}
where  as before \[\xi_{u,v}(k\phi_{1}, k\phi_{2}) =v\cos(k\phi_{1})\cos(k\phi_{2})+u\sin(k\phi_{1})\sin(k\phi_{2}). \]
This leads to the following expression for the Dunkl kernel in the Laplace domain
\begin{eqnarray}\label{ld1}
&&\frac{1}{\Gamma(k(\alpha+\beta) +1)}\mathcal{L}(E_{\kappa}(z, w,t)) \\
&=&\int_{-1}^{1}\int_{-1}^{1}\frac{1}{B(s, z, w)[A(s, |zw|, q_{u,v}(k\phi_{1}, k\phi_{2}))]^{\alpha+\beta}}d\mu^{\alpha}(u)d\mu^{\beta}(v)\nonumber\\
&&+\frac{1}{\alpha+\beta}\frac{1}{B(s, z,w)}\biggl[\int_{-1}^{1}\int_{-1}^{1}\biggl[\frac{d}{du} \left(\frac{1 }{[A(s,|zw|, q_{u,v}(k\phi_{1}, k\phi_{2}))]^{\alpha+\beta}}\right)\nonumber\\ &&\times (u-1)
+\frac{d}{dv} \left(\frac{1 }{[A(s,|zw|, q_{u,v}(k\phi_{1}, k\phi_{2}))]^{\alpha+\beta}}\right)(v-1)\biggr]d\mu^{\alpha}(u)d\mu^{\beta}(v)\biggr].\nonumber
\end{eqnarray}
The second integral in (\ref{ld1}) is further simplified using integration by parts
\begin{eqnarray*}&&\int_{-1}^{1}\int_{-1}^{1}\frac{d}{du} \left(\frac{1 }{[A(s, |zw|, q_{u,v}(k\phi_{1}, k\phi_{2}))]^{\alpha+\beta}}\right)(u-1)d\mu^{\alpha}(u)d\mu^{\beta}(v)\\
&=&\int_{-1}^{1}\int_{-1}^{1}\frac{1 }{[A(s,|zw|, q_{u,v}(k\phi_{1}, k\phi_{2}))]^{\alpha+\beta}}\left(-\frac{d}{du}(u-1)d\mu^{\alpha}(u)d\mu^{\beta}(v)\right)\\
&=&\frac{-2\alpha\Gamma(\alpha+1/2)}{\Gamma(1/2)\Gamma(\alpha)}\int_{-1}^{1}\int_{-1}^{1}\frac{1 }{[A(s,|zw|, q_{u,v}(k\phi_{1}, k\phi_{2}))]^{\alpha+\beta}}u(1-u^{2})^{\alpha-1}dud\mu^{\beta}(v)
.\end{eqnarray*}
The third integral is simplified similarly.
Collecting all, we obtain the desired result.
\end{proof}

 By the inverse Laplace transform and then setting $t=1$, Lemma \ref{b12}  leads to the following new expression of the Dunkl kernel. The proof is omitted here.
\begin{theorem} \label{ib111}
For each dihedral group $I_{2k}$ and positive multiplicity function $\kappa$, the Dunkl kernel is given by
\begin{eqnarray*}
E_{\kappa}(z, w)&=&
\int_{-1}^{1}\int_{-1}^{1}\left[(1+u)(1+v)-\frac{2}{\alpha+\beta}
(\alpha u(1+v)+\beta v(1+u))\right]\\&& \times h_{\alpha+\beta}( z, w, u, v)d\nu^{\alpha}(u)d\nu^{\beta}(v)
.\end{eqnarray*}
For each odd dihedral group $I_{k}$ and positive multiplicity function $\kappa$, the Dunkl kernel is
\begin{eqnarray*}E_{\kappa}(z, w)&=&\int_{-1}^{1}h_{\alpha}( z, w, u, 1)(1- u)d\nu^{\alpha}(u).\end{eqnarray*}
In these formulas the expression
\begin{eqnarray*}
h_{\gamma}(z, w, u, v)&=&\Phi_{2}^{(k+1)}(\gamma,\ldots, \gamma, 1; k\gamma+1; a_{0},\ldots, a_{k-1}, a_{k})\\
&=& e^{a_{k}}\Phi_{2}^{(k)}(\gamma,\ldots, \gamma; k\gamma+1; a_{0}-a_{k},\ldots, a_{k-1}-a_{k}) \end{eqnarray*} is defined in Theorem \ref{m2}.
\end{theorem}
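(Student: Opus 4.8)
The plan is to simply read off the inverse Laplace transform of the closed Laplace-domain expression supplied by Theorem \ref{b12}, factor by factor, using the transform formula (\ref{l1}), and afterwards to specialise to $\beta=0$ for the odd case. First I would note that in Theorem \ref{b12} the bracketed factor and the measures $d\nu^{\alpha}(u)\,d\nu^{\beta}(v)$ are independent of the Laplace variable $s$, so the only object to be inverted is
\[
\frac{1}{B(s,z,w)\,[A(s,|zw|,q_{u,v}(k\phi_1,k\phi_2))]^{\alpha+\beta}}.
\]
Writing $B(s,z,w)=s-a_k$ with $a_k=\mathrm{Re}(z\overline{w})$, and using the factorisation $A(s,|zw|,q_{u,v})=\prod_{\ell=0}^{k-1}(s-a_\ell)$ with $a_\ell=|zw|\cos((q_{u,v}(k\phi_1,k\phi_2)+2\pi\ell)/k)$ coming from (\ref{fa1}), this becomes
\[
\frac{1}{s^{k(\alpha+\beta)+1}}\Bigl(1-\frac{a_k}{s}\Bigr)^{-1}\prod_{\ell=0}^{k-1}\Bigl(1-\frac{a_\ell}{s}\Bigr)^{-(\alpha+\beta)},
\]
which is precisely the right-hand side of (\ref{l1}) with $m=k+1$, parameters $\beta_0=\dots=\beta_{k-1}=\alpha+\beta$, $\beta_k=1$, and $\gamma=k(\alpha+\beta)+1$. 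Hence its inverse Laplace transform in $t$ equals
\[
\frac{t^{k(\alpha+\beta)}}{\Gamma(k(\alpha+\beta)+1)}\,\Phi_2^{(k+1)}\bigl(\alpha+\beta,\dots,\alpha+\beta,1;k(\alpha+\beta)+1;a_0t,\dots,a_kt\bigr).
\]

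Next I would set $t=1$, which recovers $E_{\kappa}(z,w)=E_{\kappa}(z,w,1)$ and turns the $\Phi_2^{(k+1)}$ above into $h_{\alpha+\beta}(z,w,u,v)$ as defined in Theorem \ref{m2}, and then carry the inverse Laplace transform under the double integral over $(u,v)\in[-1,1]^2$. Since the $\Gamma(k(\alpha+\beta)+1)$ prefactor in Theorem \ref{b12} cancels the $1/\Gamma(k(\alpha+\beta)+1)$ produced by (\ref{l1}), this yields exactly the stated even formula. The interchange of the inversion with the $(u,v)$-integration is the one point needing justification, which I would settle by dominating the integrand uniformly in $(u,v)$ for $s$ in the half-plane of convergence and invoking Fubini, together with the boundedness properties of $h_{\alpha+\beta}$ already recorded after Theorem \ref{m2}.

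For the odd group $I_k$ with $k$ odd, I would invoke the reduction of \cite{Dn}, whereby $I_k$ arises from $I_{2k}$ by retaining a single orbit of reflections, i.e. by letting $\beta\to0$; in this limit the surviving multiplicity is $\alpha$ and $h_{\alpha+\beta}\to h_\alpha$. At $\beta=0$ the bracket in the even formula collapses, since
\[
(1+u)(1+v)-\frac{2}{\alpha}\,\alpha u(1+v)=(1+v)(1-u),
\]
while the probability measure $d\nu^{\beta}(v)$ converges weakly to $\tfrac12(\delta_{-1}+\delta_{1})$. The atom at $v=-1$ is killed by the factor $(1+v)$, and the atom at $v=1$ contributes $\tfrac12\cdot(1+1)(1-u)=(1-u)$ multiplied by $h_\alpha(z,w,u,1)$, which is precisely the odd formula.

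I expect the main obstacle to lie in the two interchanges of limit and integral rather than in any algebra: controlling the integrand uniformly for $s$ in the region of convergence (in particular near the singularities at $s=a_\ell$) so as to move the inverse Laplace transform through the $(u,v)$-integral, and establishing the weak convergence $d\nu^{\beta}\to\tfrac12(\delta_{-1}+\delta_{1})$ as $\beta\to0$ together with the continuity of $(1+v)h_\alpha(z,w,u,v)$ needed to evaluate the limit. Once the factorisation (\ref{fa1}) is in place, the matching with (\ref{l1}) is entirely routine.
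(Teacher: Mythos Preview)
Your argument is correct and, for the even case $I_{2k}$, is exactly the route the paper indicates: invert the Laplace-domain expression of Theorem \ref{b12} term by term via formula (\ref{l1}), then set $t=1$. The paper omits the details, so your explicit factorisation through (\ref{fa1}) and identification of the $\Phi_2^{(k+1)}$ parameters fills in precisely what is left to the reader.

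For the odd case you take a slightly different path. The paper's implicit approach is to redo the integration-by-parts computation of Theorem \ref{b12} starting from the odd Laplace formula (\ref{dk1}); carrying this out one finds that only the $u$-derivative survives and the bracket collapses directly to $(1+u)-\tfrac{2}{\alpha}\alpha u=(1-u)$, after which the same inversion via (\ref{l1}) yields the stated formula. Your alternative of passing to the limit $\beta\to 0$ in the already-proved even formula is equally valid and arguably more economical, since it reuses the even result rather than repeating the calculation; the price is the two limiting justifications you flag (weak convergence $d\nu^{\beta}\to\tfrac12(\delta_{-1}+\delta_{1})$ and continuity of the integrand in $(v,\beta)$), both of which are routine. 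Either route lands on the same formula.
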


The integrand $h_{\gamma}(z, w, u, v)$ in Theorem \ref{ib111} is positive, by its integral expression over the simplex.
In the following, we will show that the measure in the integral is positive as well. This further implies that the Dunkl kernel satisfies $E_{\kappa}(z, w)> 0$.
\begin{lemma}
For $u, v\in [-1,1]$ and $\alpha, \beta\ge 0$, we have
\begin{eqnarray}\label{in1}(1+u)(1+v)-\frac{2}{\alpha+\beta}
(\alpha u(1+v)+\beta v(1+u))\ge 0.\end{eqnarray}
\end{lemma}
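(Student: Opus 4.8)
The plan is to clear the factor $1/(\alpha+\beta)$ and recognize the numerator as a sum of manifestly nonnegative terms. Since the left-hand side of (\ref{in1}) divides by $\alpha+\beta$, I work under the (implicit) assumption $\alpha+\beta>0$; the degenerate case $\alpha=\beta=0$ leaves the expression undefined and need not be considered.

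First I would multiply the quantity in (\ref{in1}) by $\alpha+\beta$ to obtain
\[
(\alpha+\beta)(1+u)(1+v)-2\alpha u(1+v)-2\beta v(1+u).
\]
Next I would group the $\alpha$-terms and the $\beta$-terms separately. Collecting the coefficient of $\alpha$ gives $(1+v)\bigl[(1+u)-2u\bigr]=(1+v)(1-u)$, and collecting the coefficient of $\beta$ gives $(1+u)\bigl[(1+v)-2v\bigr]=(1+u)(1-v)$. This yields the key identity
\[
(\alpha+\beta)\left[(1+u)(1+v)-\frac{2}{\alpha+\beta}\bigl(\alpha u(1+v)+\beta v(1+u)\bigr)\right]=\alpha(1+v)(1-u)+\beta(1+u)(1-v).
\]

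Finally, since $u,v\in[-1,1]$ each of the factors $1+u,\ 1-u,\ 1+v,\ 1-v$ is nonnegative, and $\alpha,\beta\ge 0$ by hypothesis, so the right-hand side is a sum of two nonnegative products and is therefore $\ge 0$. Dividing by $\alpha+\beta>0$ then gives (\ref{in1}).

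I do not expect any genuine obstacle here: the only content is spotting the regrouping that produces the factorization $\alpha(1+v)(1-u)+\beta(1+u)(1-v)$, after which nonnegativity is immediate from $u,v\in[-1,1]$. The one point worth flagging is the tacit requirement $\alpha+\beta>0$ needed for the statement to be well posed, which holds throughout the relevant range of multiplicity functions used in Theorem \ref{ib111}.
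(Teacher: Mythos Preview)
Your argument is correct and in fact cleaner than the paper's. You multiply through by $\alpha+\beta$ and regroup to obtain the identity
\[
(\alpha+\beta)(1+u)(1+v)-2\alpha u(1+v)-2\beta v(1+u)=\alpha(1+v)(1-u)+\beta(1+u)(1-v),
\]
from which nonnegativity is immediate for $u,v\in[-1,1]$ and $\alpha,\beta\ge 0$. The paper instead performs a four-case analysis on the signs of $u$ and $v$: the cases where at least one variable is nonpositive are handled by ad hoc inequalities such as $1+v\ge 2v$, while the case $u,v\ge 0$ is treated by bounding the quotient $\frac{2}{\alpha+\beta}\bigl(\frac{\alpha u}{1+u}+\frac{\beta v}{1+v}\bigr)$ by $1$. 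Your single algebraic identity subsumes all four cases at once and makes the structure transparent; the paper's approach, while valid, is more laborious and somewhat obscures why the inequality holds. Your remark about the tacit assumption $\alpha+\beta>0$ is also apt and applies equally to the original statement.
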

\begin{proof}
When $u\le 0$ and $v\le 0$, the inequality (\ref{in1}) holds obviously. When $u\le 0$ and $v\ge 0$,
we have $-\alpha u(1+v)\ge 0 $ and
\begin{eqnarray*}(1+u)(1+v)-\frac{2\beta}{\alpha+\beta}
 v(1+u)\ge  (1+u)\left(2v-\frac{2\beta}{\alpha+\beta}
 v\right) \ge 0
 .\end{eqnarray*} Therefore, the inequality (\ref{in1}) holds as well. The case when $u\ge 0$ and $v\le 0$ is similar, we will not repeat the proof.

When $u\ge 0$ and $v\ge0$, we have $1+u\ge 2u$ and $1+v\ge 2v$. In this case, we consider the quotient
\begin{eqnarray*}
\frac{2}{\alpha+\beta}\frac{
(\alpha u(1+v)+\beta v(1+u))}{(1+u)(1+v)}=
\frac{2\alpha}{\alpha+\beta}\frac{u}{1+u}+ \frac{2\beta}{\alpha+\beta}\frac{v}{1+v}
\le1
\end{eqnarray*}
which completes the proof.
\end{proof}

Since the intertwining operator preserves  homogeneous polynomials,  we obtain an integral for the reproducing kernel of homogenous polynomials by a series expansion.
\begin{corollary} Denote by $\mathcal{P}_{n}$ the space of homogenous polynomial of degree $n$. For dihedral group $ I_{2k}$, the reproducing kernel of $\mathcal{P}_{n}$ is given by
\begin{eqnarray*} &&V_{\kappa}\left(\frac{\langle \cdot, w\rangle^{n}}{n!}\right)(z)
=
\frac{\Gamma(k(\alpha+\beta)+1)}{\Gamma(\alpha+\beta)^{k}}
\int_{-1}^{1}\int_{-1}^{1}\int_{T^{k}}\left(\sum_{j=0}^{k}a_{j}t_{j}\right)^{n}
\\&&\times\left[(1+u)(1+v)-\frac{2}{\alpha+\beta}
(\alpha u(1+v)+\beta v(1+u))\right]d\omega_{\alpha+\beta}d\nu^{\alpha}(u)d\nu^{\beta}(v).
\end{eqnarray*}
Similarly, for the odd dihedral group $I_{k}$, we have
\begin{eqnarray*} &&V_{\kappa}\left(\frac{\langle \cdot, w\rangle^{n}}{n!}\right)(z)
=
\frac{\Gamma(k\alpha+1)}{\Gamma(\alpha)^{k}}
\int_{-1}^{1}\int_{T^{k}}\left(\sum_{j=0}^{k}a_{j}t_{j}\right)^{n}
(1-u)d\omega_{\alpha+\beta}d\nu^{\alpha}(u)
\end{eqnarray*}
where
$a_{j}=|zw|\cos\left(\frac{q_{u,v}(k\phi_{1}, k\phi_{2})+2\pi j}{k}\right)$, $j=0,\ldots k-1$, $a_{k}={\rm Re} \,(z\overline{w}) $, $t_{k}=1-\sum_{j=0}^{k-1}t_{j}$ and $d\omega_{\nu}=\prod_{j=0}^{k-1}t_{j}^{\nu-1}dt_{0}\ldots dt_{k-1}$.
\end{corollary}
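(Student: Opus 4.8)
The plan is to recover the reproducing kernel of $\mathcal{P}_{n}$ as the degree-$n$ homogeneous component of the Dunkl kernel. Recall that $E_{\kappa}(z,w)=V_{\kappa}(e^{\langle\cdot,w\rangle})(z)$ and that $V_{\kappa}$ is linear and preserves the degree of homogeneous polynomials, so that $V_{\kappa}(\mathcal{P}_{n})=\mathcal{P}_{n}$. Expanding $e^{\langle x,w\rangle}=\sum_{n\ge0}\langle x,w\rangle^{n}/n!$ and applying $V_{\kappa}$ term by term gives
\[E_{\kappa}(z,w)=\sum_{n\ge0}V_{\kappa}\left(\frac{\langle\cdot,w\rangle^{n}}{n!}\right)(z),\]
in which the $n$-th summand is homogeneous of degree $n$ both in $w$ and in $z$. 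By the orthonormal-basis expansion (\ref{ks1}) this summand is precisely $E_{\kappa}^{(n)}(z,w)=\sum_{|\nu|=n}\varphi_{\nu}(z)\varphi_{\nu}(w)$, the reproducing kernel of $\mathcal{P}_{n}$ for $[\cdot,\cdot]_{\kappa}$; hence it automatically satisfies $p_{n}(z)=[p_{n}(w),E_{\kappa}^{(n)}(z,w)]_{\kappa}$ for $p_{n}\in\mathcal{P}_{n}$. It therefore suffices to read off the degree-$n$ part of a second, integral representation of $E_{\kappa}$.

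Next I would insert the simplex form of the Dunkl kernel. Combining Theorem \ref{ib111} with the integral representation (\ref{pi1}) applied to the second form of $h_{\alpha+\beta}$ (the form to which (\ref{pi1}) legitimately applies, exactly as in the second formula of Theorem \ref{m2}) gives
\[E_{\kappa}(z,w)=\frac{\Gamma(k(\alpha+\beta)+1)}{\Gamma(\alpha+\beta)^{k}}\int_{-1}^{1}\int_{-1}^{1}\int_{T^{k}}e^{\sum_{j=0}^{k}a_{j}t_{j}}\,W(u,v)\,d\omega_{\alpha+\beta}\,d\nu^{\alpha}(u)\,d\nu^{\beta}(v),\]
where $W(u,v)=(1+u)(1+v)-\tfrac{2}{\alpha+\beta}(\alpha u(1+v)+\beta v(1+u))$ is the weight from Theorem \ref{ib111}. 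Since each $a_{j}$ is homogeneous of degree $1$ in $w$, so is $\sum_{j=0}^{k}a_{j}t_{j}$, and the degree-$n$ part of the exponential is $\tfrac{1}{n!}(\sum_{j=0}^{k}a_{j}t_{j})^{n}$. Equating the degree-$n$ components of the two representations then produces the announced integral for $V_{\kappa}(\langle\cdot,w\rangle^{n}/n!)(z)$. The odd case follows verbatim, starting instead from the expression $E_{\kappa}(z,w)=\int_{-1}^{1}h_{\alpha}(z,w,u,1)(1-u)\,d\nu^{\alpha}(u)$ of Theorem \ref{ib111} and the simplex form of $h_{\alpha}$, the only change being the single integral against the measure $(1-u)\,d\nu^{\alpha}(u)$.

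The step that actually needs justification is the interchange of the degree-$n$ extraction with the triple integral. For fixed $z,w$ the quantities $a_{j}$ are bounded on the compact set $T^{k}\times[-1,1]^{2}$, so the exponential series converges uniformly there; combined with the boundedness of $W$ and the integrability of $d\omega_{\alpha+\beta},d\nu^{\alpha},d\nu^{\beta}$ (ensured by $\alpha,\beta>0$), the dominated convergence theorem permits term-by-term integration. Uniqueness of the decomposition of the real-analytic function $w\mapsto E_{\kappa}(z,w)$ into homogeneous polynomials then forces the degree-$n$ pieces of the two representations to coincide, and the reproducing property is inherited from $E_{\kappa}^{(n)}$. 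I expect the main obstacle to be only the bookkeeping check that the degree-$n$ components ``in $w$'' and ``in $z$'' single out the same term; this holds because $\sum_{j=0}^{k}a_{j}t_{j}$ is bi-homogeneous of degree $(1,1)$ in $(z,w)$, so the two extractions agree.
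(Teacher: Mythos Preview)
Your proposal is correct and follows essentially the same approach as the paper: the paper simply states that, since $V_{\kappa}$ preserves homogeneous polynomials, the corollary follows from the integral expression of the Dunkl kernel in Theorem~\ref{ib111} ``by a series expansion,'' which is precisely the degree-$n$ extraction you carry out. Your additional justification of the interchange of summation and integration, and of the bi-homogeneity of $\sum_{j=0}^{k}a_{j}t_{j}$, fills in details the paper omits but does not depart from its method.
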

\subsection{The intertwining operator }
 The expressions of the generalized Bessel function and the Dunkl kernel together with the result for general root systems, i.e. Theorem \ref{gin}, lead to integral expressions for the intertwining operator.

  We consider  the intertwining operator associated to invariant polynomials first.
Recall that the generalized Bessel function is an integral of the following Humbert function, see Theorem \ref{bess1},  \begin{eqnarray*}J(w, |z|, s_{1}v, s_{2}u)&=&\frac{1}{2} \left(\Phi_{2}^{(k)}\left(\lambda, \ldots, \lambda; k\lambda; a^{+}_{0},\ldots, a^{+}_{k-1}\right)\right.\\&&+\left.\Phi_{2}^{(k)}\left(\lambda, \ldots, \lambda; k\lambda; a^{-}_{0},\ldots, a^{-}_{k-1}\right)\right)\end{eqnarray*}
where $s_{1}=\cos(k\phi_{1})$, $s_{2}=\sin(k\phi_{1})$, $a_{j}^{+}=|zw|\cos\left(\frac{q_{u, v}(k\phi_{1},  k\phi_{2})-2j\pi}{k}\right)$, $a_{j}^{-}=|zw|\cos\left(\frac{\pi-q_{u, v}(k\phi_{1},  k\phi_{2})-2j\pi}{k}\right),$   $
 j=0, \ldots, k-1$.

 Combining Corollary \ref{cor} with Theorem \ref{bess1}, then yields,
\begin{theorem} \label{intn1}Let $p(x)$ be a polynomial invariant under the action of $I_{2k}, k\ge 2$, i.e. $p(x)=p(g\cdot x), $ for any $g\in I_{2k}$. Then, the intertwining operator $V_{\kappa}$ associated to $I_{2k}$ and $\kappa=(\alpha, \beta)$ is given by
\begin{eqnarray*}
V_{\kappa}(p)(z)=\int_{-1}^{1}\int_{-1}^{1}p_{u,v}(z)
d\nu^{\alpha}(u)d\nu^{\beta}(v)
\end{eqnarray*}
where \begin{eqnarray*}&&p_{u,v}(z)=\left[p(w), J(w, |z|, s_{1}v, s_{2}u)\right]_{0}\\&=&
\frac{1}{2\pi}\int_{\mathbb{R}^{2} } \mathcal{F}\left(e^{-|\cdot|^{2}/2}J(z, \cdot, u, v)\right)(y)\mathcal{F}\left(e^{-|\cdot|^{2}/2} p(\cdot)\right)(y)e^{|y|^{2}/2}dy.\end{eqnarray*}
This is equivalently expressed as
\begin{eqnarray*} &&V_{\kappa}(p)(z)\\&=&\frac{\Gamma{(k(\alpha+\beta))}}{2\Gamma(\alpha+\beta)^{k}}
\int_{-1}^{1}\int_{-1}^{1}\int_{T^{k-1}} \left[p(w), \left(e^{\sum_{j=0}^{k-1}a_{j}^{+}t_{j}}+e^{\sum_{j=0}^{k-1}a_{j}^{-}t_{j}}\right)\right]_{0} \nonumber\\&&\times\prod_{j=0}^{k-1}t_{j}^{\alpha+\beta-1}dt_{1}\ldots dt_{k-1}d\nu^{\alpha}(u)d\nu^{\beta}(v)
\end{eqnarray*}
where $a_{j}^{\pm}$ is defined in Theorem \ref{fir}.
 \end{theorem}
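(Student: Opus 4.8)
The plan is to feed the integral representation of the generalized Bessel function from Theorem~\ref{bess1} into the general formula for $V_\kappa$ on invariant polynomials (Corollary~\ref{cor}), and then to recognize the resulting Fourier--Gaussian integral as the classical Fischer bracket $[\,\cdot\,,\,\cdot\,]_0$. The bridge between the two expressions for $p_{u,v}$ in the statement is the identity already implicit in the proof of Theorem~\ref{gin}: for a polynomial $p$ and an exponential-type function $f$ assembled from reproducing kernels $e^{\langle w,z'\rangle}$, equations (\ref{ks3}) and (\ref{ep1}) give
\[
[p(w),f(w)]_0=\frac{1}{2\pi}\int_{\mathbb{R}^2}\mathcal{F}\bigl(e^{-|\cdot|^2/2}f(\cdot)\bigr)(y)\,\mathcal{F}\bigl(e^{-|\cdot|^2/2}p(\cdot)\bigr)(y)\,e^{|y|^2/2}\,dy .
\]

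First I would specialize Corollary~\ref{cor} to $m=2$. Since $-1\in I_{2k}$ and $\mathcal{J}_\kappa$ is symmetric and $I_{2k}$-invariant in each argument, we may replace $\mathcal{J}_\kappa(\cdot,-z)$ by $\mathcal{J}_\kappa(z,\cdot)$ and then insert the decomposition of Theorem~\ref{bess1}, writing $\mathcal{J}_\kappa(z,\cdot)$ as the $d\nu^\alpha(u)\,d\nu^\beta(v)$ average of the kernel piece $J(\cdot,|z|,s_1 v,s_2 u)$, with $s_1=\cos(k\phi_1)$, $s_2=\sin(k\phi_1)$. Because the factors $\mathcal{F}(e^{-|\cdot|^2/2}p)$ and $e^{|y|^2/2}$ do not involve $(u,v)$, Fubini's theorem moves the $(u,v)$ integration outside the $y$ integral, and the inner $y$ integral is exactly $[p(w),J(w,|z|,s_1 v,s_2 u)]_0$ by the displayed identity. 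This yields the first asserted form $V_\kappa(p)(z)=\int_{-1}^1\int_{-1}^1 p_{u,v}(z)\,d\nu^\alpha(u)\,d\nu^\beta(v)$.

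For the equivalent simplex form, I would substitute into $p_{u,v}$ the explicit simplex integral for $J$ from Theorem~\ref{bess1}, that is the $c_{\kappa,k}\int_{T^{k-1}}\bigl(e^{\sum_j a_j^+ t_j}+e^{\sum_j a_j^- t_j}\bigr)\prod_j t_j^{\alpha+\beta-1}$ expression, with $a_j^\pm$ as in Theorem~\ref{fir}. The Fischer bracket $[p(w),\,\cdot\,]_0$ acts only in $w$, by applying the fixed finite-order operator $p(\partial_w)$ and evaluating at $w=0$; being linear and continuous on exponential-type functions, it commutes with the simplex integration in the variables $t$. Carrying out this interchange places the bracket inside the simplex integral and produces precisely the stated formula with $\bigl[p(w),e^{\sum_j a_j^+ t_j}+e^{\sum_j a_j^- t_j}\bigr]_0$ under the integral sign.

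The main obstacle will be the rigorous justification of the two interchanges. For the Fubini step I would use that $\mathcal{F}(e^{-|\cdot|^2/2}p)$ lies in the Schwartz space (the remark after Theorem~\ref{gin}), while the factor $\mathcal{F}(e^{-|\cdot|^2/2}J)(y)\,e^{|y|^2/2}$ is controlled uniformly in $(u,v)$ by the bound $|K(iy,z)|\le e^{|z|^2/2}$ together with the finiteness of the beta measures; this furnishes an integrable dominating function. For the exchange of $p(\partial_w)$ with the simplex integral, differentiation under the integral sign is legitimate because the integrand is smooth in $(t,w)$ on the compact simplex. The residual work is the purely notational matching between the $J(w,|z|,s_1 v,s_2 u)$ convention and the polar variables $a_j^\pm$, which is recorded in Theorems~\ref{fir} and~\ref{bess1}.
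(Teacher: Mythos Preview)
Your proposal is correct and follows the same route the paper indicates: the theorem is obtained by combining Corollary~\ref{cor} with Theorem~\ref{bess1}, and you supply the details (the replacement $\mathcal{J}_\kappa(\cdot,-z)\to\mathcal{J}_\kappa(z,\cdot)$ via $-1\in I_{2k}$, the Fubini interchange, and the identification of the Fourier--Gaussian integral with the Fischer bracket via (\ref{ks3}) and (\ref{ep1})) that the paper leaves implicit.
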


\begin{ex} For the group $I_{2}$, denote $x=(x_{1}, x_{2})$ and $y=(y_{1}, y_{2})$.  By setting $k=1$ in (\ref{f2}),  the generalized Bessel function associated to the group $I_{2}$ is given by \begin{eqnarray*}\mathcal{J}_{\kappa}(x, y)=\frac{1}{2}\int_{-1}^{1}\int_{-1}^{1}\left(e^{vx_{1}y_{1}+ux_{2}y_{2}}+e^{-(vx_{1}y_{1}+ux_{2}y_{2})}\right)d\nu^{\alpha}(u)d\nu^{\beta}(v). \end{eqnarray*}
By Theorem \ref{intn1},  the intertwining operator for the $I_{2}$-invariant polynomials is given by
\begin{eqnarray*}&&V_{\kappa}(p)(x)\\
&=&\frac{1}{2}\int_{-1}^{1}\int_{-1}^{1}\left[p(x),  e^{vx_{1}y_{1}+ux_{2}y_{2}}+e^{-(vx_{1}y_{1}+ux_{2}y_{2})}\right]_{0}d\nu^{\alpha}(u)d\nu^{\beta}(v)\\
&=&\frac{1}{2}\int_{-1}^{1}\int_{-1}^{1}\left(p(vx_{1}, ux_{2})+p(-vx_{1}, -ux_{2})\right)d\nu^{\alpha}(u)d\nu^{\beta}(v)\\&=&\int_{-1}^{1}\int_{-1}^{1}p(vx_{1}, ux_{2})d\nu^{\alpha}(u)d\nu^{\beta}(v) \end{eqnarray*}
which coincides with the known results.
\end{ex}
We study  the intertwining operator for the root system $B_{2}$ again.
\begin{ex} (Root system $B_{2}$)  For $x=(x_{1}, x_{2}), y=(y_{1}, y_{2})$ and $\kappa=(\alpha, \beta)$, by setting $k=2$ in Theorem \ref{bess1}, the generalized Bessel function takes the following form in
Cartesian coordinates, (see also \cite{AD}, \cite{CDL}, \cite{Dn})
\begin{eqnarray*}
\mathcal{J}_{\kappa}(x,y)=\int_{-1}^{1}\int_{-1}^{1}
\tilde{\mathcal{I}}_{\alpha+\beta-1/2}\left(\sqrt{\frac{Z_{x,y}(u,v)}{2}}\right)d\nu_{\alpha}(u)d\nu_{\beta}(v)
\end{eqnarray*}
where
\[\tilde{\mathcal{I}}_{\nu}(t)=\Gamma(\nu+1)\sum_{n=0}^{\infty}\frac{(t/2)^{2n}}{n!\Gamma(n+\nu+1)}\]
and
 \[Z_{x,y}(u,v)=(x_{1}^{2}+x_{2}^{2})(y_{1}^{2}+y_{2}^{2})+u(x_{1}^2-x_{2}^2)
(y_{1}^2-y_{2}^2)+4vx_{1}x_{2}y_{1}y_{2}. \]
In order to obtain the intertwining operator for the invariant polynomials, we only need to compute  $p_{u,v}(y)$  defined in Theorem \ref{intn1}, i.e.
\begin{eqnarray*}
p_{u,v}(y)=\left[p(x), \tilde{\mathcal{I}}_{\alpha+\beta-1/2}\left(\sqrt{\frac{Z_{x,y}(u,v)}{2}}\right) \right]_{0}.\end{eqnarray*}
By the  Mehler-Sonine type integral expression of the Bessel function  and the reproducing property of the exponential, we have
\begin{eqnarray*}
&&p_{u,v}(y)\\&=&c_{B_{2}}\left[p(x),
\int_{\{t_{1}^{2}+t_{2}^{2}\le 1\}}e^{x_{1}at_{1}+ax_{2}(ct_{1}+bt_{2})}(1-t_{1}^{2}-t_{2}^{2})^{\alpha+\beta-3/2}dt_{1}dt_{2}\right]_{0}
\\&=&c_{B_{2}}\int_{\{t_{1}^{2}+t_{2}^{2}\le 1\}}p(at_{1}, a(ct_{1}+bt_{2}))(1-t_{1}^{2}-t_{2}^{2})^{\alpha+\beta-3/2}dt_{1}dt_{2}
\end{eqnarray*}
where $c_{B_{2}}=(\alpha+\beta-1/2)/\pi$,  $a, b$ and $c$ have been determined explicitly in \cite{AD} as
\begin{eqnarray*}
a&=&\left(\frac{y_{1}^{2}+y_{2}^{2}+u(y_{1}^{2}-y_{2}^{2})}{2}\right)^{1/2},
\\b&=&\frac{\left((y_{1}^{2}-y_{2}^{2})^{2}(1-u^{2})+4y_{1}^{2}y_{2}^{2}(1-v^{2})\right)^{1/2}}{y_{1}^{2}+y_{2}^{2}+u(y_{1}^{2}-y_{2}^{2})},
\\c&=&\frac{2vy_{1}y_{2}}{y_{1}^{2}+y_{2}^{2}+u(y_{1}^{2}-y_{2}^{2})}.
\end{eqnarray*}
Hence, for $\alpha+\beta>1/2$ and  $I_{4}$-invariant polynomial $p(y)$, the intertwining operator associated to $B_{2}$ is given by
\begin{eqnarray*}
V_{\kappa}(p)(y)&=&c_{B_{2}}\int_{-1}^{1}\int_{-1}^{1}\int_{\{t_{1}^{2}+t_{2}^{2}\le 1\}}p(at_{1}, a(ct_{1}+bt_{2}))\\&& \times(1-t_{1}^{2}-t_{2}^{2})^{\alpha+\beta-3/2}dt_{1}dt_{2}
d\nu^{\alpha}(u)d\nu^{\beta}(v).
\end{eqnarray*}
It is seen that the measure given above is positive, therefore the integral transform is a positive operator as expected.
\end{ex}


Let us now turn to the general case, which follows by combining Theorem \ref{gin} with Theorem \ref{ib111}.
\begin{theorem} \label{mt} For  polynomials $p(z)$, the intertwining operator $V_{\kappa}$ for the dihedral group $I_{2k}$ is given by
\begin{eqnarray*}
V_{\kappa}(p)(z)&=&\int_{-1}^{1}\int_{-1}^{1}\left((1+u)(1+v)-\frac{2}{\alpha+\beta}
(\alpha u(1+v)+\beta v(1+u))\right)\\ &&\times P_{\alpha+\beta}(z, u, v)d\nu^{\alpha}(u)d\nu^{\beta}(v).
\end{eqnarray*}
The intertwining operator $V_{\kappa}$ for the odd dihedral group $I_{k}$  is given by
\begin{eqnarray*}
V_{\kappa}(p)(z)&=&\int_{-1}^{1}P_{\alpha}(z, u, v) (1-u)d\nu^{\alpha}(u).
\end{eqnarray*}
In these formulas, we put
\begin{eqnarray*}
&&P_{\gamma}(z, u, v) )=\left[p(w), h_{\gamma}(z, w, u, v)\right]_{0}\\
&=&\frac{1}{2\pi}\int_{\mathbb{R}^{2} } \mathcal{F}\left(e^{-|\cdot|^{2}/2}h_{\gamma}(z, \cdot, u, v)\right)(y)\mathcal{F}\left(e^{-|\cdot|^{2}/2} p(\cdot)\right)(y)e^{|y|^{2}/2}dy
\end{eqnarray*}
with
\begin{eqnarray*}
h_{\gamma}(z, w, u, v)&=&\Phi_{2}^{(k+1)}(\gamma,\ldots, \gamma, 1; k\gamma+1; a_{0},\ldots, a_{k-1}, a_{k}) \end{eqnarray*} defined in Theorem \ref{m2}.
\end{theorem}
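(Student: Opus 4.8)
The plan is to reduce everything to the classical Fischer pairing and then substitute the explicit dihedral Dunkl kernel. The starting point is the observation, implicit in the proof of Theorem~\ref{gin} (cf.\ equation~(\ref{ks4}) and the discussion after Corollary~\ref{cor}): applying $V_{\kappa}$ in the variable $z$ to the reproducing identity $p(z)=[p(w),e^{\langle w,z\rangle}]_{0}$, and using the defining relation $V_{\kappa}\bigl(e^{\langle\cdot,w\rangle}\bigr)=E_{\kappa}(\cdot,w)$ together with the symmetry $E_{\kappa}(z,w)=E_{\kappa}(w,z)$, yields the compact formula
\[
V_{\kappa}(p)(z)=\bigl[p(w),E_{\kappa}(w,z)\bigr]_{0},
\]
the pairing being taken in $w$ with $z$ a parameter. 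This is the only consequence of Section~\ref{se1} that I would use.

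Next I would insert the expansion of Theorem~\ref{ib111}. For $I_{2k}$ it reads
\[
E_{\kappa}(w,z)=\int_{-1}^{1}\int_{-1}^{1} W_{\alpha,\beta}(u,v)\,h_{\alpha+\beta}(w,z,u,v)\,d\nu^{\alpha}(u)\,d\nu^{\beta}(v),
\]
with $W_{\alpha,\beta}(u,v)=(1+u)(1+v)-\tfrac{2}{\alpha+\beta}\bigl(\alpha u(1+v)+\beta v(1+u)\bigr)$. Since $[\,\cdot\,,\,\cdot\,]_{0}$ is linear in its second slot and $W_{\alpha,\beta},u,v$ are independent of $w$, the pairing should commute with the $(u,v)$-integration, giving
\[
V_{\kappa}(p)(z)=\int_{-1}^{1}\int_{-1}^{1} W_{\alpha,\beta}(u,v)\,\bigl[p(w),h_{\alpha+\beta}(w,z,u,v)\bigr]_{0}\,d\nu^{\alpha}(u)\,d\nu^{\beta}(v).
\]
Because $h_{\gamma}$ is symmetric in its first two arguments (it depends on $z,w$ only through $|zw|$, $\operatorname{Re}(z\bar w)$ and $\xi_{u,v}(k\phi_{1},k\phi_{2})$), the inner bracket equals $[p(w),h_{\alpha+\beta}(z,w,u,v)]_{0}=P_{\alpha+\beta}(z,u,v)$, which is exactly the even-case claim. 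The odd group $I_{k}$ is handled identically, starting from the single-integral expansion $E_{\kappa}(w,z)=\int_{-1}^{1}h_{\alpha}(w,z,u,1)(1-u)\,d\nu^{\alpha}(u)$ of Theorem~\ref{ib111}.

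The step deserving care is the interchange of the pairing and the $(u,v)$-integral. Since $p$ is a polynomial, $[p(w),g(w)]_{0}=(p(\partial_{w})g)(0)$ is a fixed finite-order differential operator evaluated at the origin, so the interchange amounts to differentiating under the integral sign. This is legitimate because, by its simplex representation (\ref{pi1}), the function $h_{\alpha+\beta}(\,\cdot\,,z,u,v)$ and all its $w$-derivatives up to the order dictated by $\deg p$ are continuous, hence uniformly bounded, for $(u,v)$ in the compact square $[-1,1]^{2}$, while the beta measures $d\nu^{\alpha},d\nu^{\beta}$ are finite for $\alpha,\beta>0$. Finally, the equivalent Fourier-analytic form of $P_{\gamma}$ quoted in the statement follows by rewriting the classical pairing $[p(w),h_{\gamma}(z,w,u,v)]_{0}$ through the Macdonald identity exactly as in the proof of Theorem~\ref{gin}. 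I expect this last bookkeeping, keeping track of the arguments of $E_{\kappa}$ and $h_{\gamma}$ under the symmetry invoked above, to be the only genuinely delicate point; the remainder is a direct substitution.
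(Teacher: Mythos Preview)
Your proposal is correct and follows the same route as the paper: the paper simply states that Theorem~\ref{mt} ``follows by combining Theorem~\ref{gin} with Theorem~\ref{ib111},'' and your argument is precisely that combination, spelled out through the Fischer-pairing identity $V_{\kappa}(p)(z)=[p(w),E_{\kappa}(w,z)]_{0}$ that underlies the proof of Theorem~\ref{gin}. Your additional care in justifying the interchange of the pairing with the $(u,v)$-integral and in invoking the symmetry of $h_{\gamma}$ only makes explicit what the paper leaves implicit.
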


We verify this directly for the group $I_{1}$.
\begin{ex} Let $x=(x_{1}, x_{2})$ and $y=(y_{1}, y_{2})$. For the rank one case, the intertwining operator is given by \begin{eqnarray*}
V_{\kappa}(p)(x)&=&\int_{-1}^{1}P_{\alpha}(x, u, v) (1-u)d\nu^{\alpha}(u)
\end{eqnarray*}
where
\begin{eqnarray*}
 P_{\gamma}(x, u, v) )&=&\left[p(y), h_{\gamma}(x, y, u, v)\right]_{0}\\
&=&\alpha\int_{0}^{1} [p(y), e^{x_{1}y_{1}+(1+ut-t)x_{2}y_{2}} ]_{0} t^{\alpha-1}dt.
\end{eqnarray*}
Computing the Fischer inner product, the intertwining operator is expressed as
\begin{eqnarray*}
V_{\kappa}(p)(x_{1}, x_{2})&=&\alpha\int_{-1}^{1}\int_{0}^{1} p(x_{1}, (ut+1-t)x_{2})t^{\alpha-1}dt (1-u)d\nu^{\alpha}(u)\\
&=&\alpha\int_{-1}^{1}\int_{0}^{1} p(x_{1}, (ut+1-t)x_{2})t^{\alpha-1}dtd\mu^{\alpha}(u)
\\&&+\alpha
\int_{-1}^{1}\int_{0}^{1} p(x_{1}, (ut+1-t)x_{2})t^{\alpha-1}
(-2u)dtd\nu^{\alpha}(u).
\end{eqnarray*}
In the following, we only consider the polynomial $x_{2}^{n}$, because in the present case the intertwining operator has no influence on the  variable $x_{1}$. We claim that the following equality holds
\begin{eqnarray} \label{q1}
&&\alpha
\int_{-1}^{1}\int_{0}^{1} (ut+1-t)^{n}t^{\alpha-1}dt
(-2u)d\nu^{\alpha}(u)\\
&=&\int_{-1}^{1}\int_{0}^{1}n(u-1)(ut+1-t)^{n-1}t^{\alpha}dtd\mu^{\alpha}(u).\nonumber
\end{eqnarray}
The identity (\ref{q1}) can be proved by computing both the left and right hand side explicitly.
Indeed,  the left hand side is
\begin{eqnarray*}
&&-2\int_{-1}^{1}\int_{0}^{1}\alpha(ut+1-t)^{n}t^{\alpha-1}dt\, u(1-u^{2})^{\alpha-1}du\\
&=&-\alpha\sum_{j=0}^{n}\binom{n}{j}\int_{-1}^{1}2u^{j+1}(1-u^{2})^{\alpha-1}du\int_{0}^{1}t^{j+\alpha-1}(1-t)^{n-j}dt
\\&=&-2\alpha\sum_{j=0}^{n}\binom{n}{j}\frac{\Gamma(\frac{j}{2}+1)\Gamma(\alpha)}{\Gamma(\frac{j}{2}+\alpha+1)}
\frac{\Gamma(j+\alpha)\Gamma(n-j+1)}{\Gamma(n+\alpha+1)}
\\&=&-\frac{\Gamma(\alpha+1)\Gamma(n+1)}{\Gamma(n+\alpha+1)}\sum_{j=0}^{n}\frac{(1+(-1)^{j+1})}{2}\frac{\Gamma(\frac{j}{2})\Gamma(j+\alpha)}{\Gamma(j)\Gamma(\frac{j}{2}+\alpha+1)}.
\end{eqnarray*}
The right hand side is
\begin{eqnarray*}
&&\int_{-1}^{1}\int_{0}^{1}n(u-1)(ut+1-t)^{n-1}t^{\alpha}dt(1+u)(1-u^{2})^{\alpha-1}du\\
&=&-\int_{-1}^{1}\int_{0}^{1}n(ut+1-t)^{n-1}t^{\alpha}dt(1-u^{2})^{\alpha}du\\
&=&-\sum_{j=0}^{n-1}n\binom{n-1}{j}\int_{-1}^{1}u^{j}(1-u^{2})^{\alpha}du\int_{0}^{1}t^{j+\alpha}(1-t)^{n-1-j}dt\\
&=&-\sum_{j=0}^{n-1}\frac{n!(1+(-1)^{j})}{2j!(n-1-j)!}\frac{\Gamma(\frac{j+1}{2})\Gamma(\alpha+1)}{\Gamma(\frac{j+1}{2}+\alpha+1)}\frac{\Gamma(j+\alpha+1)\Gamma(n-j)}{\Gamma(n+1+\alpha)}
\\&=&-\frac{\Gamma(\alpha+1)\Gamma(n+1)}{\Gamma(n+\alpha+1)}\sum_{j=0}^{n-1}\frac{(1+(-1)^{j})}{2}\frac{\Gamma(\frac{j+1}{2})\Gamma(j+\alpha+1)}{\Gamma(j+1)\Gamma(\frac{j+1}{2}+\alpha+1)}
\\&=&-\frac{\Gamma(\alpha+1)\Gamma(n+1)}{\Gamma(n+\alpha+1)}\sum_{j=1}^{n}\frac{(1+(-1)^{j+1})}{2}\frac{\Gamma(\frac{j}{2})\Gamma(j+\alpha)}{\Gamma(j)\Gamma(\frac{j}{2}+\alpha+1)}.
\end{eqnarray*}
Comparing both sides, we obtain the identity (\ref{q1}).

Now, with the identity (\ref{q1}), the intertwining operator for $x_{2}^{n}$ becomes
\begin{eqnarray}
\label{r1}&&V_{\kappa}(x_{2}^{n})
=\alpha\int_{-1}^{1}\int_{0}^{1} (ut+1-t)x_{2})^{n}t^{\alpha-1}dtd\mu^{\alpha}(u)
\\&&+\int_{-1}^{1}\int_{0}^{1}n(u-1)x_{2}((ut+1-t)x_{2})^{n-1}t^{\alpha}dtd\mu^{\alpha}(u).
\nonumber\end{eqnarray}
On the other hand, by direct verification or using integration by parts, we have
\begin{eqnarray}\label{e1}
\alpha\int_{0}^{1}(ut+1-t)^{n}t^{\alpha-1}dt+\int_{0}^{1}(u-1)n(ut+1-t)^{n-1}t^{\alpha}dt=u^{n}
.\end{eqnarray}
Hence, combining (\ref{r1}) and (\ref{e1}), we obtain
 \begin{eqnarray*}
V_{\kappa}(x_{2}^{n})
&=&\int_{-1}^{1}(x_{2}u)^{n}d\mu^{\alpha}(u),
\end{eqnarray*}
which is the well-known expression.
\end{ex}

\subsection{New proof of Xu's result}

In this section, we reobtain the intertwining operator given in \cite{x}. We start from the odd dihedral group $I_{k}$ with multiplicity function $\alpha$. In this case, the Laplace transform of the Dunkl kernel is given by
\begin{eqnarray}\label{dk1}&& \frac{1}{2^{k\alpha}\Gamma(k\alpha +1)}\mathcal{L}(E_{\kappa} (z, w, t))\\&=&
\int_{-1}^{1}\frac{(s+S)^{k}-2{\rm Re}(z^{k}\overline{w^{k}})+(s-S)^{k}}{(s-{\rm Re}(z\overline{w}))((s+S)^{k}-2|zw|^{k}\xi_{u,1}(k\phi_{1}, k\phi_{2})+(s-S)^{k})^{\alpha+1}}d\mu^{\alpha}(u)\nonumber\end{eqnarray}
where \[\xi_{u, v}(k\phi_{1}, k\phi_{2})=v\cos(k\phi_{1})\cos(k\phi_{2})+u\sin(k\phi_{1})\sin(k\phi_{2}).\]  This is obtained by the relations between the Dunkl kernel of $I_{k}$ and $I_{2k}$, see  also Theorem 12 in \cite{CDL}.

Denote $w_{p}=e^{i\frac{p\pi}{k}}$, then $w_{p}^{k}=e^{ip\pi}=\cos(p\pi) $, for $p=0,1,\ldots, 2k-1$. Putting  $w=w_{p}$ in  formula (\ref{dk1}), the Dunkl kernel $E_{\kappa} (z, w_{p}, t)$ in the Laplace domain becomes
\begin{eqnarray*}&& \mathcal{L}(E_{\kappa} (z, e^{i\frac{p\pi}{k}}, t))\\
&=&2^{k\alpha}\Gamma(k\alpha +1)\frac{(s+S)^{k}-2{\rm Re}(z^{k}\overline{w^{k}})+(s-S)^{k}}{(s-{\rm Re}(z\overline{w}))((s+S)^{k}-2|zw|^{k}\xi_{u,1}(k\phi_{1}, p\pi)+(s-S)^{k})^{\alpha+1}}
\\&=&\Gamma(k\alpha+1)\frac{1}{\left(s-|z|\cos(\phi_{1}-\frac{p\pi}{k})\right)^{\alpha+1}\prod_{j=1}^{k-1}\left(s-|z|\cos(\phi_{1}-\frac{p\pi}{k}-\frac{2j\pi}{k})\right)^{\alpha}}  \end{eqnarray*}
where the first identity is because \[\xi_{u, 1}(k\phi_{1}, p\pi)=\cos(k\phi_{1})\cos(p\pi)+u\sin(k\phi_{1})\sin(p\pi)=(-1)^{p}\cos(k\phi_{1})\]
which is independent of $u$ and $\int_{-1}^{1}d\mu_{\alpha}(u)=1$. The inverse Laplace transform immediately shows that the Dunkl kernel in this special case is
\begin{eqnarray} \label{x1}
&&V_{\kappa}\left(e^{\langle \cdot, w_{p}\rangle}\right)(z)=E_{\kappa} (z, w_{p})\\&=&e^{a_{0}}\Phi_{2}^{(k-1)}(\alpha, \ldots, \alpha; k\alpha+1; a_{1}-a_{0}, \ldots, a_{k-1}-a_{0})\nonumber\\&=&c_{\alpha, k}\int_{T^{k-1}}e^{\langle e^{ip\pi/k},  z \sum_{j=0}^{k-1}e^{-i2j\pi/k}t_{j}\rangle} t_{0}^{\alpha}\prod_{j=1}^{k-1}t_{j}^{\alpha-1}dt_{1}\ldots dt_{k-1}\nonumber
\end{eqnarray}
where $c_{\alpha, k}=\frac{\Gamma{(k\alpha+1)}}{\alpha\Gamma(\alpha)^{k}}$, $a_{j}=|z|\cos(\phi_{1}-\frac{p\pi}{k}-\frac{2\pi j}{k} )$, $j=0,\ldots, k-1$ and $t_{0}=1-\sum_{j=1}^{k-1}t_{j}$. This formula  also follows from  setting $w=w_{p}$ in Theorem \ref{ib111}.

It is known that the intertwining operator preserves homogenous polynomials, i.e. $V_{\kappa}(\mathcal{P}_{n})\subset\mathcal{P}_{n} $ where $\mathcal{P}_{n}$ is the space of homogenous polynomials of degree $n$.
Hence, formula (\ref{x1}) yields
\begin{eqnarray*}
&&V_{\kappa}(\langle \cdot, w_{p}\rangle^{n} )(z)\\&=&c_{\alpha, k}\int_{T^{k-1}}\left\langle w_{p}, z \sum_{j=0}^{k-1}e^{-i2j\pi/k}t_{j}\right\rangle^{n} t_{0}^{\alpha}\prod_{j=1}^{k-1}t_{j}^{\alpha-1}dt_{1}\ldots dt_{k-1}.
\end{eqnarray*}
By a limit discussion,  it  further leads to the intertwining operator for functions of the form  $f(\langle e^{i \frac{p\pi}{k}}, z\rangle  )$ as
\begin{eqnarray*}
&&V_{\kappa}\left( f\left(\langle  \cdot, e^{i \frac{p\pi}{k}}\rangle  \right)\right)(z)\\&=&c_{\alpha, k}\int_{T^{k-1}}f\left(\left\langle e^{i\frac{p\pi}{k}}, z \sum_{j=0}^{k-1}e^{-i2j\pi/k}t_{j}\right\rangle\right) t_{0}^{\alpha}\prod_{j=1}^{k-1}t_{j}^{\alpha-1}dt_{1}\ldots dt_{k-1}
\end{eqnarray*}
where $t_{0}=1-\sum_{j=1}^{k-1}t_{j}$ and $c_{\alpha, k}=\frac{\Gamma{(k\alpha+1)}}{\alpha\Gamma(\alpha)^{k}}$, which is the formula given in \cite{x}, Theorem 1.1.

Based on the above proof, we understand Xu's formula  in the another way, which is the following corollary.
\begin{corollary} \label{hl1} For  polynomials $p(z)$, the intertwining operator $V_{\kappa}$ for the dihedral group $I_{k}$ with $k$ odd at the lines $z=|z|e^{i\frac{q\pi}{k}}, 0\le q \le 2k-1$ is given by
\begin{eqnarray*}
&&V_{\kappa}\left( p\left( \cdot \right)\right)\left(|z|e^{i\frac{q\pi}{k}}\right)\\&=&c_{\alpha, k}\int_{T^{k-1}}p\left( \sum_{j=0}^{k-1}|z| e^{i(q-2j)\pi/k}t_{j}\right) t_{0}^{\alpha}\prod_{j=1}^{k-1}t_{j}^{\alpha-1}dt_{1}\ldots dt_{k-1},
\end{eqnarray*}
where $t_{0}=1-\sum_{j=1}^{k-1}t_{j}$ and $c_{\alpha, k}=\frac{\Gamma{(k\alpha+1)}}{\alpha\Gamma(\alpha)^{k}}$.
\end{corollary}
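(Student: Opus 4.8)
The plan is to deduce the corollary from the already-derived formula (\ref{x1}) for $E_{\kappa}(z,w_p)$, combined with two structural properties of the Dunkl kernel that are immediate from its series (\ref{ks1}): the symmetry $E_{\kappa}(x,y)=E_{\kappa}(y,x)$ and the joint homogeneity $E_{\kappa}(\lambda x,y)=E_{\kappa}(x,\lambda y)$. Because $V_{\kappa}$ is linear and maps $\cP_n$ into itself, and because the powers $\langle\cdot,\eta\rangle^{n}$ span $\cP_n$ as $\eta$ varies over $\mR^{2}$, it is enough to establish the identity for the generating functions $p(y)=e^{\langle y,\eta\rangle}$; extracting the homogeneous component of degree $n$ in $\eta$ and forming linear combinations then yields the result for every polynomial.

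For such an exponential I would compute, with $\zeta=|z|e^{iq\pi/k}$ the prescribed evaluation point,
\[
V_{\kappa}\bigl(e^{\langle\cdot,\eta\rangle}\bigr)(\zeta)=E_{\kappa}(\zeta,\eta)=E_{\kappa}(\eta,\zeta)=E_{\kappa}(|z|\eta,\,e^{iq\pi/k}),
\]
using first the representation $E_{\kappa}(x,y)=V_{\kappa}(e^{\langle\cdot,y\rangle})(x)$, then symmetry, then joint homogeneity to bring the second argument onto a special line of modulus one. Now formula (\ref{x1}) applies verbatim, with $z$ replaced by $|z|\eta$ and $p$ by $q$, giving
\[
E_{\kappa}(|z|\eta,e^{iq\pi/k})=c_{\alpha,k}\int_{T^{k-1}}e^{\langle e^{iq\pi/k},\,|z|\eta\sum_{j=0}^{k-1}e^{-i2j\pi/k}t_j\rangle}\,t_{0}^{\alpha}\prod_{j=1}^{k-1}t_{j}^{\alpha-1}\,dt_{1}\cdots dt_{k-1}.
\]

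The next step is to recognise the integrand. Since $\langle a,b\rangle={\rm Re}(a\bar b)$ is symmetric and real-bilinear, the exponent equals $\langle\sum_{j=0}^{k-1}|z|e^{i(q+2j)\pi/k}t_{j},\eta\rangle$, so the integrand is precisely $p$ evaluated at the convex combination $\sum_{j}|z|e^{i(q+2j)\pi/k}t_{j}$. The only mismatch with the statement is the sign $q+2j$ against $q-2j$, and I would remove it by the coordinate permutation $j\mapsto k-j$ on $\{1,\dots,k-1\}$ (fixing the index $0$): this sends $e^{i(q+2j)\pi/k}$ to $e^{i(q-2j)\pi/k}$, maps $T^{k-1}$ to itself, and leaves the weight $t_{0}^{\alpha}\prod_{j\ge1}t_{j}^{\alpha-1}$ unchanged because the anomalous power $t_{0}^{\alpha}$ sits on the fixed index. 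This produces the asserted formula for $p=e^{\langle\cdot,\eta\rangle}$, whence for all polynomials by the reduction above.

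I expect the genuine difficulty to be purely one of bookkeeping rather than of ideas: one must keep simultaneous track of the modulus $|z|$, the sign of the exponent, and the distinguished weight $t_{0}^{\alpha}$ through the symmetry, the rescaling, and the reindexing, and verify that the permutation flipping $q+2j$ into $q-2j$ is exactly the one fixing the index that carries the extra factor $t_{0}^{\alpha}$. It is worth noting that the reason a single simplex integral suffices here, with no $u$-integration, is special to $w$ lying on a reflection line in the odd case: there $\xi_{u,1}(k\phi_{1},p\pi)=(-1)^{p}\cos(k\phi_{1})$ is independent of $u$, so the $d\mu^{\alpha}(u)$-integration in (\ref{dk1}) collapses to unity, which is precisely the simplification already recorded in passing from (\ref{dk1}) to (\ref{x1}) and which I would invoke rather than redo.
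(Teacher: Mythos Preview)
Your argument is correct. The only substantive difference from the paper's own proof is in how you arrive at the Dunkl kernel with the special argument in the \emph{first} slot: the paper says ``the same method used for deriving the formula (\ref{x1}) leads to a similar formula for $E_{\kappa}(|z|e^{iq\pi/k},w)$'' and then applies the Fischer reproducing property $[p(w),e^{\langle w,x\rangle}]_{0}=p(x)$; you instead invoke the symmetry $E_{\kappa}(x,y)=E_{\kappa}(y,x)$ and the joint homogeneity $E_{\kappa}(\lambda x,y)=E_{\kappa}(x,\lambda y)$ to reduce directly to the already-proved formula (\ref{x1}), and then reach polynomials via generating functions, which is the same reproducing property in disguise. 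Your route is slightly more economical because it avoids repeating the Laplace-domain simplification, and it makes transparent that the passage from $w_{p}$ in the second slot to $|z|e^{iq\pi/k}$ in the first slot costs nothing. The reindexing $j\mapsto k-j$ on $\{1,\ldots,k-1\}$ that converts $q+2j$ into $q-2j$ is correct and, as you note, preserves both $t_{0}=1-\sum_{j\ge1}t_{j}$ and the asymmetric weight $t_{0}^{\alpha}\prod_{j\ge1}t_{j}^{\alpha-1}$ because the distinguished index $0$ is fixed.
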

\begin{proof} The same method used for deriving the formula (\ref{x1}) leads to a similar formula for the Dunkl kernel $E_{\kappa}\left(|z|e^{i\frac{q\pi}{k}}, w\right)$. With this formula, we have
\begin{eqnarray*}
&&V_{\kappa}\left( p\left( \cdot \right)\right)\left(|z|e^{i\frac{q\pi}{k}}\right)\\
&=&\left[p(w), E_{\kappa}\left(|z|e^{i\frac{q\pi}{k}}, w\right)\right]_{0}\\&=&
c_{\alpha, k}\int_{T^{k-1}}\left[p(w), e^{\left\langle w,  |z| \sum_{j=0}^{k-1}e^{\frac{i(q-2j)\pi}{k}t_{j}}\right\rangle}\right]_{0} t_{0}^{\alpha}\prod_{j=1}^{k-1}t_{j}^{\alpha-1}dt_{1}\ldots dt_{k-1}\\
&=&c_{\alpha, k}\int_{T^{k-1}}p\left(  |z| \sum_{j=0}^{k-1}e^{\frac{i(q-2j)\pi}{k}}t_{j}\right) t_{0}^{\alpha}\prod_{j=1}^{k-1}t_{j}^{\alpha-1}dt_{1}\ldots dt_{k-1}.
\end{eqnarray*}
\end{proof}
In particular, Corollary \ref{hl1} offers a way to compute the intertwining action on polynomial $p(z)$ when $V_{\kappa}(p(\cdot))(z)$ is radial.

At the end of this section, we show that the same method leads to a partial and simple formula for the dihedral group $I_{2k}$ with $\kappa=(\alpha, \beta)$ without other difficulties. In this case, the Weyl fractional integral vanishes as well.

 \begin{ex} Consider $w_{p}=e^{i\frac{p\pi}{k}}$ and $z=|z|e^{i\frac{(q+1/2)\pi}{k}}$, $p, q=0, 1, 2, \ldots, 2k-1$. Then we have
\[\xi_{u,v}((q+1/2)\pi, p\pi)=v\cos((q+1/2)\pi)\cos(p\pi)+u\sin((q+1/2)\pi)\sin(p\pi)=0.\]In this case, the Dunkl kernel at the line $z=|z|e^{i\frac{(q+1/2)\pi}{k}}$  and $w=w_{p}$ is only given by an integral over the simplex,
\begin{eqnarray*}
&&V_{\kappa}\left(e^{\langle \cdot, w_{p}\rangle}\right)(z)=E_{\kappa} \left(z, w_{p}\right)\\&=&\frac{\Gamma(k(\alpha+\beta)+1)}{\Gamma(\alpha+\beta)^{k}}\int_{T^{k-1}}e^{\langle e^{ip\pi/k},  z \sum_{j=0}^{k-1}e^{-i2j\pi/k}t_{j}\rangle} t_{0}^{\alpha}\prod_{j=1}^{k-1}t_{j}^{\alpha-1}dt_{1}\ldots dt_{k-1}.\nonumber
\end{eqnarray*}
The same discussion as in the above shows that  for functions $f(\langle e^{i \frac{p\pi}{k}}, z\rangle  )$, the intertwining operator at the line $re^{i\frac{(q+1/2)\pi}{k}}$ is given by
\begin{eqnarray*}
&&V_{\kappa}\left( f\left(\left\langle  \cdot, e^{i \frac{p\pi}{k}}\right\rangle  \right)\right)\left(|z|e^{i\frac{(q+1/2)\pi}{k}}\right)\\&=&\frac{\Gamma(k(\alpha+\beta)+1)}{\Gamma(\alpha+\beta)^{k}}\int_{T^{k-1}}f\left(\left\langle e^{i\frac{p\pi}{k}}, |z|e^{i\frac{(q+1/2)\pi}{k}} \sum_{j=0}^{k-1}e^{-i2j\pi/k}t_{j}\right\rangle\right) \\&&\times t_{0}^{\alpha}\prod_{j=1}^{k-1}t_{j}^{\alpha-1}dt_{1}\ldots dt_{k-1},
\end{eqnarray*}
where $t_{0}=1-\sum_{j=1}^{k-1}t_{j}$.
\end{ex}
\begin{remark} The same method together with the integral expression of the generalized Bessel function leads to an integral expression of the intertwining operator for the $I_{k}$ invariant polynomials.
\end{remark}

\begin{remark} A similar approach was used to derive the intertwining operator for a special class of functions for symmetric groups, where again  the Humbert functions appear,   see \cite{dl}.
\end{remark}


\section{Conclusions}
In this paper, an integral expression of the intertwining operator and it inverse is given for arbitrary reflection groups which is based on the classical Fourier transform and the Dunkl kernel. For the dihedral case, explicit expressions for the generalized Bessel function and Dunkl kernel are obtained by inverting the Laplace domain result of our previous paper \cite{CDL} using the second class of Humbert functions. With these explicit formulas, we obtain several integral expressions for the intertwining operators in the symmetric and the non-symmetric settings.  The positivity  and the bound of the Dunkl kernel can be observed directly from our integral expressions.

\setcounter{equation}{0}


\section*{Acknowledgements}
This work was supported by the Research Foundation Flanders (FWO) under Grant EOS 30889451.


\section*{List of notations}
For the reader's convenience, we list the notations used in  Section 4 below.\\

\begin{center}
\begin{tabular}{|c||c|}
\hline
 $z$& $|z|e^{i\phi_{1}}$\\
 \hline
$w$& $|w|e^{i\phi_{2}}$\\
 \hline
$\langle z, w\rangle$ & ${\rm Re} (z\overline{w})$\\
 \hline
$d\nu^{\alpha}(u)$ &  $\displaystyle{\frac{\Gamma(\alpha+1/2)}{\sqrt{\pi}\Gamma(\alpha)}(1-u^{2})^{\alpha-1}du}$  \\
 \hline
 $d\mu^{\alpha}(u)$ & $\displaystyle{\frac{\Gamma(\alpha+1/2)}{\sqrt{\pi}\Gamma(\alpha)}(1+u)(1-u^{2})^{\alpha-1}du}$  \\
 \hline
$\xi_{u,v}(\phi_{1}, \phi_{2})$ & $v\cos(\phi_{1})\cos(\phi_{2})+u\sin(\phi_{1})\sin(\phi_{2})$ \\
 \hline
$q_{u,v}(\phi_{1}, \phi_{2})$ & $\arccos(\xi_{u,v}(\phi_{1}, \phi_{2}))$ \\
 \hline
  $a_{j}^{+} $ & $|zw|\cos\biggl(\frac{q_{u,v}(k\phi_{1}, k\phi_{2})-2j\pi}{k}\biggr)$, $0\le j\le k-1$\\
 \hline
 $a_{j}^{-}$ & $|zw|\cos\biggl(\frac{\pi-q_{u,v}(k\phi_{1}, k\phi_{2})-2j\pi}{k}\biggr), 0\le j\le k-1$\\
 \hline
 $a_{k}$ & ${\rm Re}(z \overline{w})$\\
 \hline
   $a_{j}$ & $|zw|\cos\biggl(\frac{q_{u,v}(k\phi_{1}, k\phi_{2})+2j\pi}{k}\biggr), 0\le j \le k-1$ \\
 \hline
\end{tabular}
\end{center}

\end{document}